\Crefname{equation}{}{}
\title{\vspace{-0.7in}Deformation quantization generates all multiple zeta values}
\date{September 27, 2024} 
\author{Kelvin Ritland\footnote{McGill University, \href{mailto:kelvin.ritland@mail.mcgill.ca}{kelvin.ritland@mail.mcgill.ca}}}
\newcommand{\conjspace}[1]{\overline{#1}}
\DeclareRobustCommand{\lhp}{\conjspace\uhp}
\DeclareMathOperator{\sgn}{sgn}
\DeclareMathOperator*{\Res}{Res}
\DeclareMathOperator*{\NRes}{NRes}
\DeclareRobustCommand{\conj}{\overline}
\DeclareRobustCommand{\closure}{\underline}
\DeclareRobustCommand{\hyperlog}[3]{L\left(#1\middle|#3\middle|#2\right)} 
\DeclareRobustCommand{\disk}{\mathfrak{D}}
\DeclareRobustCommand{\conf}{\mathfrak{C}}
\DeclareRobustCommand{\rsphere}{\mathfrak{Z}}
\DeclareRobustCommand{\univreals}{\mathfrak{R}}
\DeclareRobustCommand{\Conf}{\mathsf{Conf}}
\DeclareRobustCommand{\nauts}{\caln}
\DeclareRobustCommand{\mzvs}{\calz}
\DeclareRobustCommand{\mzvswithhalf}{\widetilde\calz}
\DeclareRobustCommand{\arnolds}{\cala}
\DeclareRobustCommand{\kontgraphs}{\calk}
\DeclareRobustCommand{\polywords}{\calw}
\DeclareRobustCommand{\allstrings}{\cals}
\DeclareRobustCommand{\nautstrings}{\cali}
\DeclareRobustCommand{\syntaxtrees}{\calt}
\DeclareRobustCommand{\appendring}{\calf}
\DeclareRobustCommand{\prepend}{\mu}
\DeclareRobustCommand{\append}{\nu}
\DeclareRobustCommand{\graphform}{\alpha}
\DeclareRobustCommand{\graphof}{G}
\DeclareRobustCommand{\stringof}{I}
\DeclareRobustCommand{\coeffof}{c}
\begin{document}
\maketitle

\begin{abstract}
   Banks--Panzer--Pym have shown that the volume integrals appearing in Kontsevich's deformation quantization formula always evaluate to integer-linear combinations of multiple zeta values (MZVs).
   We prove a sort of converse, which they conjectured in their work, namely that with the logarithmic propagator: (1) the coefficients associated to the graphs appearing at order $\hbar^n$ in the quantization formula generate the $\qs$-vector space of weight-$n$ MZVs, and (2) the set of all coefficients generates the $\ints$-module of MZVs.
   In order to prove this result, we develop a new technique for integrating Kontsevich graphs using polylogarithms and apply it to an infinite subset of Kontsevich graphs.
   Then, using the binary string representation of MZVs and the Lyndon word decomposition of binary strings,
   we show that this subset of graphs generates all MZVs.
\end{abstract}

\section{Overview}

In a landmark result, Kontsevich showed in 1997 that the process of \emph{deformation quantization} is possible for any Poisson manifold \cite{kontsevich_2003}.
The quantization of a Poisson manifold $M$ is a deformation of the associative multiplication on $C^\infty(M)$ to a \emph{star product} given by a formal power series in a deformation parameter $\hbar$.  The terms in this series are indexed by certain directed graphs $\Gamma$, which we call \emph{Kontsevich graphs} in this paper.

The coefficient assigned to each graph $\Gamma$ has the form $c_\Gamma = \int_{\conf_{n,m}}\graphform_\Gamma$ where $\graphform_\Gamma$ is a volume form determined by the graph and $\conf_{n,m}$ is a moduli space of marked holomorphic disks.
To construct the volume form $\graphform_\Gamma$, we associate a differential form known as the \emph{propagator} to each edge of the graph and take the wedge product of the forms over all edges. Different choices of propagator are possible.  The one that 
Kontsevich originally specified is nowadays called the \emph{harmonic propagator}, and later work by Alekseev--Rossi--Torossian--Willwacher \cite{alekseev2016logarithms} rigorously established the \emph{logarithmic propagator} (first stated by Kontsevich in \cite{kont_operadsandmotives}).
In addition, Rossi--Willwacher described an infinite family of propagators interpolating between the harmonic and logarithmic propagator \cite{rossi2014interpolating}.

Subsequent work by Banks--Panzer--Pym constructed an explicit algorithm for computing $c_\Gamma$ for any choice of these propagators \cite{Banks_2020},
and showed that $c_\Gamma$ is, up to a normalization factor, an integer-linear combination of
\emph{multiple zeta values} (MZVs)
\begin{align*}
    \zeta(n_1,\dots,n_d) = \sum_{0<k_1<\dots<k_d} \frac{1}{k_1^{n_1}\cdots k_d^{n_d}} \in \reals.
\end{align*}
where $n_1,\dots,n_d$ are positive integers with $n_d \geq 2$.
The sum $\sum_{i}n_i$ is the \emph{weight} of the MZV and $d$ is the $\emph{depth}$ of the MZV.
Let $\mzvs$ be the subring of $\comps$ generated by $\emph{normalized MZVs}$:
\begin{align*}
    \mzvs = \left\langle\frac{\zeta(n_1,\dots,n_d)}{(2\pi i)^{n_1+\dots+n_d}} \;\middle|\; n_i \geq 1, n_d \geq 2 \right\rangle  \subset \comps.
\end{align*}
Let $\mzvs^0 = \ints$ and $\mzvs^1 = \{0\}$, and for other $n\in\nats$ let $\mzvs^n\subset\mzvs$ be the $\ints$-module of weight-$n$ normalized MZVs.
Define a filtered subring $\mzvswithhalf$ of $\comps$ by
\begin{align*}
    \mzvswithhalf^n &= \mzvs^n + \frac{1}{2}\mzvs^{n-1} &
    \mzvswithhalf &= \bigunion_{n\geq 0}\mzvs^n.
\end{align*}
Then, more precisely, Banks--Panzer--Pym proved that a graph $\Gamma$ that appears at order $\hbar^n$ in the star product has $c_\Gamma \in \mzvswithhalf^n$. They also conjectured a converse:
\begin{conjecture*}[\cite{Banks_2020} Conjecture 1.4]
The integrals appearing at order $\hbar^n$ in the logarithmic star product generate $\mzvswithhalf^n$ as a $\ints$-module.
\end{conjecture*}
The main result of this paper is that the conjecture is true if we use coefficients in $\qs$ or drop the weight restriction:
\begin{theorem}
\label{thm:intro_generate_mzvswithhalf_tensor_q}
    The integrals at order $\hbar^n$ in the logarithmic star product generate $\mzvswithhalf^n\tensor\qs$ as a $\qs$-vector space.
\end{theorem}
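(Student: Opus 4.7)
The plan is to exhibit an explicit infinite family of Kontsevich graphs whose logarithmic coefficients $c_\Gamma$ at order $\hbar^n$ span $\mzvswithhalf^n\tensor\qs$ as a $\qs$-vector space. Since Banks--Panzer--Pym have already shown $c_\Gamma\in\mzvswithhalf^n$ for every such graph, the problem is purely one of surjectivity. The argument factors into three parts: an analytic reduction of each $c_\Gamma$ to an iterated integral of polylogarithmic one-forms; a combinatorial identification of the resulting iterated integral with an MZV via Kontsevich's binary-string representation; and an algebraic argument, using Lyndon words and shuffle relations, that the subset of MZVs so produced generates all of $\mzvs^n\tensor\qs$.

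First, I would isolate a combinatorially simple family of graphs --- most naturally a ``caterpillar'' built by iterating an elementary two-vertex block --- whose aerial vertices can be decorated by an arbitrary binary word $w\in\{0,1\}^n$. Under the logarithmic propagator each edge contributes a one-form of type $d\log(z_i-z_j)$ or $d\log(z_i-\conj{z_j})$, and the linear structure of the graph lets me integrate the aerial positions off one at a time. Standard fiber integration converts the Kontsevich volume form into an iterated integral on $[0,1]$ of the two polylogarithmic one-forms $dz/z$ and $dz/(1-z)$, with their order dictated by $w$. This is precisely Kontsevich's iterated-integral representation of the MZV associated to $w$.

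Second, I would invoke the classical dictionary $w = 10^{n_1-1}\cdots 10^{n_d-1}\mapsto\zeta(n_1,\dots,n_d)$ between admissible binary words and MZVs; after normalizing by $(2\pi i)^n$ this identifies $\mzvs^n\tensor\qs$ with the $\qs$-span of admissible weight-$n$ words. Since the caterpillar family directly produces every admissible word, the image of $\Gamma\mapsto c_\Gamma$ contains every such generator. Finally, I appeal to Radford's theorem: the shuffle algebra of binary strings is freely generated by Lyndon words, so every weight-$n$ admissible word is a $\qs$-linear combination of shuffle products of Lyndon words of total weight $n$. Because the shuffle product of MZVs is again an MZV, each such product is itself in the $\qs$-span of my graph integrals, up to products and lower-weight correction terms that an induction on $n$ absorbs --- the extra slot $\tfrac{1}{2}\mzvs^{n-1}$ in $\mzvswithhalf^n$ disappears automatically over $\qs$.

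I expect the principal technical obstacle to be the first step: controlling the boundary behavior in the Kontsevich compactification of $\conf_{n,m}$ when aerial vertices collide, and verifying that fiber integration of the logarithmic volume form yields \emph{exactly} the iterated integral prescribed by $w$, not merely the correct iterated integral modulo lower-weight corrections. The combinatorial input (the binary-string dictionary, Lyndon words, shuffle relations) is classical; the novelty, as the abstract signals, lies in engineering graphs whose logarithmic coefficients land cleanly on prescribed polylogarithmic iterated integrals of $dz/z$ and $dz/(1-z)$.
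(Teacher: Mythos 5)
There is a genuine gap at the centre of your argument. You assert that a single ``caterpillar'' family of graphs can be decorated by an \emph{arbitrary} binary word $w\in\{0,1\}^n$ so that $c_\Gamma$ is exactly the iterated integral prescribed by $w$, and hence that ``the caterpillar family directly produces every admissible word.'' No such family is known, and the paper's construction does not provide one: the graphs built there from the single-rung ladder realize only those elements of $\allstrings$ obtainable from the string $01$ by the operations $\prepend(s)=0\cdot s$, $\append(s)=s\cdot 1$, and the shuffle product $\shuffle$ (\Cref{lem:can_append}, \Cref{lem:can_shuffle}). This set of single words is genuinely smaller than all admissible words --- for instance $0101$ is not of this form, and the paper must write it as $\tfrac12(01\shuffle 01)-2(0011)$, with a rational coefficient. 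This is precisely why the Lyndon-word machinery is needed at all: \Cref{prop:full_lyndon_decomposition} decomposes an arbitrary word into a $\qs$-linear combination of shuffles of Lyndon words, and the key combinatorial input (\Cref{lem:lyndons_startwith00}) is that every Lyndon word in $\allstrings$ of length $\geq 3$ begins with $00$ or ends with $11$, hence is $\prepend$ or $\append$ of a shorter word in $\allstrings$, enabling induction on weight. In your write-up the Radford/Lyndon step is redundant given your surjectivity claim, and once that claim is withdrawn the Lyndon step does not repair the argument, because you have not explained how to realize the Lyndon words themselves as graph coefficients. If one could realize every admissible word by a single graph, the much harder $\ints$-module version of the Banks--Panzer--Pym conjecture would follow immediately, which the paper's final section shows fails for its graph class already at weight $5$.

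A second, smaller but still real error: the summand $\tfrac12\mzvs^{n-1}$ in $\mzvswithhalf^n$ does not ``disappear automatically over $\qs$.'' After tensoring with $\qs$ one still has $\mzvswithhalf^n\tensor\qs=\mzvs^n\tensor\qs+\mzvs^{n-1}\tensor\qs$, and weight-$(n-1)$ MZVs are not (even conjecturally) contained in the span of weight-$n$ ones --- e.g.\ $\zeta(3)/(2\pi i)^3$ lies in no $\mzvs^4\tensor\qs$. You must produce graphs at order $\hbar^n$ whose coefficients realize weight-$(n-1)$ MZVs; the paper does this by joining a weight-$(n-1)$ graph with the single wedge, whose coefficient is $-\tfrac12$ (\Cref{ex:single_wedge}). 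Your overall architecture --- fibre integration to iterated integrals, the binary-string dictionary, shuffle relations --- matches the paper's, but the two load-bearing steps (which strings are actually realized, and how the half-integer summand is generated) are the ones your proposal gets wrong.
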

\begin{theorem}
\label{thm:intro_generate_mzvswithhalf}
    The integrals at all orders in the logarithmic star product generate $\mzvswithhalf$ as a $\ints$-module.
\end{theorem}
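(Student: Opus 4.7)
The plan is to derive Theorem \ref{thm:intro_generate_mzvswithhalf} from a more arithmetic-sensitive refinement of the construction underlying Theorem \ref{thm:intro_generate_mzvswithhalf_tensor_q}. Recall that the proof of Theorem \ref{thm:intro_generate_mzvswithhalf_tensor_q} associates to each binary string an explicit Kontsevich graph whose coefficient, evaluated via the new polylogarithm integration technique, contributes to the corresponding weight-$n$ MZV; Theorem \ref{thm:intro_generate_mzvswithhalf_tensor_q} uses only that these coefficients $\qs$-span $\mzvswithhalf^n$. For Theorem \ref{thm:intro_generate_mzvswithhalf} I would keep explicit track of the integer arithmetic through the polylogarithm reductions and Lyndon word decomposition.

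The key step is an induction on weight using the triangularity of the Lyndon decomposition. Lyndon words on the binary alphabet form a canonical $\ints$-basis of the shuffle algebra, which under the standard binary-string representation of MZVs yields a natural $\ints$-generating set for $\mzvs^\bullet$. I expect the polylogarithm integration, applied to the graph associated to a Lyndon word $\ell$, to produce a coefficient equal to the corresponding Lyndon generator $g_\ell$ plus an integer combination of generators associated to strictly earlier (in the Lyndon order) words. Inductively peeling off leading terms then places each standard generator of $\mzvs^n$ in the $\ints$-span of graph coefficients. The $\frac{1}{2}\mzvs^{n-1}$ piece of $\mzvswithhalf^n$, relevant only at odd weights, is reached either by an explicit low-order graph whose integral equals $\frac{1}{2}$ (from a codimension-one boundary stratum of $\conf_{n,m}$) together with the multiplicative structure on coefficients coming from disjoint unions of graphs, or by a direct polylogarithm calculation tailored to produce the half-integer generators.

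The main obstacle is the careful bookkeeping of denominators through the polylogarithm reductions: shuffle relations naturally introduce rational coefficients, so the Lyndon $\ints$-basis of the shuffle algebra does not immediately transfer to a $\ints$-basis of the resulting MZV combinations. The crucial new flexibility, relative to Theorem \ref{thm:intro_generate_mzvswithhalf_tensor_q}, is that Kontsevich graphs at \emph{all} orders of $\hbar$ are now allowed. This should provide enough room—either via the multiplicativity of coefficients under disjoint union, or by combining leading and correction terms from graphs of several different orders—to cancel any spurious denominators that arise from the polylogarithm reductions at a single fixed order.
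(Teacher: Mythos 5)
Your proposal correctly identifies that the weight-graded, fixed-order approach cannot work over $\ints$ and that graphs of all orders must be combined, but the mechanism you offer for killing denominators --- ``combining leading and correction terms from graphs of several different orders should provide enough room to cancel any spurious denominators'' --- is precisely the missing step, and it is not a bookkeeping issue that resolves itself. Two concrete problems. First, the integral Lyndon-triangularity you hope for at a fixed weight is false for this class of graphs: the Lyndon decomposition \Cref{eqn:shuffle_of_lyndon_decomp} carries genuine denominators $1/(k_1!\cdots k_n!)$ (e.g.\ $0101=\tfrac12(01\shuffle 01)-2\cdot 0011$), and the paper's final section verifies that $\coeffof(\graphof(\syntaxtrees^5))\neq\mzvs^5$ as $\ints$-modules, so no amount of careful tracking of integer arithmetic through the weight-$5$ reductions can succeed. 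Second, even granting (as the paper does) that $\coeffof(\kontgraphs)$ is a subring of $\mzvswithhalf$ which $\qs$-spans it, one still has to exhibit $\tfrac1p\in\coeffof(\kontgraphs)$ for every prime $p$; products of coefficients under the join only generate the subring they already lie in, so some arithmetic input is required to manufacture new prime denominators.

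The paper's proof supplies exactly that input. It reduces the theorem to showing $\qs\subset\coeffof(\kontgraphs)$, gets $\tfrac12$ from the single wedge (a weight-one graph, not a boundary stratum), and then for odd primes $p$ uses the graphs $\graphof(\prepend^{2k-2}(e))$ with coefficient $-\zeta(2k)/(2\pi i)^{2k}$, whose integer multiple $2(2k)!\,c_{\Gamma_k}$ is the Bernoulli number $B_{2k}$; the Von Staudt--Clausen theorem $B_{p-1}+\sum_{(q-1)\mid(p-1)}\tfrac1q\in\ints$ then yields $\tfrac1p$ by induction on $p$. Without this (or some equivalent number-theoretic identity) your argument does not close: the claim that the denominators ``should'' cancel is the entire content of the theorem beyond \Cref{thm:intro_generate_mzvswithhalf_tensor_q}.
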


The proof is constructive and yields an effective algorithm for writing any MZV as a $\qs$-linear combination of graphs.
The proof also develops a new method to integrate various Kontsevich graphs, which may be of independent interest.

\subsection{Overview of the proof}

We consider the class of Kontsevich graphs obtained by starting from the base graph
\[
e = 
\begin{tikzpicture}[baseline=(current bounding box.center)]
\begin{scope}[decoration={
    markings,
    mark=at position 1 with {\arrow{>}}}
    ] 
    \node (A) at (0,0) {$\bullet$};
    \node (B) at (1,0) {$\bullet$};
    \node (C) at (0,1) {$\bullet$};
    \node (D) at (1,1) {$\bullet$};
    \draw[postaction=decorate] (C) to[bend left] (D);
    \draw[postaction=decorate] (D) to[bend left] (C);
    \draw[postaction=decorate] (C) to (A);
    \draw[postaction=decorate] (D) to (B);
\end{scope}
\end{tikzpicture}
\]
and applying some sequence of the operations
\[
\prepend\left(
\begin{tikzpicture}[baseline=(current bounding box.center)]
\begin{scope}[decoration={
    markings,
    mark=at position 1 with {\arrow{>}}}
    ] 
    \node (A) at (0,0) {$\bullet$};
    \node (B) at (1,0) {$\bullet$};
    \draw[postaction=decorate,dashed] (0,1) to (A);
    \draw[postaction=decorate,dashed] (1,1) to (B);
\end{scope}
\end{tikzpicture}
\right) = 
\begin{tikzpicture}[baseline=(current bounding box.center)]
\begin{scope}[decoration={
    markings,
    mark=at position 1 with {\arrow{>}}}
    ] 
    \node (A) at (0,-1) {$\bullet$};
    \node (B) at (1,0) {$\bullet$};
    \node (C) at (1,-1) {$\bullet$};
    \draw[postaction=decorate,dashed] (0,0.66) to (A);
    \draw[postaction=decorate,dashed] (1,0.66) to (B);
    \draw[postaction=decorate] (B) to (A);
    \draw[postaction=decorate] (B) to (C);
\end{scope}
\end{tikzpicture}
\quad\quad\quad\quad
\append\left(
\begin{tikzpicture}[baseline=(current bounding box.center)]
\begin{scope}[decoration={
    markings,
    mark=at position 1 with {\arrow{>}}}
    ] 
    \node (A) at (0,0) {$\bullet$};
    \node (B) at (1,0) {$\bullet$};
    \draw[postaction=decorate,dashed] (0,1) to (A);
    \draw[postaction=decorate,dashed] (1,1) to (B);
\end{scope}
\end{tikzpicture}
\right) = 
\begin{tikzpicture}[baseline=(current bounding box.center)]
\begin{scope}[decoration={
    markings,
    mark=at position 1 with {\arrow{>}}}
    ] 
    \node (A) at (0,0) {$\bullet$};
    \node (B) at (1,-1) {$\bullet$};
    \node (C) at (0,-1) {$\bullet$};
    \draw[postaction=decorate,dashed] (0,0.66) to (A);
    \draw[postaction=decorate,dashed] (1,0.66) to (B);
    \draw[postaction=decorate] (A) to (B);
    \draw[postaction=decorate] (A) to (C);
\end{scope}
\end{tikzpicture}
\]
\[
\left(
\begin{tikzpicture}[baseline=(current bounding box.center)]
\begin{scope}[decoration={
    markings,
    mark=at position 1 with {\arrow{>}}}
    ] 
    \node (A) at (0,0) {$\bullet$};
    \node (B) at (1,0) {$\bullet$};
    \draw[postaction=decorate,dashed] (0,1) to (A);
    \draw[postaction=decorate,dashed] (1,1) to (B);
\end{scope}
\end{tikzpicture}
\right)
*
\left(
\begin{tikzpicture}[baseline=(current bounding box.center)]
\begin{scope}[decoration={
    markings,
    mark=at position 1 with {\arrow{>}}}
    ] 
    \node (A) at (0,0) {$\bullet$};
    \node (B) at (1,0) {$\bullet$};
    \draw[postaction=decorate,dashed] (0,1) to (A);
    \draw[postaction=decorate,dashed] (1,1) to (B);
\end{scope}
\end{tikzpicture}
\right)
=
\begin{tikzpicture}[baseline=(current bounding box.center)]
\begin{scope}[decoration={
    markings,
    mark=at position 1 with {\arrow{>}}}
    ] 
    \node (A) at (0,0) {$\bullet$};
    \node (B) at (1,0) {$\bullet$};
    \draw[postaction=decorate,dashed] (-1,1) to (A);
    \draw[postaction=decorate,dashed] (0,1) to (B);
    \draw[postaction=decorate,dashed] (1,1) to (A);
    \draw[postaction=decorate,dashed] (2,1) to (B);
\end{scope}
\end{tikzpicture}
\]
where the bottom nodes indicate the external nodes of Kontsevich graphs.
For example, the graph $\prepend(e) * e$ is:
\[
\prepend(e) * e = \prepend\left(
\begin{tikzpicture}[baseline=(current bounding box.center)]
\begin{scope}[decoration={
    markings,
    mark=at position 1 with {\arrow{>}}}
    ] 
    \node (A) at (0,0) {$\bullet$};
    \node (B) at (1,0) {$\bullet$};
    \node (C) at (0,1) {$\bullet$};
    \node (D) at (1,1) {$\bullet$};
    \draw[postaction=decorate] (C) to[bend left] (D);
    \draw[postaction=decorate] (D) to[bend left] (C);
    \draw[postaction=decorate] (C) to (A);
    \draw[postaction=decorate] (D) to (B);
\end{scope}
\end{tikzpicture}
\right)
*
\left(
\begin{tikzpicture}[baseline=(current bounding box.center)]
\begin{scope}[decoration={
    markings,
    mark=at position 1 with {\arrow{>}}}
    ] 
    \node (A) at (0,0) {$\bullet$};
    \node (B) at (1,0) {$\bullet$};
    \node (C) at (0,1) {$\bullet$};
    \node (D) at (1,1) {$\bullet$};
    \draw[postaction=decorate] (C) to[bend left] (D);
    \draw[postaction=decorate] (D) to[bend left] (C);
    \draw[postaction=decorate] (C) to (A);
    \draw[postaction=decorate] (D) to (B);
\end{scope}
\end{tikzpicture}
\right)
 = 
 \left(
\begin{tikzpicture}[baseline=(current bounding box.center)]
\begin{scope}[decoration={
    markings,
    mark=at position 1 with {\arrow{>}}}
    ] 
    \node (A) at (0,0) {$\bullet$};
    \node (B) at (1,0) {$\bullet$};
    \node (C) at (1,1) {$\bullet$};
    \node (D) at (0,2) {$\bullet$};
    \node (E) at (1,2) {$\bullet$};
    \draw[postaction=decorate] (D) to[bend left] (E);
    \draw[postaction=decorate] (E) to[bend left] (D);
    \draw[postaction=decorate] (E) to (C);
    \draw[postaction=decorate] (D) to (A);
    \draw[postaction=decorate] (C) to (A);
    \draw[postaction=decorate] (C) to (B);
\end{scope}
\end{tikzpicture}
\right)
*
\left(
\begin{tikzpicture}[baseline=(current bounding box.center)]
\begin{scope}[decoration={
    markings,
    mark=at position 1 with {\arrow{>}}}
    ] 
    \node (A) at (0,0) {$\bullet$};
    \node (B) at (1,0) {$\bullet$};
    \node (C) at (0,1) {$\bullet$};
    \node (D) at (1,1) {$\bullet$};
    \draw[postaction=decorate] (C) to[bend left] (D);
    \draw[postaction=decorate] (D) to[bend left] (C);
    \draw[postaction=decorate] (C) to (A);
    \draw[postaction=decorate] (D) to (B);
\end{scope}
\end{tikzpicture}
\right)
=
\begin{tikzpicture}[baseline=(current bounding box.center)]
\begin{scope}[decoration={
    markings,
    mark=at position 1 with {\arrow{>}}}
    ] 
    \node (A) at (1,0) {$\bullet$};
    \node (B) at (2,0) {$\bullet$};
    \node (C) at (1,1) {$\bullet$};
    \node (D) at (0,2) {$\bullet$};
    \node (E) at (1,2) {$\bullet$};
    \draw[postaction=decorate] (D) to[bend left] (E);
    \draw[postaction=decorate] (E) to[bend left] (D);
    \draw[postaction=decorate] (E) to (C);
    \draw[postaction=decorate] (D) to[bend right] (A);
    \draw[postaction=decorate] (C) to (A);
    \draw[postaction=decorate] (C) to (B);

    \node (H) at (2,1) {$\bullet$};
    \node (I) at (3,1) {$\bullet$};
    \draw[postaction=decorate] (H) to[bend left] (I);
    \draw[postaction=decorate] (I) to[bend left] (H);
    \draw[postaction=decorate] (H) to (A);
    \draw[postaction=decorate] (I) to (B);
\end{scope}
\end{tikzpicture}
\]

To integrate these graphs, we use the binary representation of normalized MZVs due to Le-Murakami \cite{mzvrepn1995} and Kontsevich.
For the sequence of integers $(n_1,\dots,n_d)$ defining $\zeta(n_1,\dots,n_d)$ we have that
\begin{align*}
    \frac{(-1)^d\zeta(n_1,\dots,n_d)}{(2\pi i)^{n_1 + \dots + n_d}} &= \int_0^1 \omega_0^{n_d-1}\omega_1\omega_0^{n_{d-1}-1}\omega_1\cdots \omega_1\omega_0^{n_1-1}\omega_1
\end{align*}
where this integral is Chen's iterated integral \cite{chen1977iterated} and $\omega_a(x) = \frac{1}{2\pi i}\frac{\dd x}{x - a}$. 
By considering the sequence of forms $\omega_0$ and $\omega_1$ that appear in the integrand we have a mapping from binary strings to normalized MZVs:
\begin{align*}
    0^{n_d-1}10^{n_{d-1}-1}1\cdots 10^{n_1-1}1 \mapsto (-1)^d\frac{\zeta(n_1,\dots,n_d)}{(2\pi i)^{n_1+\dots+n_d}}.
\end{align*}
For instance, 
\begin{align*}
    011 \mapsto \frac{\zeta(1,2)}{(2\pi i)^{3}} &= \int_0^1 \omega_0\omega_1\omega_1 = \frac{1}{(2\pi i)^3}\int_0^1 \frac{\dd x_1}{x_1} \int_0^{x_1} \frac{\dd x_2}{x_2-1} \int_0^{x_2} \frac{\dd x_3}{x_3-1}.
\end{align*}

Let $\stringof$ be the map that associates to one of the graphs above the formal linear combination of binary strings defined recursively by the following rules:
\begin{align*}
    \stringof(e) &= 01 \\
    \stringof(\prepend(\Gamma)) &= 0\cdot\stringof(\Gamma) \\
    \stringof(\append(\Gamma)) &= \stringof(\Gamma)\cdot 1 \\
    \stringof(\Gamma_1 * \Gamma_2) &= \stringof(\Gamma_1) \shuffle \stringof(\Gamma_2)
\end{align*}
where $\shuffle$ is the commutative shuffle product of binary strings and $\cdot$ is string concatenation.
We prove that for a graph $\Gamma$, the integral $c_\Gamma$ is equal to the linear combination of MZVs determined by $I(\Gamma)$, i.e., $I$ computes the weight of a graph in binary form.

It is difficult to prove this result using past work.
In particular, the general algorithm described by Banks--Panzer--Pym is highly computationally intensive: graphs with more than five nodes tend to result in thousands of terms to track when implemented on a computer, making it infeasible for proving a result like this. 
Our strategy builds on this approach but with some key simplifications to make it tractable to carry out by hand.

The basic objects used by Banks--Panzer--Pym are \emph{polylogarithms},
which are functions on the integration domain defined by
the iterated integral of certain differential forms over a smooth path $\gamma$.
They depend on the homotopy class of $\gamma$, which means they usually posses some monodromy.
Their algorithm then uses polylogarithms to carry out the integration one variable at a time.
In particular, given $\graphform_\Gamma$ depending on some variable of integration $x$,
they take a na\"ive (usually multivalued) primitive of $\graphform_\Gamma$ with respect to $x$ in terms of polylogarithms.
Then, they `correct' the primitive with a closed form that cancels out the monodromy to obtain a single-valued primitive, and then they apply Stokes' theorem to integrate out $x$.
This single integration step stays within the space of polylogarithms, and so it can be repeated until all variables are integrated out.

We improve on this algorithm through the identification of a class of single valued polylogarithms, which we call $\emph{nautical}$ polylogarithms.
We show that, for the class of graphs in this paper, we can always take a nautical primitive when integrating fibrewise,
and we thus avoid having to deal with the complexities of cancelling out monodromy present in the general algorithm.
The relative simplicity of the primitives then allow us to show that $I$ indeed computes $c_\Gamma$.
Outside of the class of graphs in this paper,
we have also been able to use these polylogarithms to integrate the Bernoulli graphs described in \cite{didier_bernoulligraphs,vinay_bernoulligraphs},
the wheel graphs described in \cite{merkulov10_wheelgraphs}, and arbitrary sized ladders (\Cref{ex:hgraph_single_rung}) by hand fairly easily.

Then, using the formula for $c_\Gamma$ given by $I$ we show our main result, that all MZVs are generated, as follows. 
First, note that the formula implies that the space of all possible binary strings we can obtain (and hence MZVs we may obtain) is closed under the operations $\prepend(s) := 0\cdot s$, $\append(s) := s \cdot 1$, and $\shuffle$.
We then show that any binary string corresponding to an MZV may be written as a $\qs$-linear combination of operations of $\prepend,\append$, and $\shuffle$ starting from the string 01,
which implies \Cref{thm:intro_generate_mzvswithhalf_tensor_q}.
For this proof, we use the Lyndon word polynomial basis for the shuffle algebra of binary strings to decompose a string into a $\qs$-linear combination of shuffle products of Lyndon words (here is where $\qs$ coefficients are necessary),
and then reduce the length of those Lyndon words inductively using the operations $\prepend$ and $\append$.
\begin{ex}
For the normalized MZV $\frac{-1}{(2\pi i)^5}\zeta(1,2,2)$ with binary representation $01011$ our binary string decomposition is:
\begin{align*}
    01011 = \append(0101) &= \append\left(\frac{1}{2}(01\shuffle 01) - 2(0011)\right) 
    = \frac{1}{2}\append(01 \shuffle 01) - 2\append(\append(\prepend(01)))
\end{align*}
so we have that $c_\Gamma = \frac{-1}{(2\pi i)^5}\zeta(1,2,2)$ for the linear combination of graphs
\[
\Gamma = \frac{1}{2}\append(e * e) - 2\append(\append(\prepend(e))) = 
\frac{1}{2}\left(
\begin{tikzpicture}[baseline=(current bounding box.center)]
\begin{scope}[decoration={
    markings,
    mark=at position 1 with {\arrow{>}}},
    scale=1.1
    ] 
    \node (A) at (0,0) {$\bullet$};
    \node (B) at (1,0) {$\bullet$};
    \node (C) at (0,1) {$\bullet$};
    \node (D) at (-1,2) {$\bullet$};
    \node (E) at (0,2) {$\bullet$};
    \node (F) at (1,2) {$\bullet$};
    \node (G) at (2,2) {$\bullet$};
    \draw[postaction=decorate] (D) to[bend left] (E);
    \draw[postaction=decorate] (E) to[bend left] (D);
    \draw[postaction=decorate] (F) to[bend left] (G);
    \draw[postaction=decorate] (G) to[bend left] (F);
    \draw[postaction=decorate] (D) to (C);
    \draw[postaction=decorate] (E) to (B);
    \draw[postaction=decorate] (F) to (C);
    \draw[postaction=decorate] (G) to (B);
    \draw[postaction=decorate] (C) to (A);
    \draw[postaction=decorate] (C) to (B);
\end{scope}
\end{tikzpicture}
\right)
- 2\left(
\begin{tikzpicture}[baseline=(current bounding box.center)]
\begin{scope}[decoration={
    markings,
    mark=at position 1 with {\arrow{>}}},
    scale=0.66*1.1
    ] 
    \node (A) at (0,0) {$\bullet$};
    \node (B) at (1,0) {$\bullet$};
    \node (C) at (0,1) {$\bullet$};
    \node (D) at (0,2) {$\bullet$};
    \node (E) at (1,2) {$\bullet$};
    \node (F) at (0,3) {$\bullet$};
    \node (G) at (1,3) {$\bullet$};
    \draw[postaction=decorate] (F) to[bend left] (G);
    \draw[postaction=decorate] (G) to[bend left] (F);
    \draw[postaction=decorate] (F) to (D);
    \draw[postaction=decorate] (G) to (E);
    \draw[postaction=decorate] (E) to (D);
    \draw[postaction=decorate] (E) to (B);
    \draw[postaction=decorate] (D) to (B);
    \draw[postaction=decorate] (D) to (C);
    \draw[postaction=decorate] (C) to (A);
    \draw[postaction=decorate] (C) to (B);
\end{scope}
\end{tikzpicture}
\right),
\]
as desired.
\end{ex}
\Cref{thm:intro_generate_mzvswithhalf} then follows by observing that $\qs$ is generated by $\ints$-linear combinations of all graphs, so the $\qs$-span and $\ints$-span of all graphs are the same.

In the last section of the paper we show (assuming the conjectured basis $\{\zeta(2,3), \zeta(3,2)\}$ for $\mzvs^5\tensor \qs$ from \cite{hoffman97}) that our class of graphs of weight 5 do not span $\mzvswithhalf^5$ as a $\ints$-module.
It would be interesting to investigate whether our result can be adapted to show that the coefficients appearing at weight $n$ span $\mzvswithhalf^n$ as an $\ints$-module, not just as a $\qs$-vector space.

\subsection*{Acknowledgements}
We thank Erik Panzer for his initial code and suggestions, and Brent Pym for his valuable guidance through this project. 
We acknowledge the support of the Natural Sciences and Engineering Research Council of Canada through a Canada Graduate Scholarship -- Master's (CGS-M), and through Discovery Grant RGPIN-2020-05191 of Pym.

\section{Polylogarithms and Kontsevich weights}

In this section, we recall some preliminaries following the notation established in Section 2 of \cite{Banks_2020}.
Most of the content in this section is not original to this paper; the main exception is \Cref{def:integration_notation},
where we introduce a polylogarithm in two variables depending on a Kontsevich graph $\Gamma$,
generalizing the usual integral defining the coefficient $c_\Gamma$.

\subsection{Coordinates and spaces}
Let $\uhp$ and $\lhp$ be the open upper and lower half planes of $\comps$, the bar here standing for complex conjugation.

Let $U$ and $R$ be finite sets of \emph{labels} with $R$ totally ordered and $2|U|+|R| \geq 2$.
Let $R_\infty = R\disunion\{\infty\}$ be $R$ with an additional label called $\infty$,
and $\conj U = \set{\conj u}{u \in U}$ be the set of \emph{conjugate labels} of $U$.
Let $S = U \disunion \conj U \disunion R$ and let $S_\infty = U \disunion \conj U \disunion R_\infty$. 

A \emph{configuration of $U$} is an embedding $U \hookrightarrow \uhp$ and a \emph{configuration of $R$} is an embedding $R \hookrightarrow \reals$ that respects the total ordering on $R$.
Explicitly, we can write all such configurations as
\begin{align*}
\Conf_U(\uhp) &= \set{ (z_u)_{u\in U}}{z_{u_1} \neq z_{u_2} \text{ if } u_1\neq u_2} \subset \uhp^U \\
\Conf_{R,+}(\reals) &= \set{ (z_r)_{r\in R}}{z_{r_1} < z_{r_2} \text{ if } r_1 < r_2} \subset \reals^R .
\end{align*}
For a given configuration $C_U \in \Conf_U(\uhp)$ we also consider $\conj U$ to be embedded in $\lhp$ via complex conjugation:
\begin{align*}
    \conj C_U := (\conj z_u)_{z_u \in C_U} \in \lhp^U
\end{align*}
The space of all such embeddings is isomorphic to $\Conf_U(\uhp)$.

A configuration $C_U \in \Conf_U(\uhp)$ and $C_R \in \Conf_{R,+}(\reals)$ induces an embedding of $S \hookrightarrow \comps$, given by $C_S = C_U \disunion C_R \disunion \conj C_U$.
These points in $\comps$ will be called \emph{marked points}.
Intuitively, each configuration $C_S$ consists of $|U|$ marked points in $\uhp$ labelled by $U$, together with $|R|$ marked points on $\reals$ labelled by $R$, and $|\conj U|$ marked points in $\lhp$ labelled by $\conj U$ so that the whole collection is preserved by complex conjugation.

Two configurations of $R$ and $U$ are \emph{equivalent} if they differ by a conformal transformation of $\uhp$ preserving $\infty$. 
\begin{define}
The \emph{moduli space of $U$ marked points in the interior and $R$ marked points on the boundary}
is the set of equivalence classes of configurations of labels.
It is explicitly given by
\begin{align*}
    \conf_{U,R} \cong (\Conf_U(\uhp) \cross \Conf_{R,+}(\reals)) / G
\end{align*}    
where $G$ is the group of conformal transformations of $\uhp$ that preserve $\infty$.
\end{define}
$\conf_{U,R}$ is a real analytic manifold of dimension $2|U| + |R| - 2$.

From here on we do not distinguish the labels in $U, \conj U, R$ from the corresponding marked points in $\comps$,
i.e., we will use the labels $u\in U, r\in R$, and $\conj u\in\conj U$ to indicate the marked points $z_u\in\uhp$, $z_r\in\reals$, and $\conj z_u \in \lhp$.
When the exact configuration is unspecified we also consider $u,r,\conj u$ as variables or coordinates on $\conf_{U,R}$, 
taking values in in $\uhp$, $\reals$, or $\lhp$, respectively.

We would also like to consider functions where a variable may be either in $U$ or $R$.
To define these functions uniformly, 
we define a space where some points are allowed to be in the interior or the boundary:
\begin{define}\label{def:closure_conf_space}
For a finite set of labels $T$ and subset $X \subset T$, we denote by 
\begin{align*}
    \closure\conf_{T}^X = \bigdisunion_{(U,R) \in \mathscr{T}}\conf_{U,R} 
\end{align*}
where the set $\mathscr{T}$ is the set of all possible partitions $T = U \disunion R$ with $R$ totally ordered, $2|U|+|R|\geq 2$, and $X \subseteq U$.
  We refer to each $\conf_{U,R}$ in the disjoint union as the $\emph{strata}$ of the space.  When $X = \emptyset$, we write
\[
    \closure\conf_{T} = \closure\conf_{T}^\emptyset.
    \]
\end{define}
Note that the topology of $\closure \conf_T^X$ is that of the disjoint union of the components $\conf_{U,R}$.
This set could alternatively be equipped with a topology as a manifold with corners, whose interior is the set $\conf_{T,\emptyset}$ of configurations where all points are in the upper half-plane, and whose boundary strata are the remaining components $\conf_{U,R}$, corresponding to configurations where the points of $T\setminus X$ collide with $\reals$ but not with any other points.  This second topology is useful for intuition but will not be used in this paper; rather all arguments will be performed on a stratum-by-stratum basis so that the disjoint union topology suffices. This approach allows us to avoid the subtle problem of checking that the polylogarithmic functions we are dealing with are continuous as we pass from one stratum to another.

\subsubsection{The universal disk}
$\conf_{U,R}$ carries a universal disk $\disk_{U,R} \to \conf_{U,R}$ whose fibre at a point $[C_{U,R}] \in \conf_{U,R}$ is isomorphic to the upper half plane punctured at the marked points $C_{U}$.
Concretely, $\disk_{U,R} = \conf_{U\disunion\{x\},R}$ where $x$ is an additional symbol,
with location of the corresponding marked point serving as a coordinate on each fibre.
Each fiber has a natural compactification given by the real oriented blowup of the closed disk at the marked points $U\disunion R$, as shown in \Cref{fig:disk_example}.
\begin{figure}
    \centering
    
\begin{tikzpicture}
\begin{scope}[decoration={
    markings,
    mark=at position 0.5 with {\arrow{>}}}
    ] 
\def\holerad{0.3}
\begin{scope}[shift={(0,0)}]
    \def\radius{1.5}
    \def\perimtheta{(4*asin(\holerad/(2*\radius)))} 
    \def\perimarc{(180 - \perimtheta/2)} 

    \def\holestart{(\holepos+\perimarc/2-180)}
    \def\holeend{(180-\perimarc/2+\holepos)}
    \draw[fill=lightgray] (0,0) circle (\radius);
    \foreach \holepos/\holelabel in {90/$\infty$,290/$r_2$,170/$r_1$,10/$r_3$}{
        \draw[fill=white,draw=none] ({\holepos:\radius}) circle (\holerad);
        \draw[fill=white, postaction={decorate}] ([shift=({\holestart}:\holerad)] {\holepos:\radius}) arc (\holestart:\holeend-360:\holerad);
        \node at ({\holepos:\radius}) {\holelabel};
    };
    
    \foreach \x/\y/\holelabel in {{0.2}/{0.4}/$u_1$,{-0.4}/{-0.4}/$u_2$}{
        \draw[fill=white, postaction={decorate}] ([shift=(270:\holerad)] \x,\y) arc (270:270-360:\holerad);
        \node at ({\x,\y}) {\holelabel};
    }

    \node at (0.5,-0.5) {$x^\cdot$};
\end{scope}
\end{scope}
\end{tikzpicture}
    \caption{The compactification of a fiber in $\disk_{U,R}$ when $R = \{r_1, r_2, r_3\}$ and $U = \{u_1,u_2\}$}
    \label{fig:disk_example}
\end{figure}
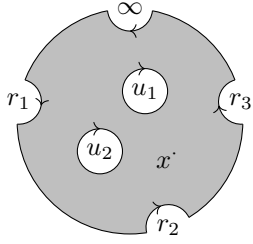

We will also consider the space $\conjspace\disk_{U,R}$, which is defined to be the complex conjugate space of $\disk_{U,R}$. We view the fibres of $\conjspace\disk_{U,R}$ as the lower half plane with punctures at $\conj U$
and with $\conj x$ as a (holomorphic) coordinate.

Just as $\comps\projs^1$ is the union of the upper half plane $\uhp$, the lower half plane $\lhp$ and the real line $\reals$, the universal punctured disk $\disk_{U,R}$ and its conjugate $\conj \disk_{U,R}$ can be glued along a common boundary corresponding to $\reals \setminus R$, to obtain a copy of the Riemann sphere punctured at $S_\infty$. 
This gives a universal family $\rsphere_{U,R} \to \conf_{U,R}$ of punctured genus zero curves equipped with a complex conjugation automorphism.
Let $\univreals_R \subseteq \rsphere_{U,R}$ be the shared boundary corresponding to $\reals \setminus R$.

Finally, we denote by $\closure\disk_T^X \to \closure\conf_T^X$ and $\closure\rsphere_T^X \to \closure\conf_T^X$ the analogous constructions over $\closure\conf_T^X$.
For instance, $\closure\disk_T^X$ is given by the disjoint union of the universal curves $\disk_{U,R}\to \conf_{U,R}$ for each stratum $\conf_{U,R}$ in $\closure\conf_T^X$.

\subsection{Differential forms and polylogarithms}

We denote by $\Omega^1(\conf_{U,R})$ the differential forms on $\conf_{U,R}$ and by $\Omega^1(\rsphere_{U,R})=\Omega^1(\rsphere_{U,R}/\conf_{U,R})$ the relative forms on the universal punctured sphere.
Then we denote by $\Omega^1(\closure\conf_T^X)$ and $\Omega^1(\closure\rsphere_{U,R})$ the direct sum of the respective forms on each stratum of $\closure\conf_T^X$.

Suppose that $x \in U$ and $a \in S = U \disunion \conj U \disunion R$.
Define differential forms $\omega_a(x), \conj\omega_a(x) \in \Omega^1(\rsphere_{U,R})$ by
\begin{align*}
    \omega_a(x) = \frac{1}{2\pi i}\dd\log(x - a) && \text{and} && \conj\omega_a(x) = \frac{1}{2\pi i}\dd\log(\conj x - a),
\end{align*}
where $x$ is the fiber coordinate on $\rsphere_{U,R}$, and let
\begin{align*}
    \arnolds^\bullet(\conf_{U,R}) \subseteq \Omega^\bullet(\conf_{U,R})
\end{align*}
be the subring of the de Rham complex generated by logarithmic derivatives of cross ratios (see \cite[Sec 2.4.2]{Banks_2020} for additional detail).
Note that we are conflating the coordinates indicating the locations of the marked points $z_x, z_{\conj x}, z_a$ with the formal labels $x, \conj x$ and $a \in S$ as discussed above.

We define $\arnolds^\bullet(\closure\conf_T^X)$ as the direct sum of each $\arnolds^\bullet(\conf_{U,R})$ for each stratum of $\closure\conf_T^X$.
Then, for $x \in X$, we have elements $\omega_a(x), \conj\omega_a(x) \in \Omega^1(\closure \conf_T^X)$ whose restriction to each stratum are as above.

\begin{define}
Suppose that $s_1,\dots,s_n \in S$ and let $\gamma : (0,1) \to \rsphere_{U,R}$ be a path lying entirely in a fibre, such that the limits
\begin{align*}
\gamma(0) := \lim_{t \to 0} \gamma(t) && \gamma(1) := \lim_{t \to 1} \gamma(t)
\end{align*}
are well defined and lie in $S_\infty$.  The \emph{iterated integral} in the sense of Chen \cite{chen1977iterated} is defined by
\begin{align*}
    \int_\gamma \omega_{s_1}\cdots\omega_{s_n} := \int_{0 \leq t_n\leq\dots\leq t_1 \leq 1}(\gamma^*\omega_{s_1})(t_1)\dots(\gamma^*\omega_{s_n})(t_n)
\end{align*}
\end{define}

By varying the point in the base $\conf_{U,R}$  and keeping the homotopy class of $\gamma$ in the fibres locally constant, this integral defines a (possibly multivalued) function $\conf_{U,R} \to \comps$ known as a \emph{polylogarithm}.
Regarded as function of just the endpoint $\gamma(1)$ (i.e.~as a function on the fibre) this integral is known as a \emph{hyperlogarithm}.

If we denote by $\gamma_x$ the subpath from $\gamma(0)$ to some point $x$ in the image of $\gamma$ then
$\int_\gamma \omega_{s_1}\cdots\omega_{s_n} = \int_\gamma\omega_{s_1}(x)\int_{\gamma_x} \omega_{s_2}\cdots\omega_{s_n}$, showing the iterated nature of these integrals.

\begin{define}
    Let $\polywords$ be the free ring generated by $S$, i.e. the set of finite $\ints$-linear combinations of words composed of letters of $S$.
    We will use $\cdot$ to denote multiplication (i.e. string concatenation) in $\polywords$ where needed.
\end{define}

As an alternate notation,
for a word $w = s_1\cdots s_n \in \polywords$ and an appropriate path $\gamma$, we will write
\begin{align*}
    \int_\gamma w := \int_\gamma s_1\cdots s_n := \int_\gamma \omega_{s_1}\cdots\omega_{s_n}.
\end{align*}
to indicate the polylogarithm.
Since iterated integrals are homotopy invariant,
when the homotopy class of $\gamma$ relative to its endpoints $\gamma(0)=a$ and $\gamma(1)=b$ is unambiguous 
for $w \in \polywords$, we will write
\begin{align*}
    \hyperlog{a}{b}{w} := \int_a^b w := \int_\gamma w
\end{align*}
for the polylogarithm as well.
\begin{define}
    Suppose that $p \in S$ and let $V \subseteq \comps \setminus S$ be an open set such that $p$ is in the closure of $V$.
    Then a real analytic function $f: V \to \comps$ has \emph{logarithmic singularities at $p$} if
    there exists a holomorphic coordinate $z$ centred
    at $p$ such that
    \begin{align*}
        f(z) = \sum_{j=0}^n f_j \cdot (\log z)^j + \sum_{j=0}^m g_j \cdot (\log \conj z)^j + \sum_{j=0}^k h_j \cdot (\log (z - \conj z))^j
    \end{align*}
    where $f_j, g_j, h_j$ are functions that are real analytic at $p$, and $\log$ is the principal branch of the logarithm.
\end{define}

Note that if $f$ is holomorphic then we need only the first term in this definition (i.e.~we may assume all $g_j$ and $h_j$ are equal to zero) and $f_j$ can be assumed to be holomorphic.
Polylogarithms have at worst logarithmic singularities as marked points collide with each other.

By \cite[Lemma 3.3.16]{panzer_phd}, the integral $\int_\gamma s_1\cdots s_n$ is convergent if and only if $\gamma(1) \neq s_1$ and $\gamma(0) \neq s_n$.
It is possible to treat divergences using regularized limits and tangential base points, but all polylogarithms in this work are convergent so we do not elaborate here.

\begin{define}
    The \emph{sheaf of convergent polylogarithms} on $\conf_{U,R}$ is the subsheaf of rings $\calv_{U,R} \subset C^{\infty}_{\conf_{U,R}}$ generated by polylogarithms $\int_\gamma \omega_{s_1}\dots\omega_{s_n}$
    where 
    (1) $s_1\cdots s_n \in \polywords$ is a word,
    (2) $\gamma$ is a path between marked points $a,b \in S_\infty$ such that $s_1 \neq b$ and $s_n \neq a$, and
    (3) aside from the endpoints $\gamma(0)=a$ and $\gamma(1)=b$, the image of $\gamma$ does not intersect $\{s_1,\dots,s_n\}$.

    The sheaf of convergent polylogarithms on $\closure\conf_T^X$, denoted $\closure\calv_T^X$, is the direct sum of $\calv_{U,R}$ for each stratum $\conf_{U,R}$ in $\closure\conf_T^X$.
\end{define}

Finally, we state some identities satisfied by iterated integrals.

\begin{enumerate}
    \item (Shuffle product) Suppose that $w, v \in \polywords$. Then
    \begin{align}
        \left(\int_\gamma w\right)\left(\int_\gamma v\right) = \int_{\gamma} w \shuffle v \label{eqn:shuffle_prod}
    \end{align}
    where $\shuffle$ is the commutative shuffle product in $\polywords$, i.e., $\int_\gamma : \polywords \to \calv_{U,R}$ is a ring homomorphism.
    
    \item (Path concatenation) Let $\gamma_1 * \gamma_2$ be the concatenation of two paths, first $\gamma_2$ then $\gamma_1$.
    Then
    \begin{align}
        \int_{\gamma_1*\gamma_2} s_1\dots s_n = \sum_{j=0}^k\left(\int_{\gamma_1} s_1\dots s_j\right)\left(\int_{\gamma_2} s_{j+1}\dots s_n\right) \label{eqn:path_concat}
    \end{align}

    \item (Path reversal) We have
    \begin{align}
        \int_{\gamma^{-1}} s_1\cdots s_n = (-1)^n\int_\gamma s_n \cdots s_1 \label{eqn:path_reverse}
    \end{align}
    where $\gamma^{-1}$ is the reversal of $\gamma$.
    Note the reversal of the symbols in the integrand.
\end{enumerate}

\begin{ex}\label{ex:calv_ab_is_mzvs}
    Note that $\conf_{\emptyset,\{A,B\}}$ is a single point.
    The sheaf $\calv_{\emptyset,\{A,B\}}$ is then simply the ring generated by polylogarithms $\int_A^B w$ where $w$ is a string composed of $A$ and $B$ that starts with $A$ and ends with $B$.
    Note the path reversal formula implies that the polylogarithms obtained by integrating over a path from $B$ to $A$ are also generated by this set.
    By identifying $A$ with 0 and $B$ with 1 by an appropriate conformal transformation, we obtain the integrals $\int_0^1 w$ where $w$ is a binary string starting with 0 and ending with 1,
    which is exactly the binary representation of normalized MZVs.
    We thus have $\calv_{\emptyset,\{A,B\}} = \mzvs$.
\end{ex}

\subsection{Kontsevich weight integration}

Here we recall the integrals used to define the weights $c_\Gamma$ with the logarithmic propagator, as described by Kontsevich/Alekseev–Rossi–Torossian–Willwacher \cite{kont_operadsandmotives,kontsevich_2003,alekseev2016logarithms}.

\begin{define}
For $n \ge 0$, a \emph{Kontsevich star product graph of weight $n$} is the data of a directed graph $\Gamma$ with $n+2$ nodes and $2n$ edges, that has
\begin{enumerate}
    \item $n$ nodes with outdegree two, called \emph{internal} nodes,
    \item two nodes with outdegree zero, called \emph{external} nodes,
\end{enumerate}
together with an ordering of its edges from 1 to $2n$.  
\end{define}

We will refer to these graphs simply as Kontsevich graphs in this paper; we do not consider the more general graphs with any number of external nodes used in his formality theorem.
We also will usually label the external nodes with symbols $A$ and $B$.

\begin{define}
    Let $\kontgraphs$ be  the free abelian group generated by isomorphism classes of Kontsevich graphs, modulo the relation $\Gamma = \sgn(\sigma)\Gamma'$ where $\Gamma'$ is any graph that is isomorphic to $\Gamma$ but with the edges reordered by applying a permutation $\sigma$.
    We equip $\kontgraphs$ with the grading $\kontgraphs = \bigoplus_{n =0}^\infty \kontgraphs^n$, where $\kontgraphs^n \subset \kontgraphs$ is the subgroup generated by  graphs of weight $n$.
\end{define}

\begin{define}
For $x,y \in U$, the  \emph{logarithmic propagator} from $x$ to $y$ is the one-form
\begin{align*}
    \alpha_{x\to y} = \frac{1}{2\pi i}\dd\log\left(\frac{x - y}{\conj x - y}\right) \in \Omega^1(\conf_{U,R}).
\end{align*}
\end{define}
Note that we are again considering $x,y$ to be variables,
by conflating the labels $x,y \in U$ with their corresponding marked points.

For a Kontsevich graph $\Gamma$,
and a choice of labeling of internal nodes by $U$ and external nodes by $R$,
let
\begin{align*}
    \graphform_\Gamma := \bigwedge_{i=1}^{2n}\alpha_{e_{i0} \to e_{i1}}
\end{align*}
where $e_{i0}$ and $e_{i1}$ are the endpoints of edge $e_i$ in $\Gamma$. 
Then the integral
\begin{align*}
    c_\Gamma = \int_{\conf_{U,R}}\graphform_\Gamma
\end{align*}
is the weight integral that appears in Kontsevich's star product formula.
It is known to be absolutely convergent \cite{alekseev2016logarithms},
and evaluates to a $\ints$-linear combination of normalized multiple zeta values in $\mzvswithhalf^n$ for a weight $n$ graph \cite{Banks_2020}.
Intuitively,
the integral over $\conf_{U,R}$ can be thought of as an integral over $\uhp$ with respect to all marked points $u\in U$.

Note that since even permutations of the edges leave the wedge product invariant, $c_\Gamma$ depends only on the class of $\Gamma$ in $\kontgraphs$, i.e.~we have a well defined homomorphism
\begin{align*}
    \coeffof : \kontgraphs &\to \comps \\
     [\Gamma] &\mapsto c_\Gamma
\end{align*}

\subsubsection{Partial integration}\label{sec:integration_notation}
The proof of our main result involves building graphs by appending certain subgraphs to each other, and saying what that operation does to the integral corresponding to the graph.
To express this induction we generalize the notion of integrating a Kontsevich graph,
where we also allow the external nodes of the graph to vary in the upper half plane
and we only integrate a subset of internal nodes, resulting in a function of the remaining nodes.
The induction then proceeds by integrating out subgraphs in the order they were appended.

We formalize this integration as follows.

\begin{define}\label{def:pushforward-notation}
Let $p : \closure\conf_{T\disunion X}^X \to \closure\conf_{T}$ be the map that forgets the points in $X$.
Then, for $\graphform\in \closure\calv_{T\disunion X}^X \tensor \arnolds^\bullet(\closure\conf_{T\disunion X}^X)$, let
\begin{align*}
    \int_{X} \graphform= p_*(\graphform)
\end{align*}
be the pushforward (fibrewise integral) along $p$.
\end{define}
We have $\int_{X} \graphform\in \closure\calv_{T} \tensor \arnolds^\bullet(\closure\conf_{T})$ by \cite[Theorem 4.1]{Banks_2020};
i.e.~it is a form on $\closure\conf_{T}$ built from logarithmic derivatives $\dd\log(-)$ and polylogarithms that are functions of the locations of the remaining marked points $T$.

\begin{define}\label{def:integration_notation}
Let $\Gamma$ be a Kontsevich graph with internal vertices labeled by $P$ and external vertices labeled by $Q$, and let $T = P\disunion Q$.
We denote by 
\begin{align*}
    \int\Gamma := \int_P \graphform_\Gamma
\end{align*}
the integral of all interior nodes of $\Gamma$, viewed as a function on $\closure\conf_{Q}$.
\end{define}
Note for the stratum $U = \emptyset$, $R = Q$ then this is usual integral defining $c_\Gamma$, and using the isomorphism $\calv_{\emptyset, \{A,B\}} = \mzvs$ (\Cref{ex:calv_ab_is_mzvs}) we express $c_\Gamma$ as an integer linear combination of normalized MZVs.
At the opposite extreme, if $U=Q$, $R = \emptyset$, we treat the external vertices of $\Gamma$ as points $A,B$ moving in the upper half plane and obtain a polylogarithm on $\conf_{R,\emptyset}$.

\begin{ex}
\label{ex:ladder_integration_notation_pt1}
Consider the Kontsevich graph constructed by appending a single rung ladder to itself:
\[
\Gamma_1 = \begin{tikzpicture}[baseline=(current bounding box.center)]
    \begin{scope}[decoration={
    markings,
    mark=at position 1 with {\arrow{>}}}
    ] 
    \node (X) at (0,0) {$x$};
    \node (Y) at (1,0) {$y$};
    \node (U) at (0,-1) {$A_1$};
    \node (V) at (1,-1) {$B_1$};
    \draw[postaction=decorate] (X) to[bend left] (Y);
    \draw[postaction=decorate] (Y) to[bend left] (X);
    \draw[postaction=decorate] (X) -- (U);
    \draw[postaction=decorate] (Y) -- (V);
\end{scope}
\end{tikzpicture}
\qquad\qquad
\Gamma_2 = \begin{tikzpicture}[baseline=(current bounding box.center)]
    \begin{scope}[decoration={
    markings,
    mark=at position 1 with {\arrow{>}}}
    ] 
    \node (X) at (0,0) {$A_1$};
    \node (Y) at (1,0) {$B_1$};
    \node (U) at (0,-1) {$A_2$};
    \node (V) at (1,-1) {$B_2$};
    \draw[postaction=decorate] (X) to[bend left] (Y);
    \draw[postaction=decorate] (Y) to[bend left] (X);
    \draw[postaction=decorate] (X) -- (U);
    \draw[postaction=decorate] (Y) -- (V);
\end{scope}
\end{tikzpicture}
\qquad\qquad
\Gamma = \begin{tikzpicture}[baseline=(current bounding box.center)]
    \begin{scope}[decoration={
    markings,
    mark=at position 1 with {\arrow{>}}}
    ] 
    \node (X) at (0,0) {$x$};
    \node (Y) at (1,0) {$y$};
    \node (U) at (0,-1) {$A_1$};
    \node (V) at (1,-1) {$B_1$};
    \node (A) at (0,-2) {$A_2$};
    \node (B) at (1,-2) {$B_2$};
    \draw[postaction=decorate] (X) to[bend left] (Y);
    \draw[postaction=decorate] (Y) to[bend left] (X);
    \draw[postaction=decorate] (X) -- (U);
    \draw[postaction=decorate] (Y) -- (V);
    \draw[postaction=decorate] (U) to[bend left] (V);
    \draw[postaction=decorate] (V) to[bend left] (U);
    \draw[postaction=decorate] (U) -- (A);
    \draw[postaction=decorate] (V) -- (B);
\end{scope}
\end{tikzpicture}.
\]
Since the $\Gamma_1$ is connected to $\Gamma_2$ only via $A_1,B_1$ 
it follows from the definition of $\graphform_\Gamma$ that (writing the graph in place of the differential form)
\begin{align*}
\graphform_\Gamma = \graphform_{\Gamma_2}\wedge\graphform_{\Gamma_1} = 
\begin{tikzpicture}[baseline=(current bounding box.center)]
    \begin{scope}[decoration={
    markings,
    mark=at position 1 with {\arrow{>}}}
    ] 
    \node (X) at (0,0) {$A_1$};
    \node (Y) at (1,0) {$B_1$};
    \node (U) at (0,-1) {$A_2$};
    \node (V) at (1,-1) {$B_2$};
    \draw[postaction=decorate] (X) to[bend left] (Y);
    \draw[postaction=decorate] (Y) to[bend left] (X);
    \draw[postaction=decorate] (X) -- (U);
    \draw[postaction=decorate] (Y) -- (V);
\end{scope}
\end{tikzpicture}
\bigwedge
\begin{tikzpicture}[baseline=(current bounding box.center)]
    \begin{scope}[decoration={
    markings,
    mark=at position 1 with {\arrow{>}}}
    ] 
    \node (X) at (0,0) {$x$};
    \node (Y) at (1,0) {$y$};
    \node (U) at (0,-1) {$A_1$};
    \node (V) at (1,-1) {$B_1$};
    \draw[postaction=decorate] (X) to[bend left] (Y);
    \draw[postaction=decorate] (Y) to[bend left] (X);
    \draw[postaction=decorate] (X) -- (U);
    \draw[postaction=decorate] (Y) -- (V);
\end{scope}
\end{tikzpicture},
\end{align*}
and by the definition of $c_\Gamma$ we have with $U = \{x,y,A_1,B_1\}$, $R =  \{A_2,B_2\}$ that
\begin{align*}
c_\Gamma  &=  \int_{\conf_{U,R}}
\begin{tikzpicture}[baseline=(current bounding box.center)]
    \begin{scope}[decoration={
    markings,
    mark=at position 1 with {\arrow{>}}}
    ] 
    \node (X) at (0,0) {$A_1$};
    \node (Y) at (1,0) {$B_1$};
    \node (U) at (0,-1) {$A_2$};
    \node (V) at (1,-1) {$B_2$};
    \draw[postaction=decorate] (X) to[bend left] (Y);
    \draw[postaction=decorate] (Y) to[bend left] (X);
    \draw[postaction=decorate] (X) -- (U);
    \draw[postaction=decorate] (Y) -- (V);
\end{scope}
\end{tikzpicture}\bigwedge
\begin{tikzpicture}[baseline=(current bounding box.center)]
    \begin{scope}[decoration={
    markings,
    mark=at position 1 with {\arrow{>}}}
    ] 
    \node (X) at (0,0) {$x$};
    \node (Y) at (1,0) {$y$};
    \node (U) at (0,-1) {$A_1$};
    \node (V) at (1,-1) {$B_1$};
    \draw[postaction=decorate] (X) to[bend left] (Y);
    \draw[postaction=decorate] (Y) to[bend left] (X);
    \draw[postaction=decorate] (X) -- (U);
    \draw[postaction=decorate] (Y) -- (V);
\end{scope}
\end{tikzpicture}.
\end{align*}

This formulation suggests we can break down the integration into two steps: integrate $x$ and $y$, then integrate $A_1$ and $B_1$.
The first step is:
\begin{align*}
    h_1(A_1,B_1) := \int\Gamma_1 = \int_{\{x,y\}}
\begin{tikzpicture}[baseline=(current bounding box.center)]
    \begin{scope}[decoration={
    markings,
    mark=at position 1 with {\arrow{>}}}
    ] 
    \node (X) at (0,0) {$x$};
    \node (Y) at (1,0) {$y$};
    \node (U) at (0,-1) {$A_1$};
    \node (V) at (1,-1) {$B_1$};
    \draw[postaction=decorate] (X) to[bend left] (Y);
    \draw[postaction=decorate] (Y) to[bend left] (X);
    \draw[postaction=decorate] (X) -- (U);
    \draw[postaction=decorate] (Y) -- (V);
\end{scope}
\end{tikzpicture}.
\end{align*}
where we are left with a function $h_1$ on $\closure\conf_{\{A_1,B_1\}}$,
i.e., $A_1$ and $B_1$ may vary in the upper half plane or $\reals$.
Then, for the second integration,
\begin{align*}
    h_2(A_2,B_2) = \int h(A_1,B_1)\Gamma_2 = \int_{\{A_1,B_1\}} h(A_1,B_1)
\begin{tikzpicture}[baseline=(current bounding box.center)]
    \begin{scope}[decoration={
    markings,
    mark=at position 1 with {\arrow{>}}}
    ] 
    \node (X) at (0,0) {$A_1$};
    \node (Y) at (1,0) {$B_1$};
    \node (U) at (0,-1) {$A_2$};
    \node (V) at (1,-1) {$B_2$};
    \draw[postaction=decorate] (X) to[bend left] (Y);
    \draw[postaction=decorate] (Y) to[bend left] (X);
    \draw[postaction=decorate] (X) -- (U);
    \draw[postaction=decorate] (Y) -- (V);
\end{scope}
\end{tikzpicture}
\end{align*}
integrates out $A_1,B_1$ and is now a function on $\closure\conf_{\{A_2,B_2\}}$. By evaluating $h_2$ at $(0,1)$ (i.e., by considering the restriction to the stratum $\conf_{\emptyset, \{A_2, B_2\}}$) and using the isomorphism given in \Cref{ex:calv_ab_is_mzvs}, we obtain $c_\Gamma$.
Notice that each step of this process evaluated the integral of the same ladder subgraph times some function;
hence, given a formula for such an integral, we may evaluate the integral of any sized ladder by iterating that formula (see \Cref{ex:hgraph_single_rung}).
\end{ex}

\section{Nautical polylogarithms and their integrals}

In this section we define a new class of polylogarithms and show that some Kontsevich integrals
are particularly easy to evaluate in terms of these polylogarithms.

\subsection{The algebra of nautical polylogarithms}
\begin{define}
    Let $a, b \in U \disunion R_\infty$, and let $u_1,\dots,u_n \in U\disunion R$ 
    where both (1) $a \in U$ or $a \neq u_n$, and (2) $b \in U$ or $b \neq u_1$ occur.
    Then the polylogarithm $\hyperlog{\conj a}{\conj b}{u_1\cdots u_n}$ with path between $\conj a$ and $\conj b$ 
    is \emph{nautical}.
\end{define}

Given some configuration $U \hookrightarrow \uhp$ and $R \hookrightarrow \reals$,
we can visualize the geometric setup of these polylogarithms as a path in the lower half plane,
with marked points in the upper half plane. 
The name ``nautical'' is derived from the following analogy, pictured in \Cref{fig:naut_example}:
think of the lower half plane as the ocean, the path as the path of a ship, and the upper half plane as the sky with the marked points serving as stars.
\begin{figure}
    \centering
\begin{tikzpicture}
\begin{scope}[decoration={
    markings,
    mark=at position 0.5 with {\arrow{>}}}
    ] 
    \node at (-1, 0) (R) {$\reals$};
    \node at (-1, 1)  {$\uhp$};
    \node at (-1, -1) {$\lhp$};
    \draw (R) -- (4,0); 
    \node at (1,1) {$u_1^{\;\times}$};
    \node at (1.7,0.3) {$u_2$};
    \node at (1.7,0) {$\times$};
    \node at (3,1.2) {$u_3^{\;\times}$};
    
    \node at (-0.2, 0.3) (A) {$\conj a=a$};
    \node at (-0.2, 0) (A) {$\times$};
    \node at (2.5, -0.6) (B) {$\conj b$};
    \draw [postaction={decorate}] (-0.2,0) to[out=270, in=210] (B);
\end{scope}
\end{tikzpicture}
    \caption{
The setup of $\comps$ for $\hyperlog{\conj a}{\conj b}{u_1au_2u_3}$, where $a, u_2 \in R$ and $u_1, u_3 \in U$.
}
    \label{fig:naut_example}
\end{figure}

Since these polylogarithms have no marked points in $\lhp$ and the path is in $\lhp \union \{\conj a, \conj b\}$, the homotopy class of the path is unambiguous,
and so nautical polylogarithms are single-valued.
In addition, by requiring that $a\neq u_n$ when $a\in R$ and $b\neq u_1$ when $b\in R$ we ensure these polylogarithms are always convergent.
We can thus regard these objects as global sections of $\calv_{U,R}$
and more generally, use them to define global sections of $\closure\calv_{T}^X$
when marked points are allowed to vary between $\uhp$ and $\reals$:
\begin{define}
The \emph{algebra of nautical polylogarithms} is the subring of $H^0(\calv_{U,R})$ generated by nautical polylogarithms.
In addition, $\closure\nauts_T^X \subset H^0(\closure\calv_{T}^X)$ is the direct sum of $\nauts_{U,R}$ over all strata of $\closure\conf_T^X$.
\end{define}
Nautical polylogarithms in general are holomorphic with respect to all $u_i \neq a, b$,
and real analytic with respect to $a$ and $b$.
If $a,b \neq u_i$ for all $i$ then the nautical polylogarithm $\hyperlog{\conj a}{\conj b}{u_1\cdots u_n}$ is an antiholomorphic function of $a$ and $b$,
or equivalently a holomorphic function of $\conj a$ and $\conj b$.

The following Lemma will aid in integrating these polylogarithms.
\begin{lem}
    Let $\hyperlog{\conj a}{\conj x}{u_1\cdots u_n} \in \nauts_{U,R}$ with $x \in U$ and $n \geq 1$. Then
    \begin{align*}
        \conj\partial \hyperlog{\conj a}{\conj x}{u_1\cdots u_n} = \conj\omega_{u_1}(x) L(\conj a | u_2\cdots u_n | \conj x)
    \end{align*}
    where $\conj\partial$ is the Dolbeault differential with respect to $\conj x$ in the fibres $\disk_{U,R}$. 
    \label{lem:ftc_for_nauts}
\end{lem}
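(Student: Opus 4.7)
The lemma is essentially a Dolbeault version of the fundamental theorem of calculus for Chen iterated integrals with a varying endpoint. I would prove it in three steps.

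First, I would invoke the subpath identity recalled in the excerpt to peel off the outermost form: for any path $\gamma$ from $\conj a$ to $\conj x$ lying in $\lhp \cup \{\conj a, \conj x\}$,
$$\hyperlog{\conj a}{\conj x}{u_1 \cdots u_n} \;=\; \int_\gamma \omega_{u_1}(y)\cdot \hyperlog{\conj a}{y}{u_2 \cdots u_n},$$
where for each $y$ on $\gamma$ the inner factor is the iterated integral along the subpath $\gamma_y$ from $\conj a$ to $y$. The nauticality hypotheses on $a, b, u_1, u_n$ ensure convergence of both the outer and inner integrals, and that each inner factor is itself a well-defined nautical polylogarithm.

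Next, I would observe that for $y \in \lhp$ the integrand $\omega_{u_1}(y)\,\hyperlog{\conj a}{y}{u_2 \cdots u_n}$ is a holomorphic $(1,0)$-form in $y$: since no $u_i$ lies in $\lhp$, the form $\omega_{u_1}(y)$ is holomorphic there, and the inner polylogarithm depends holomorphically on its upper endpoint $y$ by iterating the same factorization (Chen iterated integrals of holomorphic $(1,0)$-forms along a path are holomorphic in the endpoint). Applying the ordinary fundamental theorem of calculus to the outer integral viewed as a function of its upper endpoint $\conj x$ then yields
$$d\,\hyperlog{\conj a}{\conj x}{u_1 \cdots u_n} \;=\; \omega_{u_1}(y)\Big|_{y=\conj x}\cdot\hyperlog{\conj a}{\conj x}{u_2 \cdots u_n},$$
where $d$ denotes the fibrewise exterior differential on $\disk_{U,R}$. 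Finally, since the left hand side depends on the fibre coordinate $x \in \uhp$ only through its conjugate $\conj x$, it is antiholomorphic in $x$; hence its $\partial$-part vanishes and $d = \conj\partial$, while $\omega_{u_1}(y)|_{y=\conj x} = \frac{1}{2\pi i}\frac{\dd\conj x}{\conj x - u_1} = \conj\omega_{u_1}(x)$ by the defining formulas.

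The only point requiring real care, rather than a substantive obstacle, is this last bookkeeping step: the outer path is naturally parametrized by a variable $y \in \lhp$, whereas the claim is stated in terms of the Dolbeault differential of the fibre coordinate $x \in \uhp$. Antiholomorphy in $x$ reconciles the two, and the hypothesis $n \geq 1$ guarantees that there is an outer form to differentiate, so that the endpoint-only computation of $d$ is valid and no boundary issue arises at $\conj a$.
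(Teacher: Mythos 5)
Your first two steps match the paper's proof exactly: peel off the outermost form via the iterated-integral identity, then apply the fundamental theorem of calculus at the endpoint $\conj x$. The computation you arrive at is the right one. However, your final justification contains an inaccuracy that matters for how the lemma is actually used. You claim that the polylogarithm ``depends on the fibre coordinate $x\in\uhp$ only through its conjugate $\conj x$,'' is therefore antiholomorphic in $x$, and hence $d=\conj\partial$. This is false in general under the hypotheses of the lemma: the letters satisfy $u_1,\dots,u_n\in U\disunion R$ with $x\in U$, so some $u_i$ may equal $x$ itself (the nauticality condition only forbids $\conj x$ from appearing, and condition (2) in the definition is vacuous here since the endpoint $b=x$ lies in $U$). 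This case is not a technicality --- it is exactly the situation in which the lemma is applied in the proof of \Cref{lem:wedgeint}, where the primitive $\hyperlog{\conj a}{\conj x}{u\cdot w}$ has $x$ occurring as a letter of $w$ (hence the substitutions $w_{x\mapsto v}$ and the case distinction on whether $x$ is in $w$). In that situation the function has nontrivial holomorphic dependence on $x$ through the letters, so $d\neq\conj\partial$ and your identification of the two differentials breaks down.

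The repair is small and is what the paper's own argument implicitly does: one does not need $d=\conj\partial$, only that the $(0,1)$-part of the differential in $x$ comes entirely from the variation of the endpoint $\conj x$. Since $\conj x\in\conj U$ while every letter lies in $U\disunion R$, no letter equals $\conj x$, so the integrand of the outer integral depends on $x$ only holomorphically (through any letters equal to $x$) and that dependence is annihilated by $\conj\partial$. Applying the fundamental theorem of calculus to the endpoint alone then yields $\conj\partial\hyperlog{\conj a}{\conj x}{u_1\cdots u_n}=\omega_{u_1}(\conj x)\,\hyperlog{\conj a}{\conj x}{u_2\cdots u_n}=\conj\omega_{u_1}(x)\,\hyperlog{\conj a}{\conj x}{u_2\cdots u_n}$ as claimed. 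With that correction your argument coincides with the paper's.
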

\begin{proof}
Using the definition of the iterated integral, we have
\begin{align*}
     \hyperlog{\conj a}{\conj x}{u_1\cdots u_n} = \int_{\conj a}^{\conj x} \omega_{u_1}\cdots\omega_{u_n} =  \int_{\conj a}^{\conj x} \omega_{u_1}(t)\int_{\conj a}^{\conj t}\omega_{u_2}\cdots\omega_{u_n}.
\end{align*}
Then since $u_1,\dots, u_n \in U \disunion R$ and $\conj x \in \conj U$ we have $\conj x \neq u_j$ for any $j$, so the fundamental theorem of calculus implies
\begin{align*}
    \conj\partial L(\conj a | u_1\cdots u_n | \conj x) = \omega_{u_1}(\conj x)\int_{\conj a}^{\conj x}\omega_{u_2}\cdots\omega_{u_n} = \conj\omega_{u_1}(x)\hyperlog{\conj a}{\conj x}{u_2\cdots u_n}
\end{align*}
as desired.
\end{proof}

\subsection{Tools for integrating nautical polylogarithms}
We now consider the simplest case of a pushforward, in which we integrate out a single variable, i.e.~we consider a pushforward along the projection, 
\[
\closure \conf_{T\disunion \{x\}}^{\{x\}} \to \closure\conf_T
\]
which is exactly the universal disk
\[
\closure \disk_T \to \closure\conf_T,
\]
with fibre coordinate $x$.

More precisely, we consider an integral $\int_{x} \hyperlog{\conj a}{\conj x}{w} \conj\omega_{u}(x)\wedge\omega_{v}(x)$,
where $\hyperlog{\conj a}{\conj x}{w} \in \closure\nauts_{T\disunion\{x\}}^{\{x\}}$ is a nautical polylogarithm, where $u \in T$, and $v \in S$.
We compute this integral in each stratum $\conf_{U\disunion\{x\},R}\subset\closure\conf_{T\disunion\{x\}}^{\{x\}}$
by identifying $\conf_{U\disunion \{x\},R}$ with the universal disk $\disk_{U,R} \to \conf_{U,R}$, and integrating over the fibres with coordinate $x$.
These kinds of integrals arise when integrating Kontsevich graphs,
and it is easy to construct  a nautical $(1,0)$ primitive of the integrand using \Cref{lem:ftc_for_nauts}:
\begin{align*}
    \conj\partial\hyperlog{\conj a}{\conj x}{u\cdot w}\omega_v(x) = \hyperlog{\conj a}{\conj x}{w}\conj\omega_u(x)\wedge\omega_v(x).
\end{align*}
Then we will be left to compute $\int_{\partial \disk_{U,R}}\hyperlog{\conj a}{\conj x}{u\cdot w}\omega_v(x)$ by Stokes' Theorem. 
The result will sometimes still be nautical, which we will exploit in \Cref{sec:construct_mzvs}.

The main tool used for computing the boundary integral is \Cref{prop:general_residue_thm} below,
which requires a preliminary definition.
\begin{define}
    Suppose $p \in S$, and (under a configuration $S \hookrightarrow \comps$) let $\epsilon_r(p)$ be a counterclockwise loop about $p$ of radius $r > 0$, cut to the left of $p$:
\[\begin{tikzpicture}[baseline=(current bounding box.center)]
\node at (0,0) [label=below:$p$]{$\bullet$};
\draw [->]([shift=(-175:0.7)] 0, 0) arc (-175:175:0.7);
\node at (0.9,0.7) {$\epsilon_p(r)$};
\draw [dashed](0,0) -- (-1.5,0);
\end{tikzpicture}.
\]
   Let $V \subseteq \comps \setminus S$ be an open subset whose closure contains $p$, and suppose that $\alpha$ is a real analytic (1,0) form defined on $V$ that admits an analytic continuation to a form $\tilde\alpha$ on a disc centred at $p$ with a branch cut to the left. 
    The \emph{normalized residue} of $\alpha$ at $p$ is
    \begin{align*}
        \NRes_p \alpha = \lim_{r\to 0}\int_{\epsilon_r(p)}\tilde\alpha.
    \end{align*}
    Note that $\NRes_p \alpha = 2\pi i \Res_p \alpha$ when the usual residue $\Res_p\alpha$ is defined.
\end{define}

\begin{prop}
    Let $\alpha$ be a single-valued polylogarithmic (1,0)-form defined on $\disk_{U,R}$,
    and suppose that $\beta$ is a single-valued holomorphic (1,0)-form defined over $\conjspace\disk_{U,R}$
    such that $\alpha$ and $\beta$ agree along $\reals$,
    i.e., the pullbacks of $\alpha$ and $\beta$ under $\univreals_R \hookrightarrow \partial\disk_{U,R}$ and $\univreals_R \hookrightarrow \partial\conjspace\disk_{U,R}$ agree
    when $\disk_{U,R}$ and $\conjspace\disk_{U,R}$ are naturally embedded in $\rsphere_{U,R}$.
    Then
    \begin{align}
        \int_{\partial \disk_{U,R}} \alpha = -\sum_{p \in R_\infty \disunion \conj U} \NRes_{p} \beta - \sum_{p\in U}\NRes_{p}\alpha \label{eqn:general_residue_thm}
    \end{align}
\label{prop:general_residue_thm}
\end{prop}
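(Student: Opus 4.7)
The plan is to evaluate $\int_{\partial\disk_{U,R}}\alpha$ by decomposing the boundary, after real-oriented blowup, into three kinds of pieces: (i) full circles around each interior point $u \in U$, (ii) upper half-arcs around each boundary point $p \in R_\infty$, and (iii) the open arcs of $\univreals_R$ running between consecutive marked points. All pieces are oriented so that $\disk_{U,R}$ lies on the left, which means the circles and half-arcs are traversed clockwise around their marked points. The goal is then to massage each type into a normalized residue, with $\beta$ playing the role of the form for marked points in $R_\infty \disunion \conj U$ and $\alpha$ for those in $U$.

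The type (i) circles contribute $-\sum_{u \in U}\NRes_u \alpha$ directly, since by definition $\NRes$ uses the counterclockwise loop. For type (iii) we first use the hypothesis $\alpha|_{\univreals_R} = \beta|_{\univreals_R}$ to replace $\alpha$ by $\beta$ on each $\univreals_R$-arc. Because $\beta$ is a holomorphic $(1,0)$-form on the complex $1$-manifold $\conjspace\disk_{U,R}$ we have $d\beta = 0$, so Stokes' theorem applied to $\conjspace\disk_{U,R}$ (with small $\epsilon$-neighborhoods of every boundary marked point removed) yields $\int_{\partial\conjspace\disk_{U,R}}\beta = 0$. Decomposing $\partial\conjspace\disk_{U,R}$ symmetrically---clockwise circles around each $\conj u \in \conj U$, lower half-arcs around each $p \in R_\infty$, and the same arcs of $\univreals_R$ but with reversed orientation---this equation expresses $\int_{\univreals_R}\beta$ in terms of the $\conj U$-circle integrals and the lower half-arc integrals. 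The $\conj U$-circles then produce $-\sum_{\conj u \in \conj U}\NRes_{\conj u}\beta$ as in step (i), leaving only the combined contribution of the upper and lower half-arcs around each $p \in R_\infty$.

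For each such $p$, the upper half-arc (integrated against $\alpha$) and the lower half-arc (integrated against $\beta$) glue topologically into a full clockwise loop around $p$ in $\rsphere_{U,R}$. The idea is to analytically continue $\beta$ across $\univreals_R$ near $p$ to obtain a form $\tilde\beta$ on a disk around $p$ with a branch cut to the left; this continuation exists by the very hypothesis that makes $\NRes_p\beta$ well-defined. Since $\alpha$ and $\tilde\beta$ both restrict to $\beta$ on $\univreals_R$ near $p$, the difference $\alpha - \tilde\beta$ vanishes along $\univreals_R$ and so takes the form $g(z,\conj z)(z - \conj z)\,dz$ on the upper half-plane for some polylogarithmic function $g$. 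The extra factor $z - \conj z$ contributes an $O(\epsilon)$ bound on an upper half-arc of radius $\epsilon$, enough to dominate any logarithmic divergence of $g$; hence the integral of $\alpha - \tilde\beta$ over that half-arc tends to $0$ as $\epsilon \to 0$. What remains is the full clockwise loop integral of $\tilde\beta$, which is $-\NRes_p\beta$ by definition of the normalized residue. The main obstacle I expect is precisely this final replacement: verifying rigorously that $\alpha - \tilde\beta$ has the claimed form with only mild logarithmic singularities absorbed by the $z - \conj z$ factor, which requires that $\alpha$ and the continuation $\tilde\beta$ lie in a sufficiently compatible polylogarithmic class around $p$.
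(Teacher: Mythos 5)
Your decomposition of $\partial\disk_{U,R}$ and the overall strategy match the paper's proof: the clockwise loops about $U$ give $-\sum_{p\in U}\NRes_p\alpha$ directly, the real intervals are converted to $\beta$ via the agreement hypothesis, and the residues at $\conj U$ are extracted from the holomorphicity of $\beta$ on the punctured lower half-plane. Your use of Stokes' theorem with $d\beta=0$ on $\conjspace\disk_{U,R}$ is an equivalent repackaging of the paper's contour deformation of a loop enclosing $\lhp$ onto small clockwise circles about the points of $\conj U$; nothing is lost or gained there.

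The one genuine divergence is your treatment of the half-arcs at the points of $R_\infty$, and it is exactly where your proof is incomplete. The paper does not argue this step directly: it invokes Proposition 4.19 of \cite{Banks_2020}, which asserts that the integral of $\alpha$ over the shrinking upper half-loop may be computed using the analytic continuation of $\beta$ into $\uhp$. Your proposed substitute---that $\alpha-\tilde\beta$ vanishes on $\univreals_R$ near $p$ and hence equals $g(z,\conj z)(z-\conj z)\,dz$ with $g$ mildly singular---has two concrete problems. First, the continuation $\tilde\beta$ obtained by crossing $\reals$ on one side of $p$ and carrying it around through $\uhp$ need not agree with $\beta$ (hence with $\alpha$) on the other side of $p$, so the vanishing of $\alpha-\tilde\beta$ is only guaranteed on a single ray, which is not enough to divide by $z-\conj z$. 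Second, even granting vanishing on all of $\reals$ near $p$, the division must be carried out inside a function class containing terms like $f(z)\,\omega_p$ and $h(z,\conj z)\bigl(\log(z-\conj z)\bigr)^k$; the latter blow up rather than vanish as $z\to\reals$, so the cancellation that makes $\alpha-\tilde\beta$ vanish there is a nontrivial structural constraint on the singular parts, not a pointwise factorization you can assume. Since $\alpha$ may carry a simple pole at $p$ (e.g.\ a factor $\omega_p$), the half-arc integral of $\alpha$ alone does not tend to zero, and matching its singular part against that of $\tilde\beta$ is precisely the content of the cited proposition. As written, your argument asserts this matching rather than proving it; you should either supply that analysis for the relevant polylogarithmic class or, as the paper does, quote \cite[Proposition 4.19]{Banks_2020}.
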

\begin{proof}
We can embed the fibres of $\disk_{U,R}$ and $\conjspace\disk_{U,R}$ into $\rsphere_{U,R}$ as the punctured upper and lower half planes of $\comps\projs^1$ and compute the integral in $\comps\projs^1$.
The components of $\partial\disk_{U,R}$ come in three types:
(1) tiny clockwise loops about each $p\in U$,
(2) the collection of intervals $\reals \setminus R$, and
(3) clockwise tiny half-loops around for each point $r \in R_\infty$.
By definition, the integral over the components of type (1) is equal to $-\sum_{p\in U}\NRes_p\alpha$, accounting for the last term in the statement.

Next, by assumption, $\beta$ is defined over the punctured lower half plane,
and $\alpha = \beta$ on $\reals\setminus R$.
It is clear that the integral along the intervals of type (2) can be computed using $\beta$,
and Proposition 4.19 in \cite{Banks_2020} says that the integrals about half loops of type (3) can also be computed using $\beta$, extending by analytic continuation into $\uhp$.
We obtain the following integration path for $\beta$:
\[
\begin{tikzpicture}
    \node at (-1.8,0) {$\reals$};
    \node at (-1.3, 0) {$\cdots$};
    \draw [->](-1, 0) -- (0,0);
    
    \node at (0.5,0) {$r_1$} ;
    \draw [->]([shift=(170:0.5)] 0.5, 0) arc (170:0:0.5) -- (2,0);
    
    \node at (2.5,0) {$r_2$} ;
    \draw [->]([shift=(170:0.5)] 2.5, 0) arc (170:0:0.5) -- (4,0);
    
    \node at (4.4, 0) {$\cdots$};
\end{tikzpicture}
\]
where the small gaps in the integration path indicate breaks in the analytic continuation of $\beta$ into $\uhp$.
This path of integration is equivalent to a path $\gamma$ traversing all of $\reals$ with half loops in the \emph{lower} half plane, plus clockwise loops at each point $r \in R_\infty$:

\[
\begin{tikzpicture}
    \node at (-1.8,0) {$\reals$};
    \node at (-1.3, 0) {$\cdots$};
\begin{scope}[decoration={
    markings,
    mark=at position 0.1 with {\arrow{>}},
    mark=at position 0.3 with {\arrow{>}},
    mark=at position 0.5 with {\arrow{>}},
    mark=at position 0.72 with {\arrow{>}},
    mark=at position 0.93 with {\arrow{>}},
    }] 
    \draw[postaction={decorate}] (-1, 0) -- 
      ([shift=(180:0.5)] 0.5, 0) arc (180:360:0.5) -- 
      (2,0) --  
      ([shift=(180:0.5)] 2.5, 0) arc (180:360:0.5) -- 
      (4,0);
\end{scope}
    \node at (-0.75, -0.25) {$\gamma$};
    
    \draw[red, ->] ([shift=(175:0.45)] 0.5, 0) arc (175:-175:0.45);
    \node at (0.5,0) {$r_1$} ;

    \draw[red, ->] ([shift=(175:0.45)] 2.5, 0) arc (175:-175:0.45);
    \node at (2.5,0) {$r_2$} ;
    
    \node at (4.4, 0) {$\cdots$};
\end{tikzpicture}
\]

The clockwise loops now contribute the terms $-\sum_{p\in R_\infty} \NRes_p\beta$ in \Cref{eqn:general_residue_thm}.
Since $\beta$ is holomorphic in $\lhp \setminus \conj U$, a deformation of the loop $\gamma$ (which encloses $\lhp$) to small clockwise loops about each point $p \in \conj U$
accounts for the remaining $-\sum_{p\in \conj U} \NRes_p\beta$ in \Cref{eqn:general_residue_thm}.
\end{proof}

Most nautical polylogarithms naturally satisfy the hypotheses of \Cref{prop:general_residue_thm} when their endpoint is the variable of integration, as
illustrated by the following example.
\begin{ex}
\label{ex:finding_lhp_form}
Consider the following polylogarithms:
    \[
\begin{tikzpicture}
\begin{scope}[decoration={
    markings,
    mark=at position 0.5 with {\arrow{>}}}
    ] 
    \begin{scope}[shift={(0,0)}]
        \draw (-8,0) -- (-4,0); 
        \node at (-6.5,1) {$u^{\;\times}$};
        \node at (-6,0.3) {$1$};
        \node at (-6,0) {$\times$};
        
        \node at (-7.2, 0.3) (A) {$0$};
        \node at (-5,-0.6) (B) {$\conj x$};
        \node at (-5,0.6) {$x^{\;\times}$};
        \draw [postaction={decorate}] (-7.2,0) to[out=270, in=200] (B);

        \node at (-6,-1.5) {$\alpha(x) = \hyperlog{0}{\conj x}{ux1}\dd x$};
    \end{scope}

    \begin{scope}[shift={(5,0)}]
        \draw (-8,0) -- (-4,0); 
        \node at (-6.5,1) {$u^{\;\times}$};
        \node at (-6,0.3) {$1$};
        \node at (-6,0) {$\times$};
        
        \node at (-7.2, 0.3) (A) {$0$};
        \node at (-5,0.3) (B) {$x = \conj x$};
        \node at (-5,0) {$\times$};
        \draw [postaction={decorate}] (-7.2,0) to[out=270, in=270] (-5,0);
        
        \node at (-6,-1.5) {$\alpha|_\reals = \hyperlog{0}{x}{ux1}\dd x = \beta|_\reals$};
    \end{scope}
    
    \begin{scope}[shift={(10,0)}]
        \draw (-8,0) -- (-4,0); 
        \node at (-6.5,1) {$u^{\;\times}$};
        \node at (-6,0.3) {$1$};
        \node at (-6,0) {$\times$};
        
        \node at (-7.2, 0.3) (A) {$0$};
        \node at (-4.7,-0.6) {$\conj x$};
        \node at (-5,-0.6) {$\times$};
        \draw [postaction={decorate}] (-7.2,0) to[out=270, in=200] (-5,-0.6);
        
        \node at (-6,-1.5) {$\beta(\conj x) = \hyperlog{0}{\conj x}{u\conj x1}\dd \conj x$};
    \end{scope}
\end{scope}
\end{tikzpicture}
\]
    The nautical polylogarithmic form $\alpha(x) = \hyperlog{0}{\conj x}{ux1}\dd x$ has $u \in U$ with labels $0,1 \in R$ identified with $0,1\in\comps$.
    The restriction of $\alpha(x)$ to $\reals \setminus \{1\}$ is given by $\hyperlog{0}{x}{ux1}\dd x$ where the path from 0 to $x$ is always in $\lhp \union \{0, x\}$, matching the choice of path for $\alpha$.

    This restricted form agrees with the holomorphic (1,0)-form $\beta(x) = \hyperlog{0}{x}{u x 1}\dd x$ along $\reals \setminus \{1\}$
    where, $x$ is thought to be a holomorphic coordinate in the $\emph{lower}$ half plane.
    Since $x$ is conceptually a coordinate on the upper half plane in the fibres of $\disk_{U,R}$,
    we equivalently state that
    the $\alpha$ equals the form $\beta(\conj x) = \hyperlog{0}{\conj x}{u \conj x 1}\dd \conj x$ along $\reals \setminus \{1\}$,
    which remains holomorphic as a function of $\conj x$
    as it ranges over $\lhp$, i.e., ranges over $\conjspace\disk_{U,R}$.
    Despite $\conj x$ being a letter in the polylogarithm's integrand and so potentially introducing  monodromy, we establish that $\beta$ is single-valued over $\conjspace\disk_{U,R}$ in \Cref{lem:lhp_restriction_is_sv} below.
\end{ex}
Note however that not all nautical polylogarithms restrict so naturally;
for instance, $\hyperlog{0}{\conj x}{xu}$ is no longer nautical when $x$ is restricted to $\reals$ (and in fact diverges).
Our proofs will avoid such cases.

\begin{lem}
    \label{lem:lhp_restriction_is_sv}
    Let $a \in U \disunion R_\infty$, and let $s_1,\dots,s_n \in U \disunion R \disunion \{\conj x\}$ with  $s_1 \neq \conj x$.
    Then the polylogarithm $\hyperlog{\conj a}{\conj x}{s_1\cdots s_n}$ with integration path contained in the fibres of $\conjspace\disk_{U,R}$ is single-valued as $\conj x$ ranges in a fibre of $\conjspace\disk_{U,R}$.
\end{lem}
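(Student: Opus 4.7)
My plan is to prove single-valuedness via a monodromy computation. Since $\pi_1(\lhp\setminus\conj U)$ is generated by small loops around each puncture, it suffices to fix a loop $\Lambda$ based at some $\conj x_0$ going once around a single puncture $\conj u \in \conj U$ and show that $F(\conj x) := \hyperlog{\conj a}{\conj x}{s_1\cdots s_n}$ returns to its original value after $\conj x$ traverses $\Lambda$. I pick a continuous family of paths $\gamma_{\conj x}$ from $\conj a$ to $\conj x$ with the deformation localized near $\conj u$; then the returning path $\gamma_1$ differs from the original $\gamma_0$ by $\gamma_1 \simeq L * \gamma_0$, where $L$ is a small loop based at $\conj x_0$ encircling $\conj u$ once and contained in a small disk $D \subset \lhp$ that contains $\conj u$ but excludes $\conj x_0$ and every $s_i$.

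Applying the path concatenation formula \eqref{eqn:path_concat} then gives
\begin{equation*}
F(\conj x_0)_{\mathrm{new}} - F(\conj x_0)_{\mathrm{old}} = \sum_{k=1}^n \left(\int_L \omega_{s_1}\cdots\omega_{s_k}\right) \hyperlog{\conj a}{\conj x_0}{s_{k+1}\cdots s_n},
\end{equation*}
so triviality of monodromy reduces to the vanishing of each loop integral $\int_L \omega_{s_1}\cdots\omega_{s_k}$ for $k\geq 1$. The crucial observation is that no $s_i$ coincides with $\conj u$: the sets $U$, $R$, and $\conj U$ are pairwise disjoint, and $\conj x_0 \neq \conj u$ within the fibre. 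Hence each form $\omega_{s_i}$ is holomorphic throughout $D$. Since $D$ is simply connected and holomorphic $(1,0)$-forms on a Riemann surface automatically satisfy Chen's integrability conditions --- because $\omega_{s_i}\wedge\omega_{s_j}$ vanishes for type reasons --- iterated integrals in $D$ are homotopy invariant, and contracting $L$ to a point immediately yields $\int_L\omega_{s_1}\cdots\omega_{s_k} = 0$ for every $k\geq 1$.

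The main subtlety to be careful about is that a sub-polylogarithm $\hyperlog{\conj a}{\conj x_0}{s_{k+1}\cdots s_n}$ on the right can diverge at the endpoint when $s_{k+1} = \conj x_0$. However, every such divergent factor is multiplied by a loop integral that vanishes, so the product is harmless under a standard regularization; equivalently, one can reroute $L$ to be based at an interior point of $\gamma_0$ rather than at its endpoint, which renders each sub-polylog convergent. With this handled, the vanishing of all loop integrals gives $F(\conj x_0)_{\mathrm{new}} = F(\conj x_0)_{\mathrm{old}}$, which is precisely the single-valuedness claim. The hypothesis $s_1 \neq \conj x$ enters only to ensure that $F(\conj x)$ itself is convergent, so that the monodromy question is well-posed.
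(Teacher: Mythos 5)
Your argument addresses the wrong branch points, and the one that actually carries the content of the lemma is missing. The punctures $\conj u \in \conj U$ are not singularities of the integrand at all---every $\omega_{s_i}$ with $s_i \in U \disunion R$ is holomorphic on $\lhp$, and $\omega_{\conj x}$ is singular only at $\conj x$---so the vanishing of the loop integrals $\int_L \omega_{s_1}\cdots\omega_{s_k}$ around the punctures is the trivial part of the statement (the paper dismisses it in half a sentence). The substantive issue, to which the paper's proof is entirely devoted, is the point $\conj x$ itself: the hypotheses allow $s_i = \conj x$ for $i\geq 2$, so the integrand has a pole at the \emph{endpoint} of the integration path, and two admissible paths from $\conj a$ to $\conj x$ in the fibre can differ by a loop winding around $\conj x$. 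Your proof fixes one ``continuous family of paths'' and never rules out that a different, equally admissible choice---differing from yours by such a winding---yields a different value; without that, $F$ is not even known to be a well-defined germ, and the monodromy computation over $\pi_1(\lhp\setminus\conj U)$ is not yet well-posed.

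The paper's proof isolates exactly this case: it takes $\epsilon$ a loop about $\conj x$, applies the concatenation formula \Cref{eqn:path_concat} to $\epsilon * \eta$ to reduce to the loop integrals $\int_\epsilon s_1\cdots s_i$, and invokes Goncharov's Proposition 2.14 to conclude these all vanish \emph{because} $s_1 \neq \conj x$. Your closing remark that the hypothesis $s_1\neq\conj x$ ``enters only to ensure convergence'' is the giveaway that the point has been missed: that hypothesis is precisely what kills the monodromy about $\conj x$. Your treatment of the loops about $\conj U$ is correct as far as it goes and would be a reasonable sentence to add to the paper's proof for completeness, but it cannot substitute for the analysis at $\conj x$.
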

\begin{proof}
Since the path is in $\conjspace\disk_{U,R}$ and all letters in the integrand are in $U$ or $R$ except $\conj x$,
the only possible branch point is at $\conj x$.
Let $\epsilon$ be a loop about $\conj x$, and $\eta$ a path from $\conj a$ to $\conj x$.
Then the monodromy of the polylogarithm in question is computed by $\int_{\epsilon * \eta} u_1\cdots u_n - \int_{\eta} u_1\cdots u_n$.
By the path concatenation formula \Cref{eqn:path_concat} we have
\begin{align*}
    \int_{\epsilon * \eta} u_1\cdots u_n - \int_{\eta} u_1\cdots u_n = \sum_{i=1}^{n}\int_\epsilon u_1\cdots u_i\int_\eta u_{i+1}\cdots u_n.
\end{align*}
Since $u_1 \neq \conj x$, all integrals about $\epsilon$ vanish by \cite[Proposition 2.14]{goncharov2001multiple},
so we have no monodromy as desired.
\end{proof}

\begin{remark}
This is a special case that occurs since the only branch point is at an endpoint of the polylogarithm,
so the monodromy calculation only sees prefixes of the word.
\end{remark}

In order to compute the normalized residues in \Cref{prop:general_residue_thm}, we establish some analytic results.
Note that
non-zero residues occur only when the loop encircles singular points of $\alpha$ and $\beta$.
We will only need to consider isolated logarithmic singularities, and so we establish results about normalized residues at these points.
In fact, they behave much like the usual residues.

\begin{lem}
    For $k \in \nats_{\geq 0}$ and $j \in \ints$, we have $\NRes_0 z^k(\log z)^j \dd z = 0$ for any choice of branch of $\log z$.
    \label{lem:int_z_times_log}
\end{lem}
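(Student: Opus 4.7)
The plan is a direct computation using an explicit parametrization of the cut loop. Parametrize $\epsilon_r(0)$ as $z(\theta) = re^{i\theta}$ for $\theta \in (-\pi, \pi)$, giving $\dd z = ire^{i\theta}\, \dd\theta$, and for the chosen branch write $\log z = \log r + i\theta + C$ where $C \in 2\pi i\ints$ encodes the branch. Substituting yields
\begin{align*}
    \int_{\epsilon_r(0)} z^k(\log z)^j\, \dd z \;=\; ir^{k+1}\int_{-\pi}^{\pi} e^{i(k+1)\theta}(\log r + i\theta + C)^j\, \dd\theta,
\end{align*}
and the claim reduces to showing that the right-hand side vanishes as $r \to 0^+$.

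The key observation is that the integral grows at most polynomially in $|\log r|$, whereas the prefactor $r^{k+1}$ (using $k \geq 0$, so $k+1 \geq 1$) decays faster than any polynomial in $|\log r|$. More precisely, for $j \geq 0$, the binomial expansion of $(\log r + i\theta + C)^j$ writes the integrand as a polynomial in $\log r$ whose coefficients are bounded continuous functions of $\theta$ on $[-\pi,\pi]$, so the integral is $O(|\log r|^j)$. For $j < 0$, one instead observes that $|\log r + i\theta + C| \to \infty$ uniformly in $\theta \in [-\pi,\pi]$ as $r \to 0^+$ (since its real part $\log r + \Re(C)$ diverges to $-\infty$), so $(\log r + i\theta + C)^j$ tends to $0$ uniformly and hence so does the integral.

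The main potential subtlety lies in the $j<0$ case, where one must verify that $\log r + i\theta + C$ is bounded away from $0$ on $[-\pi,\pi]$ for sufficiently small $r$; this is automatic from the divergence of its real part noted above. Otherwise the argument is a routine asymptotic estimate of the form $r^{k+1}\cdot\mathrm{poly}(|\log r|) \to 0$, and the branch of logarithm enters only through the constant $C$, which does not affect these estimates.
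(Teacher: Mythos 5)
Your proof is correct and follows essentially the same route as the paper's: parametrize the loop by $re^{i\theta}$, bound the integrand by a power of $|\log r|$ times $r^{k+1}$, and use $\lim_{r\to 0} r^{k+1}(\log r)^j = 0$. If anything, your separate treatment of the case $j<0$ is slightly more careful than the paper's single triangle-inequality bound, which is only literally valid for $j\geq 0$.
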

\begin{proof}
    Let $\epsilon_r$ be a loop about zero parameterized by $re^{i\theta}$ and let $\log z = \log|z| + i\theta$ be some choice of branch of the complex logarithm where $\theta \in [\tau, \tau+2\pi]$ for some $\tau \in \reals$.
    By the triangle inequality, $\int_{\epsilon_r} |z|^k|\log z|^j \dd z \leq 2\pi r^{k+1}(|\log r| + |\tau|+ |2\pi|)^j$ for all $r > 0$.
    Since $\lim_{r \to 0} r^n (\log r)^j = 0$ for all integers $n > 0$,
    we conclude that $\NRes_{0} z^k(\log z)^j\dd z = \lim_{r\to 0}\int_{\epsilon_r} z^k(\log z)^j\dd z = 0$ for all $k \geq 0$, as desired.
\end{proof}

\begin{lem}
    Suppose $h$ is a holomorphic multivalued function defined in an open punctured disk centred at $p \in \comps$ with an isolated logarithmic singularity at $p$. Then for any choice of branch of $h$, the following statements hold:
    \begin{enumerate}
        \item $\NRes_p h(z)\dd z = 0$
        \item If $\lim_{z\to p}h(z)$ exists, then $\NRes_p(h(z)\omega_p) = \lim_{z\to p}h(z)$ 
    \end{enumerate}
    \label{lem:nres_log_singularities}
\end{lem}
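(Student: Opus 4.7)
The plan is to reduce both statements to elementary size estimates on the loop $\epsilon_r(p)$ as $r \to 0$, together with Lemma \ref{lem:int_z_times_log}.

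First I would pick a holomorphic coordinate $z$ centred at $p$ in which the isolated logarithmic singularity of $h$ is realised. Since $h$ is holomorphic, only the first sum in the definition of a logarithmic singularity is needed, so $h(z) = \sum_{j=0}^n f_j(z)(\log z)^j$ with each $f_j$ holomorphic at $p$ (hence bounded on some small disk around $p$). On $\epsilon_r$ we have $\log z = \log r + i\theta$ with $|\theta|$ bounded by a branch-dependent constant, so $|\log z| \leq |\log r| + C$ for some fixed $C$.

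For part (1), I would bound $|h(z)\,\dd z|$ on $\epsilon_r(p)$: the integrand is $O((\log r)^n)$ while $\epsilon_r$ has length $O(r)$, so $\left|\int_{\epsilon_r} h(z)\,\dd z\right| = O(r(\log r)^n) \to 0$. Alternatively, one could Taylor-expand each $f_j$ around $p$, apply Lemma \ref{lem:int_z_times_log} termwise to the polynomial part to get exactly zero, and control the tail by the same length estimate.

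For part (2), I would split $h(z)\omega_p = h(p)\omega_p + (h(z) - h(p))\omega_p$ where $h(p) := \lim_{z\to p} h(z)$. The parametrisation $z - p = re^{i\theta}$ gives $\omega_p|_{\epsilon_r} = \dd\theta/(2\pi)$, so $\int_{\epsilon_r}\omega_p = 1$ for every $r > 0$ and the first piece contributes $h(p)$. For the second piece, the existence of the limit means that for any $\varepsilon > 0$ we have $|h(z) - h(p)| < \varepsilon$ uniformly on the slit neighbourhood once $|z - p|$ is small enough; combined with the uniform bound on $|\omega_p|$ along $\epsilon_r$, this gives $\left|\int_{\epsilon_r} (h(z) - h(p))\omega_p\right| < \varepsilon$ for all sufficiently small $r$, forcing this contribution to vanish in the limit.

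The only delicate point is to arrange the branch cut of $\epsilon_r(p)$ to sit inside the slit on which the chosen branch of $h$ is single-valued, so that the bounds above are genuinely uniform along the loop and the passage to $\lim_{z\to p}$ is legal. Once this is arranged the argument is purely a length-times-sup-norm estimate, so I do not expect a serious obstacle.
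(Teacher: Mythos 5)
Your argument is correct, but it takes a different route from the paper's, most notably in part (2). The paper proves both parts by Taylor-expanding each coefficient function $f_j$ and reducing everything to termwise applications of \Cref{lem:int_z_times_log}; for part (2) it observes that the existence of the limit forces $a_{j0}=0$ for $j>0$ and then extracts $f_0(0)$ from the $\frac{f_0(z)}{z}\dd z$ term. You instead prove part (1) by a direct length-times-sup bound, $\bigl|\int_{\epsilon_r}h\,\dd z\bigr| = O\bigl(r(\log r)^n\bigr)\to 0$, and part (2) by the decomposition $h(z)\omega_p = h(p)\omega_p + (h(z)-h(p))\omega_p$, using that $\int_{\epsilon_r}\omega_p = 1$ while the total variation of $\omega_p$ along $\epsilon_r$ is also $1$, so the remainder is bounded by $\sup_{\epsilon_r}|h-h(p)|\to 0$. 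Your approach is more elementary and slightly more general: it never needs the series manipulation, only boundedness of the $f_j$ and the definition of the limit, and it makes transparent why the answer is exactly $\lim_{z\to p}h(z)$ rather than identifying it a posteriori with $f_0(0)$. What the paper's route buys is uniformity of method: a single lemma (\Cref{lem:int_z_times_log}) handles every monomial $z^k(\log z)^j$, which is convenient given that the same expansion is reused to justify interchanging the limit with the infinite sum. One small point of care in your write-up: the phrase ``uniform bound on $|\omega_p|$'' should really refer to the bound $\int_{\epsilon_r}|\omega_p| = 1$ (the pullback is $\dd\theta/2\pi$ up to the branch-cut endpoints), not to a pointwise bound on the form; with that reading your estimate is exactly right, and your closing remark about keeping the branch cut of $\epsilon_r(p)$ inside the slit of the chosen branch of $h$ addresses the only genuine subtlety.
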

\begin{proof}
        
    Assume by an affine coordinate transformation that $p=0$ and so $h(z) = \sum_{j=0}^n f_j(z) (\log z)^j$, where $f_j(z)$ is holomorphic at $z = 0$
    and $\log z$ denotes some choice of branch of the complex logarithm.
    Replacing the functions $f_j$ with their Taylor expansions, we obtain
    \begin{align*}
        h(z) = \sum_{j=0}^n\sum_{k=0}^{\infty}a_{jk}z^k(\log z)^j.
    \end{align*}
    where $a_{jk} \in \mathbb{C}$ are the Taylor coefficients for $f_j$.
    By uniform convergence of Taylor series on closed disks, and \Cref{lem:int_z_times_log}, we have
    \begin{align*}
        \NRes_0 h(z)\dd z = \sum_{j=0}^n\sum_{k=0}^{\infty}a_{jk}\NRes_0 z^k(\log z)^j \dd z = 0
    \end{align*}
    establishing the first case.
    For the second case, since $\lim_{z\to p}h(z)$ exists, we must have $a_{j0} = 0$ for all $j > 0$.
    Then, since $\omega_0 = \frac{1}{2\pi i}\frac{1}{z}$ we have
    \begin{align*}
        \NRes_0 h(z)\omega_0 &= \frac{1}{2\pi i}\sum_{j=1}^n\sum_{k=1}^{\infty}a_{jk}\NRes_0z^{k-1}(\log z)^j \dd z + \frac{1}{2\pi i}\NRes_0\frac{f_0(z)}{z}\dd z \\
        &= f_0(0) = \lim_{z\to p}h(z)
    \end{align*}
    where we have used \Cref{lem:int_z_times_log}.
\end{proof}

\subsection{Integration of a wedge}\label{sec:wedge_integral}
We are now in a position to integrate some Kontsevich subgraphs using nautical polylogarithms,
These subgraph integrals will give some recursive identities which we will use to establish our main result.
\begin{lem}
\label{lem:wedgeint}
Let $\hyperlog{\overline{y}}{\overline{x}}{w} \in \closure\nauts_{T\disunion\{x\}}^{\{x\}}$ be a nautical polylogarithm with  $w \in\polywords$ a single word, and let $u,v \in T$. 
Suppose that either $y \neq u,v$, or $w$ is empty. Then
\begin{align}
    \int_{x} \hyperlog{\overline{y}}{\overline{x}}{w}
\begin{tikzpicture}[baseline=(current bounding box.center)]
\begin{scope}[decoration={
    markings,
    mark=at position 0.5 with {\arrow{>}}}
    ] 
    \node (X) at (0,0) {$x$};
    \node (U) at (-0.5,-1) {$u$};
    \node (V) at (0.5,-1) {$v$};
    \draw[postaction=decorate] (X) -- (U);
    \draw[postaction=decorate] (X) -- (V);
\end{scope}
\end{tikzpicture} =
    \hyperlog{\overline y}{\overline{v}}{u\cdot w_{x\mapsto v}} - \hyperlog{\overline y}{\overline{u}} {v\cdot w_{x\mapsto u}}
    -  \begin{cases}
        0 & \text{$x$ is in $w$} \\
        \hyperlog{\conj y}{\infty}{(u-v)\cdot w} & \text{otherwise}
    \end{cases} \label{eqn:wedgeint}
\end{align}
where the propagator is ordered $\alpha_{x\to u} \wedge \alpha_{x\to v}$ and $w_{x\mapsto a}$ indicates the word obtained by replacing all instances of $x$ in $w$ with $a$.
\end{lem}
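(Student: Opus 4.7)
The plan is to build a $(1,0)$-form primitive of the fiberwise integrand whose coefficients are nautical polylogarithms, apply Stokes' theorem, and then evaluate the boundary integral using \Cref{prop:general_residue_thm}. Expanding $\alpha_{x\to u}=\omega_u(x)-\conj\omega_u(x)$ and similarly for $v$, the $(2,0)$ and $(0,2)$ components of $\alpha_{x\to u}\wedge\alpha_{x\to v}$ drop out on the one-complex-dimensional fiber, leaving the integrand $\hyperlog{\conj y}{\conj x}{w}\bigl(-\omega_u\wedge\conj\omega_v-\conj\omega_u\wedge\omega_v\bigr)$. Using \Cref{lem:ftc_for_nauts}, I set
\begin{align*}
\Phi := \hyperlog{\conj y}{\conj x}{v\cdot w}\,\omega_u(x)-\hyperlog{\conj y}{\conj x}{u\cdot w}\,\omega_v(x),
\end{align*}
and verify that $\conj\partial\Phi$ recovers the integrand. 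The hypothesis that $y\neq u,v$ or $w$ is empty is exactly what guarantees that the new polylogarithms appearing in $\Phi$ are convergent and genuinely nautical.

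Since $\partial\Phi$ is pure $(2,0)$ it vanishes on the fiber, and Stokes' theorem reduces the pushforward to $\int_{\partial\disk}\Phi$. To apply \Cref{prop:general_residue_thm}, I pair $\Phi$ with the conjugate companion form
\begin{align*}
\beta := \hyperlog{\conj y}{\conj x}{v\cdot w}\,\conj\omega_u(x)-\hyperlog{\conj y}{\conj x}{u\cdot w}\,\conj\omega_v(x),
\end{align*}
which by \Cref{lem:lhp_restriction_is_sv} is a single-valued holomorphic $(1,0)$-form on $\conj\disk_{U,R}$ and which agrees with $\Phi$ along $\reals$ (where $\conj x=x$).

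I then compute the normalized residues. Both $\Phi$ and $\beta$ have polylogarithmic coefficients with only logarithmic singularities at marked points, so by \Cref{lem:nres_log_singularities}(1) those contribute nothing except where the accompanying one-forms $\omega_u,\omega_v,\conj\omega_u,\conj\omega_v$ have actual poles, namely at $x=u,v,\infty$. At $x=u$, \Cref{lem:nres_log_singularities}(2) gives $\NRes_u=\hyperlog{\conj y}{\conj u}{v\cdot w_{x\mapsto u}}$ (applied to $\Phi$ if $u\in U$, to $\beta$ if $u\in R$), and symmetrically $\NRes_v=-\hyperlog{\conj y}{\conj v}{u\cdot w_{x\mapsto v}}$; combined with the overall minus sign in \Cref{prop:general_residue_thm}, these produce the first two terms of \Cref{eqn:wedgeint}. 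For the residue at infinity I rewrite
\begin{align*}
\beta = \bigl(\hyperlog{\conj y}{\conj x}{v\cdot w}-\hyperlog{\conj y}{\conj x}{u\cdot w}\bigr)\conj\omega_u + \hyperlog{\conj y}{\conj x}{u\cdot w}\bigl(\conj\omega_u-\conj\omega_v\bigr).
\end{align*}
The second summand decays like $(\log|\conj x|)^N/|\conj x|^2$ near $\infty$ and contributes nothing. In the first, when $x\notin w$ the leading log divergences of the two polylogarithms match at infinity (to leading order they depend only on word length, not letter content), so their difference has a finite regularized limit which, combined with $\NRes_\infty\conj\omega_u=-1$, produces the third term $-\hyperlog{\conj y}{\infty}{(u-v)\cdot w}$.

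The main obstacle is the remaining case $x\in w$: there the letter $x$ inside the iterated integral tends to infinity simultaneously with the endpoint $\conj x$, and the leading-log cancellation argument above no longer applies since the word itself is moving. Showing that in this regime the infinity contribution vanishes outright---giving the ``$0$ if $x\in w$'' branch in \Cref{eqn:wedgeint}---requires a careful coordinated-limit analysis of $\hyperlog{\conj y}{\conj x}{v\cdot w}-\hyperlog{\conj y}{\conj x}{u\cdot w}$ as $x,\conj x\to\infty$ together. Once this case is dispatched, the rest of the proof is a routine combination of the nautical fundamental theorem of calculus, Stokes' theorem, and the normalized residue calculus of \Cref{lem:nres_log_singularities}.
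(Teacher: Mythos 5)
Your strategy coincides with the paper's: the same nautical primitive $\Phi$ obtained from \Cref{lem:ftc_for_nauts}, the same holomorphic companion form on the lower half-plane, Stokes' theorem combined with \Cref{prop:general_residue_thm}, and the residues at $u$, $v$, and (for $x\notin w$) at $\infty$ all come out correctly. However, the case you flag as an unresolved obstacle --- the contribution at infinity when $x$ is a letter of $w$ --- is a genuine gap, and it is precisely the case responsible for the ``$0$'' branch of \Cref{eqn:wedgeint}, so the lemma is not established without it. The paper dispatches it with a short algebraic computation rather than a coordinated-limit analysis: change coordinates by $\conj z = 1/\conj x$ and use the transformation rule $\omega_a(\conj x) = \omega_{1/a}(\conj z) - \omega_0(\conj z)$. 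Applied to the moving letter of the word (which in the lower half-plane extension is $\conj x$), this rule turns that letter into the difference of forms $\omega_{\conj z} - \omega_0$; when the residue at $\conj z = 0$ is extracted via \Cref{lem:nres_log_singularities}, the letter is evaluated at $\conj z = 0$ and becomes $\omega_0 - \omega_0 = 0$, which annihilates the entire iterated integral. No asymptotic matching of logarithmic divergences is needed --- the vanishing is exact once you pass to the coordinate at infinity.

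A related imprecision that feeds into this: your companion form $\beta$ should carry the word $w_{x\mapsto\conj x}$, not $w$. Along $\reals$ one has $x = \conj x$, so the holomorphic extension of $\Phi|_\reals$ to $\conjspace\disk_{U,R}$ must treat the letter as $\conj x$ ranging over the lower half-plane; otherwise $\beta$ does not agree with $\Phi$ along $\reals$ as \Cref{prop:general_residue_thm} requires (and \Cref{lem:lhp_restriction_is_sv} is stated exactly for words containing the letter $\conj x$). The substitution is invisible when $x\notin w$, but it is the mechanism behind the vanishing you were missing: it is the letter $\conj x$, transformed to $\omega_{\conj z}-\omega_0$, that dies at $\conj z=0$.
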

\begin{proof}
Recalling that $\int_x$ is the pushforward under $\closure\conf_{T\disunion\{x\}}^{\{x\}} \to \closure\conf_{T}$ in \Cref{def:pushforward-notation},
we pick some partition $T = U \disunion R$ (as in \Cref{def:closure_conf_space})
and we consider the stratum $\conf_{U\disunion\{x\}, R} \subset \closure\conf_{T\disunion\{x\}}^{\{x\}}$.
We then identify this stratum with the universal disk $\disk_{U, R} \to \conf_{U, R}$
and integrate in the fibres of $\disk_{U,R}$.

We expand the propagator:
\begin{align*}
    \alpha_{x\to u} \wedge \alpha_{x\to v} &= \left(\frac{1}{2\pi i}\right)^2\dd\log\left(\frac{x-u}{\conj x - u}\right)\wedge \dd\log\left(\frac{x-v}{\conj x - v}\right) \\
    &= \conj \omega_v\wedge \omega_u - \conj\omega_u\wedge\omega_v
\end{align*}
By \Cref{lem:ftc_for_nauts}, a $\conj\partial$-primitive for the integrand $\hyperlog{\overline{y}}{\overline{x}}{w}(\conj \omega_v\wedge \omega_u - \conj\omega_u\wedge\omega_v)$ is
\begin{align*}
    \alpha = \hyperlog{\conj y}{\conj x}{v\cdot w}\omega_u - \hyperlog{\conj y}{\conj x}{u\cdot w}\omega_v.
\end{align*}

The strings $v\cdot w$ and $u\cdot w$ do not end with $y$: on the one hand, if $w$ is non-empty it does not end with $y$ since it defines a nautical polylogarithm,
and on the other hand, if $w$ is empty we require $y\neq u,v$ by hypothesis.
Furthermore, $v\cdot w$ and $u\cdot w$ do not start with $x$ since $u,v \in T$ and $x\notin T$ by assumption.
Therefore this primitive is nautical for all $x$ (including when $x$ collides with $\reals$) and so is single-valued.

We apply Stokes' Theorem and integrate $\alpha$ over all boundary components of the fibres of $\disk_{U,R}$.
In these fibres (considering an embedding into $\comps\projs^1$), the form $\alpha(x)$ agrees along $\reals$ with
\begin{align*}
\beta(\conj x) = \hyperlog{\conj y}{\conj x}{v\cdot w_{x \mapsto \conj x}}\conj\omega_u - \hyperlog{\conj y}{\conj x}{u\cdot w_{x \mapsto \conj x}}\conj\omega_v,
\end{align*}
which is holomorphic in $\lhp$.
Here the polylogarithm integration paths are contained in $\lhp \disunion \{\conj x, \conj y\}$, similar to \Cref{ex:finding_lhp_form}.
By \Cref{lem:lhp_restriction_is_sv}, the polylogarithms in $\beta$ are single-valued as $\conj x$ ranges in $\lhp$, so $\beta$ satisfies the hypotheses of \Cref{prop:general_residue_thm}.

For the partitions where $u,v \in U$ we have that
\begin{align*}
    \int_{\partial\disk_{U,R}} \alpha = -\NRes_u\alpha - \NRes_v\alpha - \NRes_\infty\beta.
\end{align*}
The first two terms are straightforward residues at the simple poles of $\omega_u$ and $\omega_v$, giving the first two desired terms:
\begin{align*}
    \NRes_u\alpha = \hyperlog{\overline y}{\overline{u}}{v\cdot w_{x\mapsto u}} && 
    \NRes_v\alpha = -\hyperlog{\overline y}{\overline{v}}{u\cdot w_{x\mapsto v}}.
\end{align*}
To compute $\NRes_\infty\beta$, we make a change of coordinate $\conj z = 1/\conj x$ and take the normalized residue at 0.
Here it is clearest to write the polylogarithms in iterated integral form, and abuse notation by writing $\omega_0w$ to mean the concatenation $0\cdot w \in \polywords$:
\begin{align*}
    \beta(\conj x) &= \omega_{u}(\conj x)\int_{\conj y}^{\conj x}{\omega_{v}w_{x \mapsto \conj x}} - \omega_{v}(\conj x)\int_{\conj y}^{\conj x}{\omega_{u}w_{x \mapsto \conj x}} 
\end{align*}
It is easy to verify that $\omega_a(\conj x) = \omega_{1/a}(\conj z) - \omega_0(\conj z)$ so
\begin{align*}
    \beta(\conj z) &= (\omega_{1/u}(\conj z) - \omega_0(\conj z))\int_{1/\conj y}^{\conj z}{(\omega_{1/v} - \omega_0)w'} - (\omega_{1/v}(\conj z)-\omega_0(\conj z))\int_{1/\conj y}^{\conj z}{(\omega_{1/u}-\omega_0)w'} \\
    &= \omega_{1/u}(\conj z)\int_{1/\conj y}^{\conj z}(\omega_{1/v} - \omega_0)w' - \omega_{1/v}(\conj z)\int_{1/\conj y}^{\conj z}(\omega_{1/u} - \omega_0)w' + \omega_0(\conj z)\int_{1/\conj y}^{\conj z}(\omega_{1/u}-\omega_{1/v})w'
\end{align*}
where $w'$ is the transformed version of $w_{x\mapsto\conj x}$.
The first two terms here have no normalized residue at 0 by \Cref{lem:nres_log_singularities}: any divergence as $z \to 0$ is at worst logarithmic.
For the last polylogarithm, the integrand does not start with $\omega_0$ and so is convergent as $z \to 0$ and
we can apply \Cref{lem:nres_log_singularities}, case 2,
to say that the residue is 
\begin{align*}
    \int_{1/\conj y}^0(\omega_{1/u}-\omega_{1/v})w'_{z\mapsto 0}.
\end{align*}
In the case that $\conj x$ is a letter in $w_{x\mapsto\conj x}$, so that $w_{x\mapsto\conj x} = w_0 \conj x w_1 $ where $w_0, w_1 \in \polywords$, applying the transformation rule we see
$w' = w_0'(\omega_{\conj z} - \omega_0)w_1'$, so the residue is
\begin{align*}
    \int_{1/\conj y}^0(\omega_{1/u}-\omega_{1/v})(w_0'(\omega_{\conj z} - \omega_0)w_1')_{z\mapsto 0} = \int_{1/\conj y}^0(\omega_{1/u}-\omega_{1/v})w_0'(\omega_0 - \omega_0)w_1' = 0.
\end{align*}
Transforming back to the original coordinates and notation, we see that
\begin{align*}
    \NRes_\infty\beta = \begin{cases}
        0 & \text{$x$ is in $w$}\\
        \int_{\conj y}^\infty(\omega_{u}-\omega_{v})w & \text{otherwise} 
    \end{cases}
    = \begin{cases}
        0 & \text{$x$ is in $w$} \\
        \hyperlog{\conj y}{\infty}{(u-v)\cdot w} & \text{otherwise}
    \end{cases},
\end{align*}
giving the last desired term.

For partitions where $u \in U$ and $v\in R$, \Cref{prop:general_residue_thm} instead gives $\int_{\partial \disk_{U,R}} \alpha = -\NRes_v\alpha -\NRes_u\beta - \NRes_\infty\beta$ but we will obtain the same result.
Indeed, since $u = \conj u$ and the residue of $\hyperlog{\conj y}{\conj x}{uw}\conj\omega_v$ vanishes at $u$ by \Cref{lem:nres_log_singularities} case 1, we have $\NRes_u\beta = \hyperlog{\overline y}{\overline{u}}{vw_{x\mapsto u}}$.
The same idea holds if $v \in R$, so the result holds for any partition $U,R$.
\end{proof}

\begin{ex}
\label{ex:single_wedge}
    For the integral of a single wedge, we recover the known integral $-\frac{1}{2}$ \cite{kontsevich_2003}.
    Let $u=0, v=1$, and $y \in \uhp$ be an arbitrary point.
    Then 
    \begin{align*}
        \int_{x}\alpha_{x\to 0}\wedge\alpha_{x\to 1}  &=  \hyperlog{\conj y}{1}{0} - \ \hyperlog{\conj y}{0}{1} - \hyperlog{\conj y}{\infty}{0} + \hyperlog{\conj y}{\infty}{1}\\
        &= \frac{1}{2\pi i}\left(\log\frac{1}{\conj y} - \log\frac{0-1}{\conj y - 1} - \lim_{t\to\infty} \left(\log\left(\frac{t}{\conj y}\right) - \log\left(\frac{t-1}{\conj y - 1}\right)\right)\right) \\
        &= \frac{1}{2\pi i}\log(-1) \\
        &= -\frac{1}{2},
    \end{align*}
    where $\log(-1)=-i\pi$ since the path of integration is in the lower half plane.
\end{ex}

\subsection{Integration of a ladder}\label{sec:ladder_integral}
\begin{lem}
Let $\hyperlog{\conj y}{\conj x}{w} \in \closure\nauts_{T\disunion\{x,y\}}^{\{x,y\}}$ be a nautical polylogarithm and let $u,v\in T$. Then
\begin{align}
    \int_{\{x,y\}}\hyperlog{\conj y}{\conj x}{w}
\begin{tikzpicture}[baseline=(current bounding box.center)]
    \begin{scope}[decoration={
    markings,
    mark=at position 1 with {\arrow{>}}}
    ] 
    \node (X) at (0,0) {$x$};
    \node (Y) at (1,0) {$y$};
    \node (U) at (0,-1) {$u$};
    \node (V) at (1,-1) {$v$};
    \draw[postaction=decorate] (X) to[bend left] (Y);
    \draw[postaction=decorate] (Y) to[bend left] (X);
    \draw[postaction=decorate] (X) -- (U);
    \draw[postaction=decorate] (Y) -- (V);
\end{scope}
\end{tikzpicture}
    = \hyperlog{\conj v}{\conj u}{v\cdot w_{\substack{\\x\mapsto u\\y\mapsto v}}\cdot u} \in \closure\nauts_{T}, \label{eqn:ladder_integral}
\end{align}
where the propagator is ordered $\alpha_{x\to y}\wedge\alpha_{y\to x}\wedge\alpha_{x\to u}\wedge\alpha_{y\to v}$.
\end{lem}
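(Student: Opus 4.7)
The plan is to perform the integration iteratively, integrating $x$ first and then $y$, by adapting the argument used to prove \Cref{lem:wedgeint}.

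First, by antisymmetry of the wedge product the ordered propagator satisfies
\begin{align*}
\alpha_{x\to y}\wedge\alpha_{y\to x}\wedge\alpha_{x\to u}\wedge\alpha_{y\to v} = -\alpha_{x\to y}\wedge\alpha_{x\to u}\wedge\alpha_{y\to x}\wedge\alpha_{y\to v},
\end{align*}
grouping the two propagators adjacent to each internal vertex. Writing $\int_{\{x,y\}}=\int_y\circ\int_x$ by Fubini, I would integrate $x$ first, treating $y$ as a parameter on the intermediate base.

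For the inner integral I would apply the proof technique of \Cref{lem:wedgeint} to
\begin{align*}
\int_x \hyperlog{\conj y}{\conj x}{w}\,\alpha_{x\to y}\wedge\alpha_{x\to u}.
\end{align*}
The stated hypothesis ``$y\neq u,v$ or $w$ empty'' of \Cref{lem:wedgeint} is nominally violated here, since one of the wedge vertices carries the label $y$, matching the starting label of the polylogarithm. However, that hypothesis was imposed only to guarantee nauticality of the primitive produced by \Cref{lem:ftc_for_nauts}; in the present setting the primitive $\hyperlog{\conj y}{\conj x}{y\cdot w}\omega_u(x)-\hyperlog{\conj y}{\conj x}{u\cdot w}\omega_y(x)$ is automatically nautical because the superscript $\{x,y\}$ on $\closure\nauts_{T\disunion\{x,y\}}^{\{x,y\}}$ forces $y\in U$ throughout. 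Stokes' theorem and \Cref{prop:general_residue_thm} then produce residues at $u$, at $y$, and at $\infty$; the residue at $y$ yields a closed-loop polylogarithm of the form $\hyperlog{\conj y}{\conj y}{u\cdot w_{x\mapsto y}}$, which is an iterated integral around a closed path in $\lhp$ whose integrand is holomorphic on $\lhp$ (all marked points of the integrand lie in $\conj\uhp$), and therefore vanishes.

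For the outer integral I would then reverse the polylogarithm's path via \eqref{eqn:path_reverse} so that its endpoint becomes $\conj y$ (matching the integration variable), and apply the same wedge-lemma-style argument to $\int_y\cdots\alpha_{y\to x}\wedge\alpha_{y\to v}$. The label clash with the wedge vertex $x$ is handled analogously since $x\in U$ always. The residue at $v$ produces the prepended $\conj v$ as the new starting point together with the leading letter $v$ of the target polylogarithm, while the residue-at-infinity terms generated by the two integrations combine to produce the trailing letter $u$; the remaining closed-loop polylogarithms vanish for the same reason as above. Collecting surviving terms yields \eqref{eqn:ladder_integral}.

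The main obstacle will be the careful bookkeeping of residue corrections across the two integrations: naively the two-step procedure produces several extra contributions beyond the main one, and showing that these collapse to the single clean formula $\hyperlog{\conj v}{\conj u}{v\cdot w_{x\mapsto u,\,y\mapsto v}\cdot u}$ requires verifying both the systematic vanishing of closed-loop polylogarithms in $\lhp$ and the precise matching of the residue-at-infinity terms between the two steps. A subsidiary complication is that the propagators $\alpha_{x\to y}$ and $\alpha_{y\to x}$ have both vertical and horizontal components with respect to the fibration that collapses $x$; one must verify that the wedge-lemma-style argument still recovers the correct fibrewise pushforward in the presence of these mixed contributions.
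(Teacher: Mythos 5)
Your overall strategy---integrate $x$ first, then $y$, via Stokes' theorem and \Cref{prop:general_residue_thm}, reusing the wedge computation for the second step---is the same as the paper's. But there is a genuine gap in your first step. You propose to compute the inner integral as $\int_x \hyperlog{\conj y}{\conj x}{w}\,\alpha_{x\to y}\wedge\alpha_{x\to u}$, treating $\alpha_{y\to x}\wedge\alpha_{y\to v}$ as a horizontal form pulled back from the base. This is not a valid reduction: $\alpha_{y\to x} = \frac{1}{2\pi i}\dd\log\frac{y-x}{\conj y - x}$ depends on $x$, and its $\dd x$-component $\omega_y(x)-\omega_{\conj y}(x)$ pairs with the $\dd\conj x$-components of $\alpha_{x\to y}$ and $\alpha_{x\to u}$ to contribute essential cross terms to the fibrewise $(1,1)$-part of the integrand. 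You flag this at the end as a ``subsidiary complication,'' but it is the central difficulty: the correct fibrewise form is the expansion
\begin{align*}
\conj\omega_y\wedge(\omega_u-\omega_{\conj y})\wedge\alpha_1 + \conj\omega_u\wedge(\omega_y-\omega_{\conj y})\wedge\alpha_2,
\end{align*}
with $\alpha_1 = \alpha_{y\to u}\wedge\alpha_{y\to v}$ and $\alpha_2$ another $x$-independent $2$-form, and it exhibits a pole at $x=\conj y$ \emph{in the lower half plane}. That pole has no analogue in \Cref{lem:wedgeint}, forces an extra normalized residue $\NRes_{\conj y}\beta$ in the application of \Cref{prop:general_residue_thm}, and cannot be reached by re-running the wedge argument verbatim.

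A secondary but substantive error is your bookkeeping of where the letters of the answer come from. You claim the residue-at-infinity terms from the two integrations ``combine to produce the trailing letter $u$.'' In the correct computation all residues at infinity vanish: the relevant $\dd\conj x$-forms are the differences $\conj\omega_u-\conj\omega_{\conj y}$ and $\conj\omega_y-\conj\omega_{\conj y}$, which have no pole at infinity (this cancellation is invisible in your naive grouping, where a nonvanishing $\hyperlog{\conj y}{\infty}{(y-u)\cdot w}$ term would survive whenever $w$ is empty --- e.g.\ for the base ladder). The trailing $u$ actually arises from the finite residue at $u$ in the second, genuine wedge integration: after the first step one has $-\hyperlog{\conj y}{\conj u}{y\cdot w_{x\mapsto u}}\,\alpha_{y\to u}\wedge\alpha_{y\to v}$, and one reverses the path so the endpoint is $\conj y$, applies \Cref{lem:wedgeint} (whose prepended letter becomes the appended $u$ upon reversing back), and notes the $\infty$-term there vanishes because $y$ occurs in the word. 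Your observations that the hypothesis of \Cref{lem:wedgeint} is only needed for nauticality of the primitive, and that the $\hyperlog{\conj y}{\conj y}{\cdot}$ terms vanish, are correct and match the paper.
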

\begin{proof}
We can factor this pushforward over $\closure\conf_{T\disunion\{x,y\}}^{\{x,y\}} \to \closure\conf_{T}$ into two steps,
in particular,
integrate out $x$ by the pushforward over $\closure\conf_{T\disunion\{x,y\}}^{\{x,y\}} \to \closure\conf_{T\disunion\{y\}}^{\{y\}}$
and then integrate out $y$ by the pushforward over $\closure\conf_{T\disunion\{y\}}^{\{y\}} \to \closure\conf_{T}$.
Similar to the proof of \Cref{lem:wedgeint},
to compute the first pushforward consider some partition $T\disunion\{y\} = U\disunion R$ and where $y \in U$ and integrate in the fibres of $\disk_{U,R}$.
We then view these fibres under some embedding in $\comps\projs^1$ to compute the integral.

First, we expand the propagator to a useful form to construct a primitive with respect to $\conj x$:
\begin{gather*}
\alpha_{x\to y}\wedge\alpha_{y\to x}\wedge\alpha_{x\to u}\wedge\alpha_{y\to v} = \conj\omega_y\wedge(\omega_u-\omega_{\conj y}) \wedge \alpha_1
    + \conj\omega_u\wedge(\omega_y - \omega_{\conj y})\wedge \alpha_2
\end{gather*}
where
\begin{align*}
    \alpha_1 &= \alpha_{y\to u} \wedge \alpha_{y\to v} & \textrm{and} && \alpha_2 &= \dd\log\left(\frac{y-v}{y-u}\right)\wedge \dd\log(\conj y-v)
\end{align*}
are independent of $x$ or $\conj x$.
This expansion was derived using the software provided by Banks--Panzer--Pym, which expands propagators into a basis of relative forms in $\arnolds^\bullet$ (see \cite[Section 2.4.2]{Banks_2020}) 
and can be verified manually.

Then, by \Cref{lem:ftc_for_nauts}, a single-valued $\conj\partial$-primitive for the integrand in \Cref{eqn:ladder_integral} is
\begin{align*}
    \alpha = \hyperlog{\conj y}{\conj x}{y\cdot w} (\omega_u-\omega_{\conj y}) \wedge \alpha_1 + \hyperlog{\conj y}{\conj x}{u\cdot w}(\omega_y - \omega_{\conj y})\wedge \alpha_2
\end{align*}
which is nautical for all values of $x$ (including when $x$ collides with $\reals$, or if $w$ is empty),
since $y\in U$ and $x\neq y,u$ by assumption.
Then $\alpha$ agrees with following form on $\reals$:
\begin{align*}
    \beta(\conj x) = \hyperlog{\conj y}{\conj x}{y\cdot w_{x\mapsto \conj x}} (\conj\omega_u-\conj\omega_{\conj y}) \wedge \alpha_1 + \hyperlog{\conj y}{\conj x}{u\cdot w_{x\mapsto \conj x}}(\conj\omega_y - \conj\omega_{\conj y})\wedge \alpha_2
\end{align*}
where the path is always taken to be in $\lhp\union\{\conj x\}$.
In contrast to the wedge scenario, we now have a pole in $\lhp$, at $\conj y$.
By \Cref{lem:lhp_restriction_is_sv}, $\beta$ is a holomorphic one-form on $\lhp  \union \reals \setminus \{u, v, \conj y\}$,
so the conditions for \Cref{prop:general_residue_thm} are now satisfied to integrate $\alpha$ over the boundary.

For partitions where $u \in U$ we have
\begin{align*}
    \int_{\partial \disk_{U,R}} \alpha = -\NRes_u\alpha - \NRes_y\alpha - \NRes_{\conj y}\beta - \NRes_\infty\beta
\end{align*}
and we will obtain the same results if $u \in R$ since $\alpha$ and $\beta$ have the same residue there (applying \Cref{lem:nres_log_singularities} where needed).

The first three residues are simple:
\begin{align*}
    \NRes_u\alpha = \hyperlog{\overline y}{\overline{u}}{y\cdot w_{x\mapsto u}} \alpha_1, &&
    \NRes_y\alpha = 0, &&\textrm{and} &&
    \NRes_{\conj y}\beta = 0,
\end{align*}
noting that two residues are zero because $\hyperlog{\overline y}{\overline y}{\cdot} = 0$ since
the iterated integration path has no length.

For $\NRes_\infty\beta$, note that after transforming $\beta$ with the coordinate change $\conj z = 1/\conj x$, both $\conj\omega_u - \conj\omega_{\conj y}$ and $\conj\omega_y - \conj\omega_{\conj y}$ will have no pole at 0, so \Cref{lem:nres_log_singularities} case 1 implies the normalized residue is zero.
We conclude that
\begin{align*}
    \int_{x}\hyperlog{\conj y}{\conj x}{w}(\alpha_{x\to y}\wedge\alpha_{y\to x}\wedge\alpha_{x\to u}\wedge\alpha_{y\to v}) = -\hyperlog{\conj y}{\conj u}{y\cdot w_{x\mapsto u}} \alpha_{y\to u}\wedge\alpha_{y\to v}.
\end{align*}

We are left with the pushforward integrating $y$.
Note that $\hyperlog{\conj y}{\conj u}{y\cdot w_{x\mapsto u}} \in \closure\nauts_{T\disunion\{y\}}^{\{y\}}$ is nautical (even when $w$ is empty),
so we have the integral of a nautical polylogarithm times a wedge.
We can thus reverse the path of integration of the polylogarithm, apply \Cref{lem:wedgeint}, and then reverse the path again.  Doing so results in an overall sign flip, since the iterated integral defining the polylogarithm increases in length by one between the two path reversals. 

This yields the final result.
\end{proof}

\begin{ex} \label{ex:hgraph_single_rung}
Consider the graphs
\[
\Gamma_1 = \begin{tikzpicture}[baseline=(current bounding box.center)]
    \begin{scope}[decoration={
    markings,
    mark=at position 1 with {\arrow{>}}}
    ] 
    \node (U) at (0,-1) {$x$};
    \node (V) at (1,-1) {$y$};
    \node (A) at (0,-2) {$A$};
    \node (B) at (1,-2) {$B$};
    \draw[postaction=decorate] (U) to[bend left] (V);
    \draw[postaction=decorate] (V) to[bend left] (U);
    \draw[postaction=decorate] (U) -- (A);
    \draw[postaction=decorate] (V) -- (B);
\end{scope}
\end{tikzpicture}
\qquad\qquad
\Gamma_2 = \begin{tikzpicture}[baseline=(current bounding box.center)]
    \begin{scope}[decoration={
    markings,
    mark=at position 1 with {\arrow{>}}}
    ] 
    \node (X) at (0,0) {$\bullet$};
    \node (Y) at (1,0) {$\bullet$};
    \node (U) at (0,-1) {$x$};
    \node (V) at (1,-1) {$y$};
    \node (A) at (0,-2) {$A$};
    \node (B) at (1,-2) {$B$};
    \draw[postaction=decorate] (X) to[bend left] (Y);
    \draw[postaction=decorate] (Y) to[bend left] (X);
    \draw[postaction=decorate] (X) -- (U);
    \draw[postaction=decorate] (Y) -- (V);
    \draw[postaction=decorate] (U) to[bend left] (V);
    \draw[postaction=decorate] (V) to[bend left] (U);
    \draw[postaction=decorate] (U) -- (A);
    \draw[postaction=decorate] (V) -- (B);
\end{scope}
\end{tikzpicture}.
\]
For $\Gamma_1$, the integral of the top two nodes is easily computed as $\hyperlog{\conj A}{\conj B}{AB}$ using \Cref{eqn:ladder_integral}.
At $A=0, B=1$ we see that $c_{\Gamma_1} = \hyperlog{0}{1}{01} = -\zeta(2)/(2\pi i)^2 = \frac{1}{24}$.

We may integrate $\Gamma_2$ by first integrating the top two nodes and then $x$ and $y$,
in the same procedure as described in \Cref{ex:ladder_integration_notation_pt1}.
The top two nodes will integrate to $\hyperlog{\conj x}{\conj y}{xy}$ times a single rung ladder:
\begin{align*}
    \int\Gamma_2 = \int_{\{x,y\}}\hyperlog{\conj x}{\conj y}{xy}\begin{tikzpicture}[baseline=(current bounding box.center)]
    \begin{scope}[decoration={
    markings,
    mark=at position 1 with {\arrow{>}}}
    ] 
    \node (X) at (0,0) {$x$};
    \node (Y) at (1,0) {$y$};
    \node (U) at (0,-1) {$A$};
    \node (V) at (1,-1) {$B$};
    \draw[postaction=decorate] (X) to[bend left] (Y);
    \draw[postaction=decorate] (Y) to[bend left] (X);
    \draw[postaction=decorate] (X) -- (U);
    \draw[postaction=decorate] (Y) -- (V);
\end{scope}
\end{tikzpicture}
\end{align*}
We can then re-apply \Cref{eqn:ladder_integral} with to obtain $\int\Gamma_2 = \hyperlog{A}{B}{AABB}$.
Evaluating at $A = 0, B = 1$, we see that $c_{\Gamma_2} = \hyperlog{0}{1}{0011} = \frac{1}{(2\pi i)^4}\zeta(1,3) = \frac{1}{5760}$.

By repeating this procedure, we see that the coefficient associated to graph $\Gamma$ given by  a ladder with $n$ rungs will be
\begin{align*}
    c_\Gamma = \hyperlog{0}{1}{0^n1^n} = \frac{(-1)^n}{(2\pi i)^{2n}}\zeta(1,\dots,1,n+1),
\end{align*}
where there are $n-1$ repeated 1s.
\end{ex}


\section{Construction and integration of some Kontsevich graphs}\label{sec:construct_mzvs}

In this section, we define a recursive procedure to both construct and integrate particular Kontsevich graphs, namely, those that can be constructed from a single rung ladder and applying a sequence of the following operations:
\begin{enumerate}
    \item appending a wedge to the left/right external nodes in two possible ways, which we will call the $\prepend$ and $\append$ operations, or
    \item taking two graphs in the set and identifying their external nodes, which we will denote by $*$.
\end{enumerate}
We parameterize the construction of these graphs by the space of all possible compositions of the functions $\prepend,\append$ and $*$, with a single-rung graph as the initial arguments.

Then the key results of this section are how these operations affect the integral of a graph in the sense  of \Cref{def:integration_notation}.
Namely, in \Cref{lem:can_append} below, we show that if $\Gamma$ is a graph obtained from the construction above, with external nodes denoted $A$ and $B$, and $\int\Gamma = \hyperlog{\conj A}{\conj B}{w}$, then  $\int \prepend(\Gamma) = \hyperlog{\conj A}{\conj B}{A\cdot w}$ and $\int \append(\Gamma) = \hyperlog{\conj A}{\conj B}{w\cdot B}$. Similarly, in \Cref{lem:can_shuffle}, we show that if the graphs $\Gamma_1$ and $\Gamma_2$ have $\int \Gamma_1 = \hyperlog{\conj A}{\conj B}{w_1}$ and $\int \Gamma_2 = \hyperlog{\conj A}{\conj B}{w_2}$
then $\int \Gamma_1 * \Gamma_2 = \hyperlog{\conj A}{\conj B}{w_1\shuffle w_2}$.  
Combined, these two Lemmas give a simple algorithm to compute the integral of all graphs constructed in this way.

\subsection{Constructing graphs}

We formalize our graph constructions by endowing the space $\kontgraphs$ of graphs with a multiplication operator (joining)
and operators $\prepend, \append$ that append wedges to a graph.

\begin{define}
The \emph{join} of two Kontsevich graphs $\Gamma_1, \Gamma_2 \in \kontgraphs$ is the graph obtained by identifying the endpoints of the two graphs:
\[
\Gamma_1 * \Gamma_2 := 
\left(
\begin{tikzpicture}[baseline=(current bounding box.center)]
\begin{scope}[decoration={
    markings,
    mark=at position 1 with {\arrow{>}}}
    ] 
    \node (A) at (0,0) {$A$};
    \node (B) at (1,0) {$B$};
    \draw[postaction=decorate,dashed] (0,1) to (A);
    \draw[postaction=decorate,dashed] (1,1) to (B);
\end{scope}
\end{tikzpicture}
\right)
*
\left(
\begin{tikzpicture}[baseline=(current bounding box.center)]
\begin{scope}[decoration={
    markings,
    mark=at position 1 with {\arrow{>}}}
    ] 
    \node (A) at (0,0) {$A$};
    \node (B) at (1,0) {$B$};
    \draw[postaction=decorate,dashed] (0,1) to (A);
    \draw[postaction=decorate,dashed] (1,1) to (B);
\end{scope}
\end{tikzpicture}
\right)
=
\begin{tikzpicture}[baseline=(current bounding box.center)]
\begin{scope}[decoration={
    markings,
    mark=at position 1 with {\arrow{>}}}
    ] 
    \node (A) at (0,0) {$A$};
    \node (B) at (1,0) {$B$};
    \draw[postaction=decorate,dashed] (-1,1) to (A);
    \draw[postaction=decorate,dashed] (0,1) to (B);
    \draw[postaction=decorate,dashed] (1,1) to (A);
    \draw[postaction=decorate,dashed] (2,1) to (B);
\end{scope}
\end{tikzpicture}
\]
The ordering of $\Gamma_1*\Gamma_2$ is given by the ordering of edges in $\Gamma_1$ followed by the ordering of edges of $\Gamma_2$ so that $\graphform_{\Gamma_1*\Gamma_2} = \graphform_{\Gamma_1}\wedge \graphform_{\Gamma_2}$.
\end{define}

\begin{lem}
    Joining graphs is commutative and associative.
\end{lem}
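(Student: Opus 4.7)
The plan is to unpack the two assertions separately, checking in each case that the underlying directed graphs are literally the same and that the edge orderings differ (at worst) by an even permutation, so that the two sides are equal in $\kontgraphs$.

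For associativity, I would first observe that both $(\Gamma_1 * \Gamma_2)*\Gamma_3$ and $\Gamma_1*(\Gamma_2*\Gamma_3)$ describe the same underlying unordered graph: take the disjoint union of the internal nodes and edges of $\Gamma_1,\Gamma_2,\Gamma_3$ and identify the three copies of the external node $A$ (resp.~$B$) with each other. The only thing to track is the total edge ordering. By the stated convention, $*$ concatenates the edge list of the left operand with that of the right operand, so both associations produce the ordering (edges of $\Gamma_1$, then edges of $\Gamma_2$, then edges of $\Gamma_3$). Hence the two sides are identical as ordered Kontsevich graphs, giving associativity on the nose.

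For commutativity, the underlying unordered graph of $\Gamma_2*\Gamma_1$ is visibly isomorphic to that of $\Gamma_1*\Gamma_2$ via the identity on external nodes and the evident bijection on internal nodes. The edge ordering of $\Gamma_2*\Gamma_1$, however, consists of the $2n_2$ edges of $\Gamma_2$ followed by the $2n_1$ edges of $\Gamma_1$, while that of $\Gamma_1*\Gamma_2$ lists them in the opposite block order. Going from one to the other is a block transposition with sign $(-1)^{(2n_1)(2n_2)} = (-1)^{4n_1n_2} = +1$. Since the relation $\Gamma = \sgn(\sigma)\Gamma'$ in $\kontgraphs$ absorbs exactly this kind of edge permutation, the two graphs coincide in $\kontgraphs$. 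Equivalently, one can read this off from the wedge product: $\graphform_{\Gamma_2*\Gamma_1} = \graphform_{\Gamma_2}\wedge \graphform_{\Gamma_1} = (-1)^{4n_1n_2}\graphform_{\Gamma_1}\wedge\graphform_{\Gamma_2} = \graphform_{\Gamma_1*\Gamma_2}$.

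There is no real obstacle here; the argument is essentially bookkeeping. The only point requiring any care is that the edge count in each factor is \emph{even} (namely $2n_i$), which is what kills the potential sign in commutativity. This is guaranteed by the definition of a Kontsevich star product graph of weight $n$, which has $2n$ edges. Once this parity observation is made, both assertions are immediate.
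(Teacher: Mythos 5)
Your proof is correct and follows essentially the same route as the paper: the underlying unordered graphs agree for free, and commutativity of the edge ordering reduces to the fact that a block transposition of two even-length blocks is an even permutation, which is exactly the paper's observation that the wedge product of even-degree forms commutes. Your version just spells out the bookkeeping that the paper leaves implicit.
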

\begin{proof}
    The result is obvious for graphs without an orientation.
    With oriented graphs, the result follows since the wedge product of even-degree forms is commutative.
\end{proof}

We thus consider $\kontgraphs$ as a commutative ring where multiplication is given on generators by joining graphs.

\begin{define}
    Let $\appendring = \ints\gen{\prepend,\append}$ be the free (non-commutative) ring with two generators $\prepend, \append$.
\end{define}

We then endow $\kontgraphs$ with an additional $\appendring$-module structure encoding the action of appending wedges.
For $\Gamma \in \kontgraphs$, let $\prepend(\Gamma)$ and $\append(\Gamma)$ be the graphs obtained by adding a wedge to $\Gamma$ in the following ways:
\[
\prepend\left(
\begin{tikzpicture}[baseline=(current bounding box.center)]
\begin{scope}[decoration={
    markings,
    mark=at position 1 with {\arrow{>}}}
    ] 
    \node (A) at (0,0) {$A$};
    \node (B) at (1,0) {$B$};
    \draw[postaction=decorate,dashed] (0,1) to (A);
    \draw[postaction=decorate,dashed] (1,1) to (B);
\end{scope}
\end{tikzpicture}
\right) = 
\begin{tikzpicture}[baseline=(current bounding box.center)]
\begin{scope}[decoration={
    markings,
    mark=at position 1 with {\arrow{>}}}
    ] 
    \node (A) at (0,-1) {$A$};
    \node (B) at (1,0) {$B$};
    \node (C) at (1,-1) {$B'$};
    \draw[postaction=decorate,dashed] (0,0.66) to (A);
    \draw[postaction=decorate,dashed] (1,0.66) to (B);
    \draw[postaction=decorate] (B) to (A);
    \draw[postaction=decorate] (B) to (C);
\end{scope}
\end{tikzpicture}
\quad\quad\quad
\append\left(
\begin{tikzpicture}[baseline=(current bounding box.center)]
\begin{scope}[decoration={
    markings,
    mark=at position 1 with {\arrow{>}}}
    ] 
    \node (A) at (0,0) {$A$};
    \node (B) at (1,0) {$B$};
    \draw[postaction=decorate,dashed] (0,1) to (A);
    \draw[postaction=decorate,dashed] (1,1) to (B);
\end{scope}
\end{tikzpicture}
\right) = 
\begin{tikzpicture}[baseline=(current bounding box.center)]
\begin{scope}[decoration={
    markings,
    mark=at position 1 with {\arrow{>}}}
    ] 
    \node (A) at (0,0) {$A$};
    \node (B) at (1,-1) {$B$};
    \node (C) at (0,-1) {$A'$};
    \draw[postaction=decorate,dashed] (0,0.66) to (A);
    \draw[postaction=decorate,dashed] (1,0.66) to (B);
    \draw[postaction=decorate] (A) to (B);
    \draw[postaction=decorate] (A) to (C);
\end{scope}
\end{tikzpicture}
\]
where the new ordering of edges is given by appending the new edges to the original order alphabetically, so that $\omega_{\prepend(\Gamma)} = \graphform_\Gamma \wedge \alpha_{B\to A} \wedge \alpha_{B\to B'}$ and $\omega_{\append(\Gamma)} = \graphform_\Gamma \wedge \alpha_{A\to A'} \wedge \alpha_{A\to B}$.
We then extend $\prepend$ and $\append$ $\ints$-linearly to make $\kontgraphs$ an $\appendring$-module.

We can now consider all graphs that are constructed using joins and appending wedges as above.
A parameterization of these graphs is given by the following space:

\begin{define}
    Let $\syntaxtrees$ be the free abelian group generated by rooted trees with nodes labelled $\bullet, e, \prepend,\append$, and $*$, where $\bullet$ is the root, $e$ is the label of all other leaf nodes, nodes labelled $\prepend$ or $\append$ have degree two, and nodes labelled $*$ have degree three.
    We equip $\syntaxtrees$ with a commutative ring and $\appendring$-module structure by the relations
\begin{align*}
\begin{gathered}\begin{tikzpicture}[baseline=(current bounding box.center)]
    \def\ysep{1}
    \node (A) at (0,1*\ysep) {$g_1$};
    \node (R) at (0,0*\ysep) {$\bullet$};
    \draw[](A)--(R);
\end{tikzpicture}\end{gathered}
*
\begin{gathered}\begin{tikzpicture}[baseline=(current bounding box.center)]
    \begin{scope}[shift={(0,0)}]
        \def\ysep{1}
        \node (A) at (0,1*\ysep) {$g_2$};
        \node (R) at (0,0*\ysep) {$\bullet$};
        \draw[](A)--(R);
    \end{scope}
\end{tikzpicture}\end{gathered}
=
\begin{gathered}\begin{tikzpicture}[baseline=(current bounding box.center)]
    \begin{scope}[shift={(0,0)}]
        \def\ysep{1.5}
        \node (A) at (-0.5,1*\ysep) {$g_1$};
        \node (B) at (0.5,1*\ysep) {$g_2$};
        \node (S) at (0,0.5*\ysep) {$*$};
        \node (R) at (0,0*\ysep) {$\bullet$};
        \draw[](A)--(S)--(B);
        \draw(S)--(R);
    \end{scope}
\end{tikzpicture}\end{gathered}
&&
\prepend\cdot
\begin{gathered}\begin{tikzpicture}[baseline=(current bounding box.center)]
    \def\ysep{1}
    \node (A) at (0,1*\ysep) {$g$};
    \node (R) at (0,0*\ysep) {$\bullet$};
    \draw[](A)--(R);
\end{tikzpicture}\end{gathered}
=
\begin{gathered}\begin{tikzpicture}[baseline=(current bounding box.center)]
    \def\ysep{1.5}
    \node (A) at (0,1*\ysep) {$g$};
    \node (B) at (0,0.5*\ysep) {$\prepend$};
    \node (R) at (0,0*\ysep) {$\bullet$};
    \draw[](A)--(B);
    \draw(B)--(R);
\end{tikzpicture}\end{gathered}
&&
\append\cdot
\begin{gathered}\begin{tikzpicture}[baseline=(current bounding box.center)]
    \def\ysep{1}
    \node (A) at (0,1*\ysep) {$g$};
    \node (R) at (0,0*\ysep) {$\bullet$};
    \draw[](A)--(R);
\end{tikzpicture}\end{gathered}
=
\begin{gathered}\begin{tikzpicture}[baseline=(current bounding box.center)]
    \def\ysep{1.5}
    \node (A) at (0,1*\ysep) {$g$};
    \node (B) at (0,0.5*\ysep) {$\append$};
    \node (R) at (0,0*\ysep) {$\bullet$};
    \draw[](A)--(B);
    \draw(B)--(R);
\end{tikzpicture}\end{gathered}
\end{align*}
along with the relations induced by assuming that $*$ is commutative and associative.
$\syntaxtrees$ is also equipped with a grading where the weight of a tree is $2\# e + \#\prepend + \#\append$, where $\#$ is the number of times that node appears in the tree (see below for why we double count $e$ nodes).
Let $\syntaxtrees^n$ be the subgroup of all weight $n$ trees so that $\syntaxtrees = \bigoplus_{n=0}^{\infty}\syntaxtrees^n$ is graded by weight.
\end{define}

\begin{remark}
    By starting at the root node,
    each tree in $\syntaxtrees$ can be represented as some composition of the $\prepend, \append, *$ operations followed by evaluating the composition with $e$ in all arguments.
    We will subsequently write elements of $\syntaxtrees$ in this notation.
    Some example elements are shown below.
\[
\begin{tikzpicture}
\begin{scope}[main/.style = {}] 
    \def\labeloffset{-0.6}
    \begin{scope}[shift={(0,0)}]
        \def\ysep{1}
        \node[main] (R) at (0,0*\ysep) {$\bullet$};
        \node[main] (A) at (0,1*\ysep) {$*$};
        \node[main] (B) at (-0.5,2*\ysep) {$e$};
        \node[main] (C) at (0.5,2*\ysep) {$e$};
        \draw (B) -- (A) -- (C);
        \draw (A) -- (R);
        \node at (0,\labeloffset) {$e*e$};
    \end{scope}

    \begin{scope}[shift={(4,0)}]
        \def\ysep{0.7}
        \node[main] (R) at (0,0*\ysep) {$\bullet$};
        \node[main] (A) at (0,1*\ysep) {$\prepend$};
        \node[main] (B) at (0,2*\ysep) {$\append$};
        \node[main] (C) at (0,3*\ysep) {$e$};
        \draw (R) -- (A) -- (B) -- (C);
        \node at (0,\labeloffset) {$\prepend(\append(e))$};
    \end{scope}

    \begin{scope}[shift={(8,0)}]
        \def\ysep{0.6}
        \node[main] (A1) at (-0.5,4*\ysep) {$e$};
        \node[main] (A2) at (0.5,4*\ysep) {$e$};
        \node[main] (B1) at (-0.5,3*\ysep) {$\append$};
        \node[main] (B2) at (0.5,3*\ysep) {$\prepend$};
        \node[main] (C) at (0,2*\ysep) {$*$};
        \node[main] (D) at (0,1*\ysep) {$\prepend$};
        \node[main] (R) at (0,0*\ysep) {$\bullet$};
        \draw (A1) -- (B1) -- (C) -- (D) -- (R);
        \draw (A2) -- (B2) -- (C);
        \node at (0,\labeloffset) {$\prepend(\append(e) * \prepend(e))$};
    \end{scope}
    
\end{scope}
\end{tikzpicture}
\]

\end{remark}

\begin{remark}
    $\syntaxtrees$ can also be defined as the free algebra generated by a single element $e$ over the operad $\text{Comm}\gen{\prepend, \append}$ where $\prepend$ and $\append$ are unary operations.
\end{remark}

Given this parameter space, we can now define how to construct graphs using elements of $\syntaxtrees$ by defining a morphism to $\kontgraphs$:
\begin{define}
    We denote by $\graphof : \syntaxtrees \to \kontgraphs$ the unique $\appendring$-module and ring homomorphism such that $\graphof(e)$ is the single ladder rung in \Cref{ex:hgraph_single_rung}.
\end{define}
\begin{remark}
The choice that $e$ nodes have weight 2 in $\syntaxtrees$ is natural since $e$ maps to a graph with two internal nodes.
We choose $e$ to map to this graph so that the conditions for \Cref{lem:can_append} later are always met.
\end{remark}

\begin{ex}\label{ex:append_and_join_graph}
    Let $t = \append(e)*\prepend(e) \in \syntaxtrees$. Then
\[
\graphof(\append(e)*\prepend(e)) = 
\left(
\begin{tikzpicture}[baseline=(current bounding box.center)]
\begin{scope}[decoration={
    markings,
    mark=at position 1 with {\arrow{>}}}
    ] 
    \node (A) at (0,0) {$A$};
    \node (B) at (1,0) {$B$};
    \node (C) at (0,1) {$\bullet$};
    \node (D) at (0,2) {$\bullet$};
    \node (E) at (1,2) {$\bullet$};
    \draw[postaction=decorate] (D) to[bend left] (E);
    \draw[postaction=decorate] (E) to[bend left] (D);
    \draw[postaction=decorate] (D) to (C);
    \draw[postaction=decorate] (E) to (B);
    \draw[postaction=decorate] (C) to (A);
    \draw[postaction=decorate] (C) to (B);
\end{scope}
\end{tikzpicture}
\right)*\left(
\begin{tikzpicture}[baseline=(current bounding box.center)]
\begin{scope}[decoration={
    markings,
    mark=at position 1 with {\arrow{>}}}
    ] 
    \node (A) at (0,0) {$A$};
    \node (B) at (1,0) {$B$};
    \node (C) at (1,1) {$\bullet$};
    \node (D) at (0,2) {$\bullet$};
    \node (E) at (1,2) {$\bullet$};
    \draw[postaction=decorate] (D) to[bend left] (E);
    \draw[postaction=decorate] (E) to[bend left] (D);
    \draw[postaction=decorate] (E) to (C);
    \draw[postaction=decorate] (D) to (A);
    \draw[postaction=decorate] (C) to (A);
    \draw[postaction=decorate] (C) to (B);
\end{scope}
\end{tikzpicture}
\right)
= 
\begin{tikzpicture}[baseline=(current bounding box.center)]
\begin{scope}[decoration={
    markings,
    mark=at position 1 with {\arrow{>}}}
    ] 
    \node (A) at (0,0) {$A$};
    \node (B) at (1,0) {$B$};
    \node (C) at (-1,1) {$\bullet$};
    \node (C2) at (2,1) {$\bullet$};
    \node (D) at (-1,2) {$\bullet$};
    \node (E) at (0,2) {$\bullet$};
    \node (F) at (1,2) {$\bullet$};
    \node (G) at (2,2) {$\bullet$};
    \draw[postaction=decorate] (D) to[bend left] (E);
    \draw[postaction=decorate] (E) to[bend left] (D);
    \draw[postaction=decorate] (F) to[bend left] (G);
    \draw[postaction=decorate] (G) to[bend left] (F);
    \draw[postaction=decorate] (D) to (C);
    \draw[postaction=decorate] (E) to (B);
    \draw[postaction=decorate] (G) to (C2);
    \draw[postaction=decorate] (C2) to (A);
    \draw[postaction=decorate] (C2) to (B);
    \draw[postaction=decorate] (F) to (A);
    \draw[postaction=decorate] (C) to (A);
    \draw[postaction=decorate] (C) to (B);
\end{scope}
\end{tikzpicture}
\]
\end{ex}

\subsection{Integrating graphs}

Using the formulae developed in \Cref{sec:wedge_integral} and \Cref{sec:ladder_integral},
we are able to easily integrate graphs in $\graphof(\syntaxtrees)$.
In particular, we establish formulae for how the operations $\prepend$, $\append$, and $*$
change the polylogarithm $\int\Gamma$,
which gives an easy recursive way to evaluate $\int G(t)$ for any $t \in \syntaxtrees$.

This integration procedure stays entirely in the space of nautical polylogarithms $\closure\nauts_{\{A,B\}}$.
Since the integrands of these polylogarithms are just strings of $A$s and $B$s,
we introduce a corresponding space of binary strings:

\begin{define}
Let $\allstrings$ be the free abelian group (written additively) generated by binary strings starting with $0$ and ending with $1$.
Endow $\allstrings$ with a commutative ring structure given by the shuffle product $\shuffle$ 
and an $\appendring$-module structure given by $\prepend(s) = 0\cdot s$ and $\append(s) = s\cdot 1$.
This group also carries a grading by the length of the string so that $\allstrings = \bigoplus_{n=0}^{\infty}\allstrings^n$, where $\allstrings^n$ is the subgroup of weight $n$ strings.
\end{define}

We shall then write $\hyperlog{\conj A}{\conj B}{w}$ for $w \in \allstrings$ to mean the nautical polylogarithm obtained by exchanging $0$ with $A$ and $1$ with $B$.
For instance, $\hyperlog{\conj A}{\conj B}{01101}$ corresponds to $\hyperlog{\conj A}{\conj B}{ABBAB}$. 
The $\appendring$-module structure on $\allstrings$ corresponds to that on $\kontgraphs$:

\begin{lem}
Suppose that $\Gamma \in \kontgraphs$ and let $w \in \allstrings$ be a word where 0 and 1 both appear at least once.
Suppose $\int\Gamma = \hyperlog{\conj A}{\conj B}{w}$. 
Then $\int\prepend(\Gamma) = \hyperlog{\conj A}{\conj B}{\prepend(w)}$ and $\int\append(\Gamma) = \hyperlog{\conj A}{\conj B}{\append(w)}$.
\label{lem:can_append}
\end{lem}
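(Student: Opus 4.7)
The plan is to split the integration of $\prepend(\Gamma)$ into an inner integration over the internal nodes of $\Gamma$ followed by an outer integration over the new apex vertex introduced by $\prepend$, then to finish by invoking \Cref{lem:wedgeint}. Concretely, in $\prepend(\Gamma)$ the old external $B$ of $\Gamma$ becomes a new internal node, which I will call $x$; a new external still labeled $B$ appears at the bottom; and the added wedge has apex $x$ with edges $x\to A$ and $x\to B$. Writing $w_{B\mapsto x}$ for the word obtained from $w$ by replacing every $B$ with $x$ and using the hypothesis $\int\Gamma=\hyperlog{\conj A}{\conj B}{w}$ to perform the inner integration first, one obtains
\[
\int\prepend(\Gamma) = \int_x \hyperlog{\conj A}{\conj x}{w_{B\mapsto x}}\wedge \alpha_{x\to A}\wedge\alpha_{x\to B}.
\]

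Now \Cref{lem:wedgeint} applied with $u=A$ and $v=B$ produces three terms. The first is $\hyperlog{\conj A}{\conj B}{A\cdot w}$ (since $(w_{B\mapsto x})|_{x\mapsto B}=w$), which is exactly $\hyperlog{\conj A}{\conj B}{\prepend(w)}$. The second is $\hyperlog{\conj A}{\conj A}{B\cdot(w_{B\mapsto x})|_{x\mapsto A}}$, whose integration path has length zero, so it vanishes. The third is the correction at infinity, which vanishes because $x$ appears in $w_{B\mapsto x}$; this is precisely where the hypothesis that $B$ occurs in $w$ is essential.

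The $\append$ case proceeds by the same scheme, with the old external $A$ now playing the role of the apex $x$. The inner integration gives $\hyperlog{\conj x}{\conj B}{w_{A\mapsto x}}$, in which $x$ is the \emph{starting} point of the path rather than the endpoint. I would use the path-reversal identity \eqref{eqn:path_reverse} to exchange the endpoints, apply \Cref{lem:wedgeint} to the reversed integrand (the correction vanishes by the hypothesis that $A$ occurs in $w$), and then apply path reversal once more. Tracking the signs from the two reversals together with the minus sign in the wedgeint formula shows everything cancels cleanly to yield $\int\append(\Gamma)=\hyperlog{\conj A}{\conj B}{\append(w)}$.

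The main subtlety is that the stated hypothesis of \Cref{lem:wedgeint} asks that either $y\neq u,v$ or $w$ is empty, whereas in both applications here the starting point of the path coincides with one of the wedge targets ($A=u$ for $\prepend$, $B=v$ for $\append$). However, revisiting the proof of \Cref{lem:wedgeint} shows that its intermediate primitive $\alpha$ remains nautical in this setting: the offending polylogarithms have words of the form $v\cdot w$ and $u\cdot w$ with $w$ non-empty and not ending at the offending label, thanks to the nautical property of the original $\hyperlog{\conj A}{\conj B}{w}$. Hence the residue computation underlying \Cref{lem:wedgeint} carries through unchanged, and the only observable effect of the coincidence is the collapse of one of the boundary terms to a length-zero integral, as noted above.
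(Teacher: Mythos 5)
Your proposal is correct and follows essentially the same route as the paper: integrate out the internal nodes of $\Gamma$ first (Fubini), so that the old external node becomes the endpoint of a nautical polylogarithm multiplying the new wedge, then apply \Cref{lem:wedgeint}, with the zero-length-path term and the correction at infinity vanishing exactly as you describe, and with a double path reversal handling the $\append$ case. Your remark about the stated hypothesis of \Cref{lem:wedgeint} is well taken --- the paper applies that lemma in the same configuration (path starting at $\conj A$ with $u=A$) without comment, and your resolution, that the primitive stays nautical because $w$ is non-empty and does not end at the offending label, is exactly what the proof of \Cref{lem:wedgeint} actually uses.
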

\begin{proof}
For $\prepend$, label the nodes as follows:
\[
\prepend\left(
\begin{tikzpicture}[baseline=(current bounding box.center)]
\begin{scope}[decoration={
    markings,
    mark=at position 1 with {\arrow{>}}}
    ] 
    \node (A) at (0,0) {$A$};
    \node (B) at (1,0) {$y$};
    \draw[postaction=decorate,dashed] (0,1) to (A);
    \draw[postaction=decorate,dashed] (1,1) to (B);
\end{scope}
\end{tikzpicture}
\right) = 
\begin{tikzpicture}[baseline=(current bounding box.center)]
\begin{scope}[decoration={
    markings,
    mark=at position 1 with {\arrow{>}}},
    ] 
    \node (A) at (0,-1) {$A$};
    \node (B) at (1,0) {$y$};
    \node (C) at (1,-1) {$B$};
    \draw[postaction=decorate,dashed] (0,0.66) to (A);
    \draw[postaction=decorate,dashed] (1,0.66) to (B);
    \draw[postaction=decorate] (B) to (A);
    \draw[postaction=decorate] (B) to (C);
\end{scope}
\end{tikzpicture}
\]
Let $X$ be the set of all nodes above $A$ and $y$.
By definition, $\graphform_{\prepend(\Gamma)} = \graphform_\Gamma\wedge\alpha_{y\to A}\wedge\alpha_{y\to B}$, so
\begin{align*}
    \int\prepend(\Gamma) = \int_{y}\alpha_{y\to A}\wedge\alpha_{y\to B}\int_{X}\graphform_\Gamma = \int_{y}\hyperlog{\conj A}{\conj y}{w}\alpha_{y\to A}\wedge\alpha_{y\to B},
\end{align*}
since we can integrate out nodes in any order.
Then by the wedge integral \Cref{eqn:wedgeint}:
\begin{align*}
    \int_{y} \hyperlog{\conj A}{\conj y}{w} \alpha_{y\to A}\wedge\alpha_{y\to B} = \hyperlog{\conj A}{\conj y}{0\cdot w} = \hyperlog{\conj A}{\conj y}{\prepend(w)}
\end{align*}
where the $\hyperlog{\conj A}{\infty}{(A-B)\cdot w}$ term is not needed since $0$ is a letter of $w$ and the $\hyperlog{\conj A}{\conj A}{1\cdot w}$ term vanishes due to the zero length path between $\conj A$ and $\conj A$.

The proof of the formula for $\int \append(\Gamma)$ follows by a similar argument with a path reversal.
\end{proof}

Then, the shuffle algebra structure in $\allstrings$ corresponds to the $*$ operation in $\kontgraphs$:

\begin{lem}
    Suppose $\Gamma_1, \Gamma_2 \in \kontgraphs$ are such that $\int\Gamma_1 = \hyperlog{\conj A}{\conj B}{w_1}$ and $\int\Gamma_2 = \hyperlog{\conj A}{\conj B}{w_2}$ with $w_1, w_2 \in \allstrings$.
    Then $\int\Gamma_1 * \Gamma_2 = \hyperlog{\conj A}{\conj B}{w_1\shuffle w_2}$
    with $w_1\shuffle w_2 \in \allstrings$.
    \label{lem:can_shuffle}
\end{lem}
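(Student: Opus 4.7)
The plan is to factor the pushforward defining $\int \Gamma_1 * \Gamma_2$ into two independent pushforwards (one for each subgraph) and then convert the resulting product of nautical polylogarithms into a single polylogarithm via the shuffle identity \Cref{eqn:shuffle_prod}.

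First, I would unpack the definition of the join: by construction, the internal vertex sets $P_1$ and $P_2$ of $\Gamma_1$ and $\Gamma_2$ are disjoint, the external vertices $A, B$ are shared, and $\graphform_{\Gamma_1 * \Gamma_2} = \graphform_{\Gamma_1} \wedge \graphform_{\Gamma_2}$ with the edge ordering chosen so that no sign appears. Since no edge of $\Gamma_1$ touches a vertex of $\Gamma_2$ other than $A$ or $B$ and vice versa, the form $\graphform_{\Gamma_1}$ depends only on the coordinates of $P_1 \cup \{A, B\}$ and $\graphform_{\Gamma_2}$ only on $P_2 \cup \{A, B\}$. Therefore the fibrewise integration over $P_1 \disunion P_2$ in the sense of \Cref{def:pushforward-notation} factors as
\begin{align*}
    \int \Gamma_1 * \Gamma_2 \;=\; \int_{P_1 \disunion P_2} \graphform_{\Gamma_1} \wedge \graphform_{\Gamma_2} \;=\; \left(\int_{P_1} \graphform_{\Gamma_1}\right) \cdot \left(\int_{P_2} \graphform_{\Gamma_2}\right) \;=\; \int\Gamma_1 \cdot \int\Gamma_2,
\end{align*}
using Fubini applied stratum by stratum over $\closure\conf_{\{A,B\}}$ (which is legitimate since both integrals land in $\closure\nauts_{\{A,B\}}$ by hypothesis, which is absolutely convergent).

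Next, I substitute the hypothesized values $\int\Gamma_1 = \hyperlog{\conj A}{\conj B}{w_1}$ and $\int\Gamma_2 = \hyperlog{\conj A}{\conj B}{w_2}$. Both nautical polylogarithms are defined with paths in the lower half-plane from $\conj A$ to $\conj B$, so we may take a common path $\gamma$ in the fibre and apply the shuffle product identity \Cref{eqn:shuffle_prod} to get
\begin{align*}
    \hyperlog{\conj A}{\conj B}{w_1} \cdot \hyperlog{\conj A}{\conj B}{w_2} \;=\; \int_\gamma w_1 \cdot \int_\gamma w_2 \;=\; \int_\gamma (w_1 \shuffle w_2) \;=\; \hyperlog{\conj A}{\conj B}{w_1 \shuffle w_2}.
\end{align*}

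Finally, I would verify that the result lies in $\closure\nauts_{\{A,B\}}$ and that the shuffle $w_1 \shuffle w_2$ is a valid element of $\allstrings$: every term in $w_1 \shuffle w_2$ begins with one of the leading letters of $w_1$ or $w_2$ (both $0$) and ends with one of the trailing letters (both $1$), so each summand starts with $0$ and ends with $1$ as required, and the resulting nautical polylogarithm is convergent by the same endpoint conditions already used for $w_1$ and $w_2$. The only substantive step is the Fubini factorization in the first paragraph; once that is granted, the shuffle identity does all the work. I do not expect any serious obstacle, since everything reduces to the disjointness of the integration variables and the shuffle relation that is already recorded in the preliminaries.
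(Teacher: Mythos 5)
Your proposal is correct and follows essentially the same route as the paper: factor the pushforward via Fubini using the disjointness of the internal vertex sets, apply the shuffle identity \Cref{eqn:shuffle_prod}, and check that the shuffle of two words starting with $0$ and ending with $1$ again lies in $\allstrings$. The only difference is that you spell out the justifications (disjoint supports of the forms, convergence, the common path in the fibre) in more detail than the paper does.
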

\begin{proof}
We have $\graphform_{\Gamma_1 *\Gamma_2} = \graphform_{\Gamma_1}\wedge\graphform_{\Gamma_2}$
and so with $X_1, X_2$ denoting the internal nodes of $\Gamma_1, \Gamma_2$ respectively:
\begin{align*}
    \int\Gamma_1*\Gamma_2 &= \int_{X_1\disunion X_2} \graphform_{\Gamma_1}\wedge\graphform_{\Gamma_2} \\&= \int_{X_1} \graphform_{\Gamma_1}\int_{X_2} \graphform_{\Gamma_2} \\
    &= \hyperlog{\conj A}{\conj B}{w_1}\hyperlog{\conj A}{\conj B}{w_2} \\
    &= \hyperlog{\conj A}{\conj B}{w_1\shuffle w_2}
\end{align*}
by Fubini's Theorem, and the shuffle product for iterated integrals \Cref{eqn:shuffle_prod}.
Since since $w_1$ and $w_2$ do not start with $1$ or end with $0$, their shuffles cannot either, so $w_1\shuffle w_2 \in \allstrings$.
\end{proof}

Using \Cref{lem:can_append} and \Cref{lem:can_shuffle}, we can thus compute $\int\Gamma$ for any graph $\Gamma \in \graphof(\syntaxtrees)$ by transforming the integrand of the nautical polylogarithm in order of the operations in $t$, starting with the formula $\int \graphof(e) = \hyperlog{\conj A}{\conj B}{01}$ derived in \Cref{ex:integrating_graph_with_nauts}.
Note that by starting with the single rung ladder $\graphof(e)$ we ensure that the condition in \Cref{lem:can_append}, that 0 and 1 are both in $w$, is always satisfied.

After all integration steps have been performed, we are left with a polylogarithm $L(\conj A|w|\conj B)$ where $w$ is a string of $A$s and $B$s.
Setting $A=0$ and $B=1$ and using the relation
\begin{align*}
    \hyperlog{0}{1}{0^{n_d-1}1\cdots 10^{n_1-1}1} = (-1)^d\frac{\zeta(n_1,\dots,n_d)}{(2\pi i)^{n_1+\dots+n_d}} \in \mzvs
\end{align*}
due to Le--Murakami \cite{mzvrepn1995} and Kontsevich, we obtain an expression for $c_\Gamma$ as a linear combination of MZVs associated to binary strings.

\begin{ex}\label{ex:integrating_graph_with_nauts}
    Consider the element $\prepend(e) \in \syntaxtrees$ which has weight three.
    The associated graph $\Gamma = \graphof(\prepend(e))$ is:
    \[
\begin{tikzpicture}[baseline=(current bounding box.center)]
\begin{scope}[decoration={
    markings,
    mark=at position 1 with {\arrow{>}}}
    ] 
    \node (A) at (0,0) {$A$};
    \node (B) at (1,0) {$B$};
    \node (C) at (1,1) {$\bullet$};
    \node (D) at (0,2) {$\bullet$};
    \node (E) at (1,2) {$\bullet$};
    \draw[postaction=decorate] (D) to[bend left] (E);
    \draw[postaction=decorate] (E) to[bend left] (D);
    \draw[postaction=decorate] (E) to (C);
    \draw[postaction=decorate] (D) to (A);
    \draw[postaction=decorate] (C) to (A);
    \draw[postaction=decorate] (C) to (B);
\end{scope}
\end{tikzpicture}
\]
Using \Cref{lem:can_append}, we have
\begin{align*}
    \int\Gamma = \int \graphof(\prepend(e)) = \int \prepend(\graphof(e)) = \hyperlog{\conj A}{\conj B}{\prepend(01)} = \hyperlog{\conj A}{\conj B}{001}.
\end{align*}
Identifying $A$ and $B$ with the the points $0, 1 \in \reals$
we obtain $c_{\Gamma} = -\frac{\zeta(3)}{(2\pi i)^3}$.
\end{ex}

Since the only object that changes during the integration steps above is the integrand of the polylogarithm,
we can encode this procedure using the following morphisms:

\begin{define}
   We denote by $\stringof : \syntaxtrees \to \allstrings$ the unique $\appendring$-module and ring homomorphism such that $\stringof(e) = 01$.
\end{define}

\begin{define}
    We denote by $L^{01} : \allstrings \to \mzvs$ the $\ints$-linear map defined on generators by
    \begin{align*}
    0^{n_d-1}1\cdots 10^{n_1-1}1 \mapsto (-1)^d\frac{\zeta(n_1,\dots,n_d)}{(2\pi i)^{n_1+\dots+n_d}} \in \mzvs
\end{align*}
\end{define}

Note that $L^{01}$ is a ring homomorphism by \Cref{eqn:shuffle_prod},
however it is $\emph{not}$ an $\appendring$-module homomorphism.
For example, it can be verified that $\zeta(1,2) = \zeta(3)$ so
$L^{01}(011) = -L^{01}(001)$. However, $L^{01}(\prepend(011)) \neq -L^{01}(\prepend(001))$ since
$L^{01}(0011) = (2\pi i)^{-4}\zeta(1,3) \neq (2\pi i)^{-4}\zeta(4) = -L^{01}(0001)$.

We also recall the map $\coeffof : \kontgraphs \to \comps$ where $\Gamma \mapsto c_\Gamma$, given by evaluating $\int\Gamma$ at $(A,B) = (0,1)$.
We can now state the main result of this section, a method for obtaining the coefficients $c_\Gamma$ associated to the graphs in $\graphof(\syntaxtrees)$:
\begin{prop}\label{prop:integration_procedure}
    For all $t \in \syntaxtrees$, we have $\coeffof(\graphof(t)) = L^{01}(\stringof(t))$.
    All morphisms respect the grading of each space.
    In other words, the diagram
\[
\begin{tikzcd}
\syntaxtrees^n \arrow[r, "\graphof"] \arrow[d, "\stringof"] & \kontgraphs^n \arrow[d, "\coeffof"] \\
\allstrings^n \arrow[r, "L^{01}"]     & \mzvswithhalf^n
\end{tikzcd}
\]
    commutes for all $n$.
\end{prop}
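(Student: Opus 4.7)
The strategy is to prove by structural induction on $t \in \syntaxtrees$ the strengthened statement that
\[
\int \graphof(t) = \hyperlog{\conj A}{\conj B}{\stringof(t)} \in \closure\nauts_{\{A,B\}},
\]
while carrying along the side invariant that $\stringof(t)$ contains both of the letters $0$ and $1$. Once this refined equality of polylogarithms is established, restricting to the stratum $\conf_{\emptyset,\{A,B\}}$ (a single point on which we may normalize $(A,B)=(0,1)$) and applying the identification $\calv_{\emptyset,\{A,B\}} = \mzvs$ from \Cref{ex:calv_ab_is_mzvs} converts the statement into $\coeffof(\graphof(t)) = L^{01}(\stringof(t))$, which is the desired commutativity of the diagram.

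The base case $t = e$ is given directly by \Cref{ex:hgraph_single_rung}, which computes $\int \graphof(e) = \hyperlog{\conj A}{\conj B}{AB}$, matching $\hyperlog{\conj A}{\conj B}{\stringof(e)} = \hyperlog{\conj A}{\conj B}{01}$ under the convention that the letters $0$ and $1$ in $\allstrings$ correspond to $A$ and $B$; the string $01$ evidently contains both letters. For the inductive step there are three cases, each of which consists of combining the inductive hypothesis with one of the lemmas from the previous section. If $t = \prepend(t')$, then since $\graphof$ is an $\appendring$-module homomorphism one has $\graphof(\prepend(t')) = \prepend(\graphof(t'))$, and \Cref{lem:can_append} (whose hypothesis is activated by the letter invariant) yields $\int \prepend(\graphof(t')) = \hyperlog{\conj A}{\conj B}{\prepend(\stringof(t'))} = \hyperlog{\conj A}{\conj B}{\stringof(\prepend(t'))}$. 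The case $t = \append(t')$ is identical. If $t = t_1 * t_2$, then the ring homomorphism property of $\graphof$ together with \Cref{lem:can_shuffle} gives $\int \graphof(t_1 * t_2) = \hyperlog{\conj A}{\conj B}{\stringof(t_1)\shuffle\stringof(t_2)} = \hyperlog{\conj A}{\conj B}{\stringof(t_1 * t_2)}$. In each case the invariant is trivially preserved: prepending $0$, appending $1$, or shuffling two strings containing both letters cannot remove a letter.

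For the grading claim it suffices to check each map on generators. The generator $e \in \syntaxtrees^2$ maps to the single-rung ladder $\graphof(e) \in \kontgraphs^2$ (two internal nodes) and to the length-two string $01 \in \allstrings^2$; the operations $\prepend$ and $\append$ each add exactly one internal vertex to the graph and one letter to the string, matching the weight-$1$ increment in $\syntaxtrees$; and joins $*$ are additive on both internal node counts and string lengths (via the shuffle length formula). The map $L^{01}$ is graded by its very definition, and $\coeffof$ is graded by the theorem of Banks--Panzer--Pym. The only subtle point in the whole argument is the need to carry the ``both letters present'' invariant inductively, since otherwise one cannot invoke \Cref{lem:can_append}; beyond that bookkeeping, all of the analytic content has already been packaged into \Cref{lem:can_append,lem:can_shuffle}.
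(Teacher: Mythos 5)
Your proposal is correct and follows essentially the same route as the paper: induction on the structure of $t$ with base case the single-rung ladder, using \Cref{lem:can_append} for $\prepend,\append$ and \Cref{lem:can_shuffle} for $*$, then restricting to the stratum $\conf_{\emptyset,\{A,B\}}$ and applying \Cref{ex:calv_ab_is_mzvs}. Your explicit tracking of the ``both letters present'' invariant is exactly the point the paper makes when it notes that starting from the ladder guarantees the hypotheses of \Cref{lem:can_append}.
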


\begin{proof}
    That all morphisms between these spaces respect the grading are either established results or easy verifications.

    The main result is clear by induction on the weight of the graph: the base case is the single rung ladder in \Cref{ex:hgraph_single_rung} and the induction proceeds by considering the operations $\prepend, \append, *$ in $\syntaxtrees$ to build larger graphs and using \Cref{lem:can_append} and \Cref{lem:can_shuffle} to see that the larger graph is integrated correctly.
    Again, by starting with a ladder, we ensure that the hypotheses of \Cref{lem:can_append} are always satisfied.
\end{proof}

\begin{ex}
Consider $\Gamma = \graphof(\prepend(e)) \in \kontgraphs$, the same as in \Cref{ex:integrating_graph_with_nauts}.
Through the $\stringof$ morphism we can directly compute 
\begin{align*}
c_\Gamma = L^{01}(\stringof(\prepend(e))) = L^{01}(\prepend(01)) = L^{01}(001) = -\frac{\zeta(3)}{(2\pi i)^3}
\end{align*}
matching the previous result.\end{ex}

\begin{ex}
Consider $t = \append(e)*\prepend(e) \in \syntaxtrees$. Its corresponding weight six graph $\Gamma = \graphof(t)$ is the one shown in \Cref{ex:append_and_join_graph}:
\[
\Gamma = 
\begin{tikzpicture}[baseline=(current bounding box.center)]
\begin{scope}[decoration={
    markings,
    mark=at position 1 with {\arrow{>}}}
    ] 
    \node (A) at (0,0) {$A$};
    \node (B) at (1,0) {$B$};
    \node (C) at (-1,1) {$\bullet$};
    \node (C2) at (2,1) {$\bullet$};
    \node (D) at (-1,2) {$\bullet$};
    \node (E) at (0,2) {$\bullet$};
    \node (F) at (1,2) {$\bullet$};
    \node (G) at (2,2) {$\bullet$};
    \draw[postaction=decorate] (D) to[bend left] (E);
    \draw[postaction=decorate] (E) to[bend left] (D);
    \draw[postaction=decorate] (F) to[bend left] (G);
    \draw[postaction=decorate] (G) to[bend left] (F);
    \draw[postaction=decorate] (D) to (C);
    \draw[postaction=decorate] (E) to (B);
    \draw[postaction=decorate] (G) to (C2);
    \draw[postaction=decorate] (C2) to (A);
    \draw[postaction=decorate] (C2) to (B);
    \draw[postaction=decorate] (F) to (A);
    \draw[postaction=decorate] (C) to (A);
    \draw[postaction=decorate] (C) to (B);
\end{scope}
\end{tikzpicture}
\]
We then have
\begin{align*}
    c_\Gamma &= L^{01}(\stringof(\append(e)*\prepend(e))) \\
    &= L^{01}(\append(01)\shuffle\prepend(01)) \\
    &= L^{01}( 011 \shuffle 001) \\
    &= L^{01}(9\cdot 000111 + 5\cdot 001011 + 2\cdot 001101 + 2\cdot 010011 + 010101 + 011001) \\
    &= -\frac{1}{(2\pi i)^6}(9\zeta(1,1,4) + 5\zeta(1,2,3) + 2 \zeta(2,1,3) + 2\zeta(1,3,2) + \zeta(2,2,2)+\zeta(3,1,2)) 
\end{align*}

\end{ex}

\section{Generating MZVs}
We now prove our main result that $\mzvswithhalf^n \tensor \qs$ is spanned by graphs of weight $n$ and $\mzvswithhalf$ is spanned by all graphs.
Note that since $L^{01}(\allstrings^n) = \mzvs^n$ and by the commutativity of the diagram in \Cref{prop:integration_procedure}, it suffices to show that $\stringof(\syntaxtrees^n\tensor\qs) = \allstrings^n\tensor\qs$.
A key step to this proof is establishing a Lyndon word polynomial basis for $\allstrings$, the topic of the next section.

\subsection{Lyndon words and generation of shuffle algebras}

In this section we study $\allstrings$ as just a shuffle algebra of binary strings,
disregarding the $\appendring$-module structure.
\begin{define}
    A \emph{Lyndon word} is a binary string $w$ such that any prefix of $w$ is lexicographically less than the corresponding suffix,
    that is, for any factorization $w = x\cdot y$ we have $x < y$.
    We adopt the usual lexicographical ordering induced by $1 > 0$.
\end{define}

By a theorem of Radford \cite{radford79}, the Lyndon words are a set of polynomial generators for the shuffle algebra of \emph{all} binary strings over $\qs$. However it is unclear how this result translates to the smaller algebra $\allstrings$.
The main result of this section is \Cref{prop:full_lyndon_decomposition}, that Lyndon words in $\allstrings$ form a set of polynomial generators for $\allstrings$ using coefficients in $\qs$.
The following sequence of Lemmas build to this Proposition.
\begin{lem}
    \label{lem:len2_lyndons_have01prop}
    All Lyndon words of length two or more are in $\allstrings$, i.e., start with 0 and end with 1.
\end{lem}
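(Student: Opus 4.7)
The plan is to argue directly from the definition by examining a single well-chosen factorization. Let $w = w_1 \cdots w_n$ be a Lyndon word with $n \geq 2$, and consider the factorization $w = x \cdot y$ where $x = w_1 \cdots w_{n-1}$ and $y = w_n$ is the final letter. The Lyndon hypothesis forces $x < y$ in the lexicographic order induced by $1 > 0$.

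First I would show that $w_1 = 0$. Suppose instead $w_1 = 1$, so that $x$ begins with $1$. The letter $y$ is either $0$ or $1$. If $y = 0$, the first-letter comparison gives $x > y$ immediately. If $y = 1$, then either $|x| = 1$, in which case $x = y = 1$, or $|x| \geq 2$, in which case $y$ is a proper prefix of $x$ and so $y < x$. Every case contradicts $x < y$, so $w_1 = 0$.

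Next, using $w_1 = 0$, I would run an analogous argument on the same factorization to show $w_n = 1$. Suppose $w_n = 0$, so that $y = 0$, and note that $x$ inherits the leading letter $0$ from $w$. Then if $|x| = 1$ we get $x = 0 = y$, while if $|x| \geq 2$ the letter $y = 0$ is a proper prefix of $x$, giving $y < x$. Both possibilities contradict the Lyndon condition, so $w_n = 1$.

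The argument is entirely elementary and there is no real obstacle; the only bookkeeping point is the standard convention that in lexicographic order a proper prefix is strictly smaller than its extensions, which is exactly what makes the two ``proper prefix'' subcases yield the contradiction.
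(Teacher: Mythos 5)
Your proof is correct and follows essentially the same strategy as the paper's: a direct case analysis exhibiting a factorization $w = x\cdot y$ that violates the Lyndon condition $x < y$, relying on the standard convention that a proper prefix is strictly smaller than its extensions. The only cosmetic difference is that you use the last-letter factorization throughout, whereas the paper splits off the maximal run of leading $1$s to handle the first letter; both work equally well.
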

\begin{proof}
If $w$ starts with 1, then let $w = x\cdot y$ where $x$ is the longest prefix of all 1s.
If $w$ is all ones, the decomposition $w = 1\dots1\cdot 1$ shows it is not Lyndon as long as $|w| \geq 2$.
So suppose $y$ is non-empty and starts with a 0. Then $x > y$ and so $w$ is not Lyndon.

If $w$ ends with 0, then $w = x\cdot 0$ has $x \geq 0$ for any $x$ of length 1 or more,
so it is not Lyndon.
\end{proof}

\begin{cor}
    \label{lem:1_is_maximal_lyndon}
    1 is the maximal Lyndon word.
\end{cor}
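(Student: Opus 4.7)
The plan is to deduce this corollary directly from \Cref{lem:len2_lyndons_have01prop} together with the definition of the lexicographic order induced by $1 > 0$. Concretely, I will separate Lyndon words by length and check that $1$ dominates each class.

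First I would handle the length-one Lyndon words. Both $0$ and $1$ are Lyndon (a single letter has no nontrivial factorization $w = x \cdot y$ with $x, y$ both nonempty, so the defining condition holds vacuously), and $1 > 0$ is immediate from the chosen ordering. Next, for any Lyndon word $w$ of length at least two, \Cref{lem:len2_lyndons_have01prop} tells me that $w \in \allstrings$, so in particular $w$ begins with the letter $0$. Comparing $w$ with $1$ in lex order, the very first character already gives $w < 1$, since $0 < 1$. Combining the two cases, every Lyndon word $w \neq 1$ satisfies $w < 1$, so $1$ is maximal.

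The only potential subtlety is being careful that a single letter is indeed a Lyndon word and that the lexicographic comparison between strings of different lengths is decided at the first differing position (here the first position already differs). Neither point is an obstacle; this is essentially a one-line consequence of the preceding lemma. Thus the corollary follows immediately with no calculation.
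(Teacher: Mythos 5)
Your argument is correct and matches what the paper intends: the corollary is stated without proof immediately after \Cref{lem:len2_lyndons_have01prop}, precisely because any Lyndon word of length at least two starts with $0$ and is therefore lexicographically below $1$, while the only other candidates are the single letters $0 < 1$. Your attention to the vacuous Lyndon condition for single letters and to the first-character comparison is exactly the (implicit) reasoning the paper relies on.
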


\begin{define}
    A \emph{Lyndon decomposition} of a binary string $w$
    is a decomposition of $w$ into Lyndon words in decreasing lexicographic order: $w = l_1^{k_1}\cdots l_n^{k_n}$ where $l_i$ are Lyndon words such that $l_i > l_{i+1}$
    for all $i$.
\end{define}

By the Chen--Fox--Lyndon theorem, a Lyndon decomposition of a binary string $w$ always exists and is unique \cite{Chen_Fox_Lyndon, Shi58}.
Then by \cite{Melancon_Reutenauer_1989} with $w$ decomposed as $l_1^{k_1}\cdots l_n^{k_n}$ we have
\begin{align}
\label{eqn:shuffle_of_lyndon_decomp}
    \frac{1}{k_1!\cdots k_n!} l_1^{\shuffle k_1} \shuffle \dots\shuffle l_n^{\shuffle k_n} = w + \sum c_i w_i
\end{align}
where $w_i$ are words of strictly lower lexicographic order and $k_i, c_i \in \nats$.
\begin{lem}
    \label{lem:lyndon_len2_decomp}
    The Lyndon decomposition of a word in $\allstrings$ only uses words in $\allstrings$.
\end{lem}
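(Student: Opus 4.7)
The plan is to take the Lyndon decomposition $w = l_1^{k_1}\cdots l_n^{k_n}$ of a word $w \in \allstrings$ (which exists and is unique by Chen--Fox--Lyndon) and show that each Lyndon factor $l_i$ has length at least $2$. Once this is established, \Cref{lem:len2_lyndons_have01prop} immediately gives $l_i \in \allstrings$ for every $i$.

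Since the only Lyndon words of length $1$ are $0$ and $1$, it suffices to rule out the possibilities $l_i = 0$ and $l_i = 1$ for every index $i$. The key observation is that the factors are arranged in strictly decreasing lexicographic order: $l_1 > l_2 > \cdots > l_n$.

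For the case $l_i = 1$: by \Cref{lem:1_is_maximal_lyndon}, $1$ is the maximal Lyndon word, so if $1$ appears in the decomposition it must appear as $l_1$. But $w$ starts with $0$ (because $w \in \allstrings$), and since $l_1$ is a prefix of $w$, the first letter of $l_1$ is $0$, so $l_1 \neq 1$. Symmetrically, for $l_i = 0$: since $0$ is the minimum letter, it is strictly smaller than every binary string of positive length that starts with $1$, so if $0$ appears in the decomposition it can only be $l_n$. But $w$ ends in $1$, and $l_n$ is a suffix of $w$, so $l_n$ ends in $1$ and $l_n \neq 0$. Combining, no $l_i$ is a single-letter Lyndon word, so each $l_i$ has length $\geq 2$ and lies in $\allstrings$ by \Cref{lem:len2_lyndons_have01prop}.

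There is no real obstacle here; the result is essentially a bookkeeping consequence of the extremality of the letters $0$ and $1$ among Lyndon words together with the defining condition of $\allstrings$. The only thing to be mildly careful about is that the decreasing order of the $l_i$ is strict, so the letters $0$ and $1$ can each occur at most once (as $l_n$ and $l_1$ respectively), and each of those two possibilities is then excluded by the endpoint conditions on $w$.
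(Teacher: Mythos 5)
Your proof is correct and follows essentially the same route as the paper's: both arguments use the extremality of the letters $0$ and $1$ among Lyndon words (so that $0$ could only occur as the last factor and $1$ only as the first) and then derive a contradiction with the fact that $w$ starts with $0$ and ends with $1$, before invoking \Cref{lem:len2_lyndons_have01prop}. No issues.
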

\begin{proof}
    We need only exclude the words 0 and 1 in the decomposition by \Cref{lem:len2_lyndons_have01prop}.
    Suppose that $w = s_1\cdot0^k\cdot s_2$ in its Lyndon decomposition.
    Since $0$ has the lowest lexicographic ordering, $w$ must end with $0^k$,
    contradicting that $w\in\allstrings$.
    Similarly, suppose that $w = s_1\cdot1^{k}\cdot s_2$ in its Lyndon decomposition.
    By \Cref{lem:1_is_maximal_lyndon}, 1 has the highest lexicographic ordering in the Lyndon words
    so $w$ must start with 1, again contradicting that $w\in\allstrings$.
\end{proof}

\begin{prop}
\label{prop:full_lyndon_decomposition}
    Any word $w \in \allstrings$ can expressed as a $\qs$-linear combination of shuffles of Lyndon words in $\allstrings$.
\end{prop}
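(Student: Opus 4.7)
The plan is induction on lexicographic order among the (finitely many) words of a fixed length $n$ in $\allstrings$. The base case is the lexicographically smallest such word, $0^{n-1}1$, which is itself Lyndon and hence trivially a shuffle product of Lyndon words in $\allstrings$.

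For the inductive step, take $w \in \allstrings$ of length $n$ and write its Chen--Fox--Lyndon decomposition $w = l_1^{k_1} \cdots l_m^{k_m}$ with $l_1 > \cdots > l_m$. By \Cref{lem:lyndon_len2_decomp}, each Lyndon factor $l_i$ already lies in $\allstrings$. Rearranging the identity in \Cref{eqn:shuffle_of_lyndon_decomp} then gives
\begin{align*}
    w = \frac{1}{k_1! \cdots k_m!}\, l_1^{\shuffle k_1} \shuffle \cdots \shuffle l_m^{\shuffle k_m} \;-\; \sum_i c_i\, w_i,
\end{align*}
where the $w_i$ are words of length $n$ that are strictly smaller than $w$ in lexicographic order.

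The only subtlety is to verify that each error term $w_i$ itself belongs to $\allstrings$, so that the induction hypothesis applies. This follows from the closure property that the shuffle of elements of $\allstrings$ is a sum of elements of $\allstrings$: the first (respectively last) letter of any interleaving of two words must come from the first (respectively last) letter of one of the factors, so interleaving words that all start with $0$ and end with $1$ only produces words of the same form. Since each $w_i$ appears in the shuffle expansion of Lyndon words lying in $\allstrings$, it too lies in $\allstrings$, and the inductive hypothesis expresses $w_i$ as a $\qs$-linear combination of shuffles of Lyndon words in $\allstrings$. Substituting back gives the desired expression for $w$.

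The main (really only) obstacle is this closure observation: without it the induction would leak out of $\allstrings$ and one could only recover Radford's theorem for the full shuffle algebra of binary strings. Once closure is in hand, the argument is a direct packaging of Chen--Fox--Lyndon, \Cref{lem:lyndon_len2_decomp}, and the Lyndon shuffle expansion.
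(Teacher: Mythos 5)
Your proof is correct and follows essentially the same route as the paper: Chen--Fox--Lyndon decomposition, \Cref{lem:lyndon_len2_decomp} to keep the Lyndon factors in $\allstrings$, the shuffle expansion \Cref{eqn:shuffle_of_lyndon_decomp}, and downward induction on lexicographic order among words of fixed length, with the closure of $\allstrings$ under $\shuffle$ (plus positivity of the $c_i$) guaranteeing the error terms stay in $\allstrings$. Your explicit first/last-letter justification of that closure is a slightly more detailed version of what the paper simply asserts.
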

\begin{proof}
We proceed by induction on the lexicographic order.
Let $w = l_1^{k_1}\cdots l_n^{k_n}$ be the Lyndon decomposition of $w$.
Then using \Cref{eqn:shuffle_of_lyndon_decomp} we have 
\begin{align*}
    w =  \frac{1}{k_1!\cdots k_n!} l_1^{\shuffle k_1} \shuffle \dots\shuffle l_n^{\shuffle k_n} - \sum c_i w_i
\end{align*}
where $k_i, c_i \in \nats$ and $w_i$ are binary strings such that $w_i < w$.
By \Cref{lem:lyndon_len2_decomp} we have $l_k \in \allstrings$, and it is clear that $\sum_{i} k_i|l_i| = |w|$, and $|w_i| = |w|$.
Also, since $\allstrings$ is closed under $\shuffle$ and $c_i > 0$, we must have that $w_i \in \allstrings$ for all $i$.

We can subsequently rewrite each $w_i$ that is not a Lyndon word using \Cref{eqn:shuffle_of_lyndon_decomp} the same way, that is, write it in terms of shuffles of Lyndon words in $\allstrings$ and words of lower lexicographic order in $\allstrings$.
This process will terminate in finitely many steps since the lexicographic order is always decreasing and there are finitely many strings of length $|w|$. 
\end{proof}

We end this section with an additional lemma about Lyndon words that will be useful for the main proof in the next section:

\begin{lem}
    \label{lem:lyndons_startwith00}
    All Lyndon words of length three or more start with 00 or end with 11
\end{lem}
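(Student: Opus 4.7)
The plan is to argue by contradiction. Suppose $w$ is Lyndon with $|w|\ge 3$ and that $w$ neither starts with $00$ nor ends with $11$. By \Cref{lem:len2_lyndons_have01prop}, $w$ starts with $0$ and ends with $1$, so these hypotheses force $w$ to start with the prefix $01$ and end with the suffix $01$.

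I would first dispatch the base case $|w|=3$. Here the first two characters being $01$ and the last two being $01$ would force the middle character to equal both $1$ and $0$, which is impossible. So I may assume $|w|\ge 4$.

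For $|w|\ge 4$, I would exhibit a factorization violating the Lyndon property. Take the factorization $w=x\cdot y$ where $y=01$ consists of the last two characters, so that $x$ is the prefix of length $|w|-2\ge 2$. Since $w$ begins with $01$ and $|x|\ge 2$, the string $x$ also begins with $01$. If $|x|=2$ then $x=01=y$, and if $|x|>2$ then $y=01$ is a proper prefix of $x$, which under the standard lexicographic order on binary strings gives $y<x$. In either case $x\not< y$, contradicting the defining property of a Lyndon word.

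The argument is essentially routine once one notices that a length-$\ge 3$ Lyndon word not starting with $00$ and not ending with $11$ must have its length-$2$ prefix and length-$2$ suffix both equal to $01$; the only nontrivial step is the lexicographic comparison, where one must use the convention that a proper prefix is strictly smaller than the full string. I do not anticipate any real obstacle.
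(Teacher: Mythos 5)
Your proof is correct and follows essentially the same route as the paper: both reduce to the case where $w$ begins and ends with $01$ and then observe that the factorization splitting off the final $01$ violates the Lyndon condition, since the prefix $01\cdot s$ is $\geq$ the suffix $01$. The only cosmetic difference is that you handle length three by an overlap contradiction while the paper simply enumerates $001$ and $011$.
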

\begin{proof}
    For length three, the only Lyndon words are 001 and 011.

    So consider some Lyndon word $w = b_0\cdot b_1 \cdot s \cdot b_2 \cdot b_3$, where $b_i \in \{0,1\}$
    and $s$ is a (possibly empty) binary string.
    \Cref{lem:len2_lyndons_have01prop} says $b_0 = 0$ and $b_3 = 1$,
    so we have $w = 0\cdot b_1 \cdot s \cdot b_2 \cdot 1$.
    Then if $b_1 = 0$ or $b_2 = 1$ we have a prefix or suffix of 00 or 11 respectively, consistent with the Lemma,
    so the only remaining case is $w = 01\cdot s \cdot 01$.
    But then the prefix $01\cdot s$ of $w$ is greater than or equal to the suffix $01$,
    so it is not a Lyndon word.
\end{proof}

Note that there are no similar statements involving prefixes and suffixes 000 and 111 or longer, since 001011 is a Lyndon word.

\subsection{Spanning MZVs}

\begin{theorem}
\label{thm:span_weight_n_mzvs_over_q}
    Integrals of Kontsevich graphs of weight $n$, with the log propagator, span the $\qs$-vector space of normalized MZVs of weight $n$, that is $\coeffof(\kontgraphs^n\tensor\qs) = \mzvswithhalf^n\tensor\qs$.
\end{theorem}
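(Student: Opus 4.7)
The plan is to combine the commutative diagram of \Cref{prop:integration_procedure} with the Lyndon-decomposition result \Cref{prop:full_lyndon_decomposition}. The containment $\coeffof(\kontgraphs^n\tensor\qs) \subseteq \mzvswithhalf^n\tensor\qs$ is already known from Banks--Panzer--Pym, so the work is the reverse inclusion. Since $L^{01}(\allstrings^n)=\mzvs^n$ and the commutative diagram gives $\coeffof\circ\graphof = L^{01}\circ\stringof$, the reverse inclusion will follow from two steps: (i) the combinatorial identity $\stringof(\syntaxtrees^n\tensor\qs) = \allstrings^n\tensor\qs$, which will yield $\mzvs^n\tensor\qs\subseteq\coeffof(\kontgraphs^n\tensor\qs)$; and (ii) the inclusion $\frac{1}{2}\mzvs^{n-1}\tensor\qs\subseteq\coeffof(\kontgraphs^n\tensor\qs)$, which I will obtain using the single-wedge factor $-\frac{1}{2}$ of \Cref{ex:single_wedge}.

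Step (i) I would prove by strong induction on $n$. The base case $n\leq 2$ is immediate since $\allstrings^2 = \ints\cdot 01 = \stringof(\ints\cdot e)$. For $n\geq 3$, fix $w\in\allstrings^n$ and apply \Cref{prop:full_lyndon_decomposition} to write $w$ as a $\qs$-linear combination of shuffle monomials $l_1^{\shuffle k_1}\shuffle\cdots\shuffle l_j^{\shuffle k_j}$ with each $l_i$ a Lyndon word of $\allstrings$. Any such monomial with $j\geq 2$ or some $k_i\geq 2$ has every factor of length strictly less than $n$, so by the inductive hypothesis each factor lies in $\stringof(\syntaxtrees\tensor\qs)$; since $\stringof$ intertwines the join $*$ with the shuffle $\shuffle$, the monomial itself lies in $\stringof(\syntaxtrees^n\tensor\qs)$. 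The remaining case is a single Lyndon word $l$ of length $n$. Here \Cref{lem:lyndons_startwith00} guarantees that $l$ starts with $00$ or ends with $11$. In the first case $l=\prepend(l')$ for some $l'\in\allstrings^{n-1}$ (the initial $0$ of $l'$ comes from the second $0$ of $l$, and the terminal $1$ is inherited from $l$); in the second case $l=\append(l')$ analogously. By the inductive hypothesis at weight $n-1$ we have $l'\in\stringof(\syntaxtrees^{n-1}\tensor\qs)$, and applying $\prepend$ or $\append$ inside $\syntaxtrees$ (using that $\stringof$ is $\appendring$-equivariant) then realizes $l$ inside $\stringof(\syntaxtrees^n\tensor\qs)$.

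For step (ii), observe that $\mzvswithhalf^n\tensor\qs = (\mzvs^n+\mzvs^{n-1})\tensor\qs$ because $\frac{1}{2}$ is invertible in $\qs$, so after step (i) at weight $n$ only the summand $\mzvs^{n-1}\tensor\qs$ remains. Given $\alpha\in\mzvs^{n-1}\tensor\qs$, step (i) applied at weight $n-1$ produces $\Gamma'\in\kontgraphs^{n-1}\tensor\qs$ with $c_{\Gamma'}=\alpha$. Form the weight-$n$ combination $\Gamma'*W$, where $W$ is the single wedge. By multiplicativity of $c$ under joining and \Cref{ex:single_wedge}, $c_{\Gamma'*W}=-\frac{1}{2}\alpha$, so $\alpha = -2\,c_{\Gamma'*W}\in\coeffof(\kontgraphs^n\tensor\qs)$, completing the argument.

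The principal obstacle is step (i). Radford's theorem supplies Lyndon words as polynomial generators for the shuffle algebra of \emph{all} binary strings over $\qs$, but our target is the proper subalgebra $\allstrings$, and each generator must be buildable from $e$ using only $\prepend$, $\append$, and $*$. \Cref{lem:len2_lyndons_have01prop}, \Cref{lem:lyndon_len2_decomp}, and especially the $00/11$ dichotomy of \Cref{lem:lyndons_startwith00} are precisely what keep the Lyndon decomposition stable inside $\allstrings$ and provide the recursive hook $l\mapsto l'$ that shortens a length-$n$ Lyndon word to a length-$(n-1)$ element of $\allstrings$. Without that dichotomy there would be no mechanism for reducing a Lyndon word in $\allstrings$ of arbitrary length using only the two prefixing/suffixing operators available.
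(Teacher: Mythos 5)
Your proposal is correct and follows essentially the same route as the paper: the same strong induction using \Cref{prop:full_lyndon_decomposition} to reduce to Lyndon words, the same use of the $00/11$ dichotomy of \Cref{lem:lyndons_startwith00} to peel off a $\prepend$ or $\append$ from a maximal-length Lyndon word, and the same single-wedge join to capture the $\frac{1}{2}\mzvs^{n-1}$ summand. The only difference is organizational (you split off the case of a single length-$n$ Lyndon factor, while the paper treats all maximal-length Lyndon factors in the decomposition uniformly), which does not change the substance of the argument.
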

\begin{proof}
We shall write $\allstrings_\qs = \allstrings\tensor \qs$ and similarly for other $\ints$-modules in this section.

Let $\nautstrings = \stringof(\syntaxtrees) \subseteq \allstrings$.
By construction, $\nautstrings$ is closed under $\prepend,\append$ and $\shuffle$,
and contains the string $01 = \stringof(e)$ by \Cref{ex:hgraph_single_rung}.
We prove by induction that $\allstrings_\qs^n = \nautstrings_\qs^n$. We start at $n=2$, where $\allstrings_\qs^2$ is generated by $01 = \stringof(e)$.

Then suppose that $\allstrings_\qs^k = \nautstrings_\qs^k$ for all $k \leq n$.
We already have that $\nautstrings_\qs^{n+1} \subseteq \allstrings_\qs^{n+1}$ by construction, so for the reverse inclusion suppose we have some word $w \in \allstrings_\qs^{n+1}$.
By \Cref{prop:full_lyndon_decomposition}, $w$ can be
written as a $\qs$-linear combination of shuffles of Lyndon words of length $n+1$ or less in $\allstrings_\qs$:
\begin{align*}
    w = \sum_{i=1}^N a_i\prod^\shuffle_{1\leq k\leq n_i} l_{ik}
\end{align*}
By induction, all $l_{ik}$ of length $n$ or less are in $\nautstrings_\qs$.
By \Cref{lem:lyndons_startwith00} all $l_{ik}$ of length $n+1$
either start with 00 or end with 11.
If $l_{ik} = 00s$, then $0s \in \allstrings_\qs^n = \nautstrings_\qs^n$
so $l_{ik} = \prepend(0s) \in \nautstrings_\qs^{n+1}$.
Similarly, if $l_{ik} = s11$ then $l_{ik} = \append(s1) \in \nautstrings_\qs^{n+1}$.
So $w$ is expressible in terms of a $\qs$-linear shuffle product of words in $\nautstrings_\qs$, so $w \in \nautstrings_\qs$, and hence $w\in\nautstrings_\qs^{n+1}$ completing the induction.

Then using \Cref{prop:integration_procedure} and the fact that $L^{01}$ is surjective onto $\mzvs$, we have
\begin{align*}
    \mzvs_\qs^n = L^{01}(\allstrings^n_\qs) = L^{01}(\nautstrings^n_\qs) = L^{01}(\stringof(\syntaxtrees_\qs^n)) = \coeffof(\graphof(\syntaxtrees_\qs^n)) \subseteq \coeffof( \kontgraphs_\qs^n),
\end{align*}
so $\mzvs_\qs^n$ is generated by $\coeffof(\kontgraphs^n_\qs)$.

For the extended space $\mzvswithhalf_\qs^n = \mzvs_\qs^n + \frac{1}{2}\mzvs_\qs^{n-1}$,
we are left with generating $\frac{1}{2}\mzvs_\qs^{n-1}$ using graphs of weight $n$.
For each $z \in \mzvs_\qs^{n-1}$ the established inclusion $\mzvs_\qs^n \subseteq \coeffof( \kontgraphs_\qs^n)$ implies that there exists $\Gamma \in \kontgraphs^{n-1}_\qs$ such that $\coeffof(\Gamma) = z$.
Let $\Gamma_w$ be the single wedge graph (\Cref{ex:single_wedge}) which integrates to $-\frac{1}{2}$.
Then the join of the graphs $\Gamma*\Gamma_w$ is a graph of weight $n$ and we have $\coeffof( \Gamma*\Gamma_w) = -\frac{1}{2}z \in \frac{1}{2}\mzvs^{n-1}_\qs$.
It follows that $\mzvswithhalf_\qs^n = \coeffof(\kontgraphs_\qs^n)$.
\end{proof}

If we drop the weight restriction, we span all MZVs as a $\ints$-module, by observing that $\qs \subset \coeffof(\kontgraphs)$:

\begin{theorem}
    Integrals of Kontsevich graphs with the log propagator span the $\ints$-module of all MZVs, i.e., $\coeffof(\kontgraphs) = \mzvswithhalf$.
\end{theorem}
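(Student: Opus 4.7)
The plan is to bootstrap \Cref{thm:intro_generate_mzvswithhalf_tensor_q} (the $\qs$-span version) to the desired $\ints$-span statement. The key new input is $\qs \subset \coeffof(\kontgraphs)$, i.e., that every rational is an $\ints$-linear combination of graph coefficients. Combined with the multiplicativity $c_{\Gamma_1 * \Gamma_2} = c_{\Gamma_1} c_{\Gamma_2}$ (immediate from $\graphform_{\Gamma_1 * \Gamma_2} = \graphform_{\Gamma_1} \wedge \graphform_{\Gamma_2}$ and Fubini), this forces $\coeffof(\kontgraphs)$ to be closed under multiplication by $\qs$, so its $\ints$-span and $\qs$-span coincide. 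The main obstacle is precisely this first input: while the subsequent bookkeeping is routine, exhibiting a graph integral with $p$-adic valuation exactly $-1$ for each prime $p$ is the non-formal step.

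For the main step, I would exhibit $1/p \in \coeffof(\kontgraphs)$ for each prime $p$. The prime $p=2$ is handled by \Cref{ex:single_wedge}: the single wedge has coefficient $-1/2$ and the empty weight-zero graph has coefficient $1$, so $\tfrac{1}{2}\ints \subset \coeffof(\kontgraphs)$. For odd $p$, consider the weight-$(p-1)$ tree $t_p := \prepend^{p-3}(e) \in \syntaxtrees$ (interpreting $\prepend^0 = \mathrm{id}$ when $p=3$). By \Cref{prop:integration_procedure},
\[
c_{\graphof(t_p)} = L^{01}(\stringof(t_p)) = L^{01}(0^{p-2}1) = -\frac{\zeta(p-1)}{(2\pi i)^{p-1}} = \frac{B_{p-1}}{2(p-1)!}.
\]
The von Staudt--Clausen theorem gives $v_p(B_{p-1}) = -1$, and since $p \nmid (p-1)!$, the coefficient has $p$-adic valuation exactly $-1$. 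Writing it as $a/(pm)$ with $\gcd(a,p) = \gcd(m,p) = 1$, the integer multiple $m \cdot \graphof(t_p) \in \kontgraphs$ has coefficient $a/p$; Bezout integers $s,t$ with $sa + tp = 1$ then yield $1/p = s(a/p) + t \in \coeffof(\kontgraphs)$. Using the join operation and the resulting ring structure, one bootstraps to $1/N \in \coeffof(\kontgraphs)$ for every positive integer $N$, and hence $\qs \subset \coeffof(\kontgraphs)$.

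To finish, take any $z \in \mzvswithhalf$. \Cref{thm:intro_generate_mzvswithhalf_tensor_q} gives $z = \sum_i q_i c_{\Gamma_i}$ with $q_i \in \qs$ and $\Gamma_i \in \kontgraphs$; expanding each $q_i = \sum_j n_{ij} c_{\Gamma'_{ij}}$ with $n_{ij} \in \ints$ via the previous paragraph and applying multiplicativity yields
\[
z = \sum_{i,j} n_{ij}\, c_{\Gamma'_{ij} * \Gamma_i} \in \coeffof(\kontgraphs),
\]
which combined with $\coeffof(\kontgraphs) \subseteq \mzvswithhalf$ from Banks--Panzer--Pym gives the desired equality $\coeffof(\kontgraphs) = \mzvswithhalf$.
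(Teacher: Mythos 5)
Your proposal is correct and follows essentially the same route as the paper: reduce to showing $\qs\subset\coeffof(\kontgraphs)$ using the subring structure coming from joins, produce $B_{p-1}$ up to an integer factor as the coefficient of the graph $\graphof(\prepend^{p-3}(e))$, and invoke von Staudt--Clausen. The only real difference is the final arithmetic step: the paper rearranges the von Staudt--Clausen identity $B_{p-1}+\sum_{(q-1)\mid(p-1)}\tfrac1q\in\ints$ and inducts over primes $q<p$ to isolate $\tfrac1p$, whereas you use only the consequence that the $p$-adic valuation of $B_{p-1}$ is $-1$ together with Bezout, which avoids the induction entirely; your explicit bootstrap from $\{\tfrac1p\}$ to all of $\qs$ via the ring structure is also slightly more careful than the paper's passing claim that $\{\tfrac1p\}_{p\text{ prime}}$ generates $\qs$ as a $\ints$-module, which as stated only yields rationals with squarefree denominator.
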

\begin{proof}
    Since $\coeffof( \kontgraphs) \subseteq \mzvswithhalf$ is a subring that generates $\mzvswithhalf$ over $\qs$ by \Cref{thm:span_weight_n_mzvs_over_q}, it suffices to show that $\qs \subset \coeffof(\kontgraphs)$.
    Then since $\{\frac{1}{p}\}_{p \text{ prime}}$ generates $\qs$ as a $\ints$-module, we need only show that $\frac{1}{p} \in \coeffof(\kontgraphs)$ for all primes $p$.
    For $p=2$, the single wedge graph (\Cref{ex:single_wedge}) integrates to $-\frac{1}{2}$.
    So let $p$ be an odd prime.
    We recall the formula for the Bernoulli numbers $B_{2k}$ for $k \geq 1$:
    \begin{align*}
        B_{2k} = -2(2k)! \frac{\zeta(2k)}{(2\pi i)^{2k}}.
    \end{align*}
    Let $\Gamma_k$ be the graph $\graphof(\prepend^{2k-2}(e)) \in \kontgraphs$,
    then $c_{\Gamma_k} = L^{01}(\stringof(\prepend^{2k-2}(e))) = L^{01}(0^{2k-1}1) = -\frac{\zeta(2k)}{(2\pi i)^{2k}}$,
    meaning $2(2k)!c_{\Gamma_k} = B_{2k}$,
    implying $B_{2k} \in \coeffof( \kontgraphs)$.

    Next suppose, inductively, that for all for primes $q < p$, we have $\frac{1}{q} \in \coeffof(\kontgraphs)$.
    By the Von Staudt--Clausen theorem, we have
    \begin{align*}
        B_{p-1} + \sum_{(q-1)|(p-1)}\frac{1}{q} &= m,
    \end{align*}
    where $q$ ranges over primes and $m \in \ints$.
    Note that $q=p$ is the largest prime occurring in the sum, so we can rewrite this equation as
    \begin{align*}
        \frac{1}{p} &= m - B_{p-1} - \sum_{\substack{(q-1)|(p-1)\\q < p}}\frac{1}{q}.
    \end{align*}
    We have $m \in \coeffof(\kontgraphs)$ (since $\frac{1}{2}\in\coeffof(\kontgraphs)$) and $B_{p-1}\in\coeffof(\kontgraphs)$ from above,
    so $\frac{1}{p}\in\coeffof(\kontgraphs)$.
\end{proof}

\subsection{Integer generation}

There are insufficiently many graphs in $\graphof(\syntaxtrees^n)$ to generate $\mzvs^n$ for all $n$ as a $\ints$-module, using known relations between MZVs.
We verified manually that $\coeffof( \graphof(\syntaxtrees^n)) = \mzvs^n$ holds for $n \leq 4$ but the case $n=5$ fails:
\begin{lem}
    Assuming the conjectured basis  $\{\zeta(2,3),\zeta(3,2)\}$ for $\mzvs^5\tensor\qs$ from \cite{hoffman97},
    we have $\coeffof(\graphof(\syntaxtrees^5)) \neq \mzvs^5$.
\end{lem}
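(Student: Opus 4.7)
By \Cref{prop:integration_procedure}, the statement reduces to exhibiting an element of $\mzvs^5$ outside the $\ints$-submodule $L^{01}(\stringof(\syntaxtrees^5)) \subseteq \mzvs^5$. First I would enumerate $\syntaxtrees^5$: the grading condition $2\#e + \#\prepend + \#\append = 5$ forces either $\#e = 1$ with three unary operations applied to $e$ (giving $2^3 = 8$ trees), or $\#e = 2$ with one $*$ and one unary operation placed either outside the join or on a single factor (giving four further trees modulo the commutativity of $*$), for twelve generators total.

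Next I would compute $\stringof$ on each generator. The eight pure-unary trees each collapse to one of the four binary strings $00001$, $00011$, $00111$, $01111$ (with $00011$ and $00111$ appearing with multiplicity three among the eight), while the four mixed trees yield integer combinations obtained from the shuffle products $01 \shuffle 001$ and $01 \shuffle 011$. Applying $L^{01}$ then expresses the twelve generators as integer combinations of the eight admissible normalized weight-$5$ MZVs $\tilde\zeta(w) := \zeta(w)/(2\pi i)^5$.

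Finally I would reduce each $\tilde\zeta(w)$ to the conjectural basis $\{\tilde\zeta(2,3), \tilde\zeta(3,2)\}$ using the MZV duality $\zeta(5) = \zeta(1,1,1,2)$, $\zeta(1,4) = \zeta(1,1,3)$ etc., the stuffle and shuffle identities for $\zeta(2)\zeta(3)$, and Euler's sum formula $\zeta(1,4) = 2\zeta(5) - \zeta(2)\zeta(3)$. This realizes the twelve generators as explicit vectors in $\qs^2$ whose coordinates have denominators dividing $5$, and a direct $\ints$-lattice computation will show that the lattice they span is properly contained in the lattice generated by the images of all eight admissible weight-$5$ MZVs (i.e.\ the image of $\mzvs^5$). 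I expect the concrete obstruction to be that $\tilde\zeta(3,2)$ itself, with coordinate $(0,1)$, fails to be an integer combination of the twelve generators, even though it manifestly lies in $\mzvs^5$.

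The main obstacle is the MZV reduction step: rational coefficients in the basis must be tracked carefully through the standard weight-$5$ identities, and the outputs cross-checked numerically against the real parts of the relevant MZVs. Once the twelve generators are pinned down in $\qs^2$, the comparison of $\ints$-lattices is routine linear algebra.
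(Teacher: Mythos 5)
Your plan follows the paper's proof essentially verbatim: enumerate the generators of $\syntaxtrees^5$ (the paper lists eight rather than twelve, since $\stringof(\prepend(\append(\cdot)))=\stringof(\append(\prepend(\cdot)))$, and MZV duality then halves the table again), express their images in the conjectured basis with denominators dividing $5$, and verify by an integer lattice computation (the paper uses a mod-$2$ rank argument on the resulting $4\times 2$ matrix) that neither $\zeta(2,3)$ nor $\zeta(3,2)$ is integrally generated --- your predicted obstruction is exactly the one the paper exhibits. One minor slip in your description: the mixed trees $\prepend(e*e)$ and $\append(e*e)$ map to $0\cdot(01\shuffle 01)$ and $(01\shuffle 01)\cdot 1$ rather than to shuffles of $01$ with $001$ or $011$, but this does not affect the computation or the conclusion.
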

\begin{proof}
We consider the following elements of $\syntaxtrees^5$ which form a generating set for $\stringof(\syntaxtrees^5)$, since $\stringof(\prepend(\append(\cdot))) = \stringof(\append(\prepend(\cdot)))$ and $*$ is commutative:
\begin{center}
\begin{tabular}{c|c|c}
    $t \in \syntaxtrees$ & $\stringof(t) \in \allstrings$ & $(2\pi i)^5 \coeffof( G(t))$ \\\hline
    $\prepend^3(e)$ & 00001                          & $-\zeta(5)$ \\
    $\prepend^2(\append(e))$ & 00011                 & $\zeta(1,4)$ \\
    $\prepend(e*e)$ & $2\cdot 00101 + 4 \cdot 00011$ & $2\zeta(2,3) + 4\zeta(1,4)$ \\
    $\prepend(e)*e$ & $01001 + 3\cdot 00101 + 6\cdot 00011$ & $\zeta(3,2) + 3\zeta(2,3) + 6\zeta(1,4)$ \\
    
    $\append^3(e)$ & 01111                           & $\zeta(1,1,1,2)$ \\
    $\prepend(\append^2(e))$ & 00111                 & $-\zeta(1,1,3)$ \\
    $\append(e*e)$  & $2\cdot 01011 + 4\cdot 00111$  & $-2\zeta(1,2,2) - 4\zeta(1,1,3)$ \\
    $\append(e)*e$  & $01101 + 3\cdot 01011 + 6\cdot 00111$ & $-\zeta(2,1,2) - 3\zeta(1,2,2) - 6\zeta(1,1,3)$ \\
\end{tabular}
\end{center}
We would like to know whether the third column spans $\mzvs^5$.
Note that MZV dualities (e.g., $\zeta(5) = \zeta(1,1,1,2)$ or $\zeta(1,2,2) = \zeta(2,3)$) makes the second half of the table redundant for this purpose.
We may write the first half of the table uniquely in the conjectured basis:
\begin{align*}
\begin{pmatrix}
    \zeta(5) \\
    \zeta(1,4) \\
    2\zeta(2,3) + 4\zeta(1,4) \\
    \zeta(3,2) + 3\zeta(2,3) + 6\zeta(1,4)
\end{pmatrix}
= 
\frac{1}{5}
\begin{pmatrix}
    4 & 6 \\
    -1 & 1 \\
    6 & 4 \\
    9 & 11 \\
\end{pmatrix}
\begin{pmatrix}
    \zeta(2,3) \\
    \zeta(3,2) \\
\end{pmatrix},
\end{align*}
and so after multiplying by 5, we see that we may only generate $\zeta(2,3)$ and $\zeta(3,2)$ over $\ints$ if the matrix on the right hand side has a left inverse over $\ints$.
However in $\mzvs^5 \tensor \intsmod{2}$, the matrix is
\begin{align*}
\begin{pmatrix}
    0 & 0 \\
    1 & 1 \\
    0 & 0 \\
    1 & 1 \\
\end{pmatrix}
\end{align*}
which has rank 1, so no left inverse exists. Hence $\mzvs^5$ is not generated as a $\ints$-module.
\end{proof}

\bibliographystyle{hyperamsalpha}
\bibliography{refs.bib} 

\end{document}